\DeclareSymbolFontAlphabet{\mathbb}{AMSb}
\DeclareSymbolFontAlphabet{\mathbbl}{bbold}
\DeclareSymbolFont{cyrletters}{OT2}{wncyr}{m}{n}
\DeclareMathSymbol{\Sha}{\mathalpha}{cyrletters}{"58}
\newcounter{defcounter}
\newtheorem*{claim}{Claim}
\newcommand{\LL}{\Lambda}
\newcommand{\TT}{\mathbb{T}}
\newcommand{\QQ}{\mathbb{Q}}
\newcommand{\FF}{\mathcal{F}}
\newcommand{\lra}{\longrightarrow}
\newcommand{\ZZ}{\mathbb{Z}}
\newcommand{\PP}{\mathcal{P}}
\newcommand{\Gal}{\textup{Gal}}
\newcommand{\KS}{\textbf{\textup{KS}}}
\newcommand{\ra}{\rightarrow}
\newcommand{\be}{\begin{equation}}
\newcommand{\ee}{\end{equation}}
\newcommand{\al}{\mathcal{L}}
\newcommand{\oo}{\mathcal{O}}
\newcommand{\PPP}{\frak{P}}
\newcommand{\mm}{\hbox{\frakfamily m}}
\newcommand{\FFc}{\mathcal{F}_{\textup{\lowercase{can}}}}
\newcommand{\BK}{\mathbf{BK}}
\newcommand{\BBK}{\mathbb{BK}}
\newcommand{\Gr}{\textup{Gr}}
\newcommand{\crs}{\frak{C}}
\newcommand{\res}{\mathrm{res}}
\newcommand{\Iw}{\mathrm{Iw}}
\newcommand{\cyc}{\mathrm{cyc}}
\numberwithin{equation}{section}
\newtheorem{thm}{Theorem}[section]
\newenvironment{define}{\par\medskip\noindent\refstepcounter{thm}
\bgroup{\hspace*{-0.15 cm}\bf{Definition}
\thethm.}\bgroup}{\egroup \egroup\par\medskip}
\newtheorem{prop}[thm]{Proposition}
\newtheorem{cor}[thm]{Corollary}
\newenvironment{rem}{\par\medskip\noindent\refstepcounter{thm}
\bgroup{\hspace*{-0.15 cm}\bf{Remark} \thethm.}\bgroup}{\egroup
\egroup\par\medskip} \parskip 2pt
\newenvironment{conj}{\par\medskip\noindent\refstepcounter{thm}
\bgroup{\hspace*{-0.15 cm}\bf{Conjecture}
\thethm.}\bgroup}{\egroup \egroup\par\medskip}
\newenvironment{example}{\par\medskip\noindent\refstepcounter{thm}
\bgroup{\hspace*{-0.15 cm}\bf{Example}
\thethm.}\bgroup}{\egroup \egroup\par\medskip}
\newcounter{Athm}[section]\setcounter{Athm}{1}
\renewcommand{\theAthm} {\arabic{Athm}}
\long\def\symbolfootnote[#1]#2{\begingroup%
\def\thefootnote{\fnsymbol{footnote}}\footnote[#1]{#2}\endgroup}
\begin{document}

\title[Perrin-Riou's conjecture]{B\lowercase{eilinson}-K\lowercase{ato} \lowercase{and} B\lowercase{eilinson}-F\lowercase{lach elements}, C\lowercase{oleman}-R\lowercase{ubin}-S\lowercase{tark classes}, H\lowercase{eegner} \lowercase{points} \lowercase{and} \lowercase{the} {P}\lowercase{errin}-{R}\lowercase{iou} {C}\lowercase{onjecture} }

\author{K\^az\i m B\"uy\"ukboduk}

\address{K\^az\i m B\"uy\"ukboduk\newline UCD School of Mathematics and Statistics\\ University College Dublin\\ Ireland
\newline Ko\c{c} University, Mathematics  \\
Rumeli Feneri Yolu, 34450 \\ 
Istanbul, Turkey}
\email{kazim@math.harvard.edu}

\keywords{Abelian Varieties, Modular Forms, Iwasawa Theory, Birch and Swinnerton-Dyer Conjecture}

\maketitle
\begin{abstract}
Our first goal in this note is to explain that a weak form of Perrin-Riou's conjecture on the non-triviality of Beilinson-Kato classes follows as an easy consequence of the Iwasawa main conjectures, and deduce its refined versions in the supersingular case from this fact and a variety of Gross-Zagier formulae. 

Our second goal is to set up a conceptual framework in the context of $\LL$-adic Kolyvagin systems to treat analogues of Perrin-Riou's conjectures for higher  motives of higher rank. We apply this general discussion in order to establish a link between Heegner points on a general class of CM abelian varieties and the (conjectural) Coleman-Rubin-Stark elements we introduce here. This is a higher dimensional version of Rubin's results on rational points on CM elliptic curves.
\end{abstract}
\section{Summary of Contents and Background}
\label{sec:intro}
We have three goals in this article. The first is to record a rather simple (but somehow overlooked) proof of Perrin-Riou's conjecture  (under very mild hypotheses) on the non-vanishing of the $p$-adic Beilinson-Kato class associated to an elliptic curve $E_{/\QQ}$, when $E$ has ordinary (i.e., good ordinary or multiplicative) or supersingular reduction at $p$. This generalizes (a fragment of) the forthcoming work of Bertolini and Darmon for a good ordinary prime $p$ and of Venerucci for a \emph{split} multiplicative prime $p$. In the supersingular case, we also explain how to deduce an explicit formula for a point of infinite order on $E(\QQ)$ in terms of the special values of the two $p$-adic $L$-functions (attached to two $p$-stabilizations of the associated eigenform). We remark that a similar formula was readily proved by Kurihara and Pollack assuming the truth of the $p$-adic Birch and Swinnerton-Dyer conjecture; our argument here is based on Kobayashi's $p$-adic Gross-Zagier formula (which essentially settles the $p$-adic Birch and Swinnerton-Dyer conjecture in this set up).

As a side benefit (which was our initial motivation to release the first portion of this note), we remark that these results (in the case of multiplicative reduction) render our previous work~\cite{kbbmtt} on the Mazur-Tate-Teitelbaum conjecture fully self-contained and in some sense, they simplify the proof of the main results therein: The results here allow us to by-pass the need to appeal to a two-variable exceptional zero formulea, as considered in \cite{venerucciinventiones}.

In the second part of this article (Section~\ref{subsec:KS}), we first recast this approach relying on the theory of $\LL$-adic Kolyvagin systems. We explain how this yields a proof of an extension of Perrin-Riou conjecture concerning the non-vanishing of the $p$-distinguished twists of Beilinson-Flach elements (Corollary~\ref{cor:PRBFelement}). 

In the third and final portion of this note, we establish a precise link between Heegner points on a general class of CM abelian varieties and the (conjectural) Coleman-Rubin-Stark elements we introduce here associated to these CM abelian varieties (c.f. Theorem \ref{thm:maincolemanRSheegnerPR} in the main text). This is a higher dimensional version of Rubin's results on rational points on CM elliptic curves (where he compares elliptic units to Heegner points on CM elliptic curves).

\subsubsection*{\textbf{{Part I. Perrin-Riou's conjecture for Beilinson-Kato elements.}}}Let $E$ be an elliptic curve defined over $\QQ$ and let $N$ denote its conductor. Fix a prime $p>3$ and let $S$ denote the set consisting of all rational primes dividing $Np$ and the archimedean place. In this set up, Kato \cite{ka1} has constructed an Euler system $\textbf{c}^{\textup{BK}}=\{c_F^{\textup{BK}}\}$ where $F$ runs through abelian extensions of $\QQ$, $c_F^{\textup{BK}}\in H^1(F,T_p(E))$ is unramified away from the primes dividing $Np$ and $T_p(E)$ is the $p$-adic Tate module of $E$. Kato's explicit reciprocity laws show that the class $c_\QQ^{\textup{BK}}\in H^1(\QQ,T_p(E))$ is non-crystalline at $p$ (and in particular, non-zero) precisely when $L(E/\QQ,1)\neq 0$, where $L(E/\QQ,s)$ is the Hasse-Weil $L$-function of $E$. Perrin-Riou in \cite[\S 3.3.2]{pr93grenoble} predicts the following assertion to hold true. Let $\textup{res}_p: H^1(G_{\QQ,S},T)\ra H^1(\QQ_p,T)$ denote the restriction map.
\begin{conj}
\label{conj:PR}
The class $\textup{res}_p\left(c_\QQ^{\textup{BK}}\right) \in H^1(\QQ_p,T_p(E))$ is non-torsion if and only if $L(E/\QQ,s)$ has at most a  simple zero at $s=1$. 
\end{conj}
This is the conjecture (and its extensions in other settings) we address in the current article. We will explain below how to deduce Conjecture~\ref{conj:PR} as an easy corollary of the work of Kato, Skinner-Urban and Wan on the main conjectures of Iwasawa theory of elliptic curves. 
\begin{thm}[Kato, Skinner-Urban, Wan]
\label{thm:main}
Suppose that $E$ is an elliptic curve such that the residual representation 
$$\overline{\rho}_E:\,G_{\QQ,S}\lra \textup{Aut}(E[p])$$
is surjective. Then the ``if" part of  Perrin-Riou's Conjecture~\ref{conj:PR} holds true in the following cases:
\begin{itemize}
\item[(a)] $E$ has good ordinary reduction at $p$.
\item[(b)] $E$ has good supersingular reduction at $p$ and $N$ is square-free.
\item[(c)] $E$ has multiplicative reduction at $p$ and there exists a prime $\ell \mid\mid N$ such that there $\overline{\rho}_E$ is ramified at $\ell$.
\end{itemize}
\end{thm}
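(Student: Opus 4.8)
The plan is to derive each case from Kato's explicit reciprocity law, which relates $\mathrm{res}_p(c_\QQ^{\mathrm{BK}})$ (or rather the image of the local Iwasawa-theoretic class under the Perrin-Riou big exponential/regulator map) to the $p$-adic $L$-function $L_p(E)$, together with the $p$-adic Gross--Zagier/Rubin-style formulae that express the derivative of $L_p(E)$ at the near-central point in terms of a global point. The key observation is that ``$L(E/\QQ,s)$ has at most a simple zero at $s=1$'' means either $L(E/\QQ,1)\neq 0$ or $\mathrm{ord}_{s=1}L(E/\QQ,s)=1$. In the first case Kato's reciprocity law already shows $\mathrm{res}_p(c_\QQ^{\mathrm{BK}})$ is non-crystalline, hence non-torsion, so the entire content is in the analytic rank one case. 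There, by Gross--Zagier and Kolyvagin, $E(\QQ)$ has rank one and $\Sha(E/\QQ)$ is finite, so there is a point $P$ of infinite order; the aim is to show its image in $H^1(\QQ_p,T_p(E))$, or equivalently the relevant $p$-adic height / value of a $p$-adic $L$-function, is non-zero and matches $\mathrm{res}_p(c_\QQ^{\mathrm{BK}})$ up to a nonzero scalar.

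In case (a), good ordinary reduction, I would invoke the $p$-adic Gross--Zagier formula of Perrin-Riou/Kato--Kurihara--Pollack (and its refinements): the derivative $L_p'(E,1)$ computes, up to explicit nonzero fudge factors, the $p$-adic height of a generator of $E(\QQ)$, which is nonzero because the $p$-adic height pairing is nondegenerate on a rank one curve (a consequence of the surjectivity of $\overline{\rho}_E$ via the finiteness of $\Sha$ and Schneider's results, or can be circumvented using the leading-term of the Mazur--Tate--Teitelbaum $p$-adic $L$-function). Feeding this through Kato's reciprocity law (which identifies $\logprcyc$ of the Iwasawa class $\mathbf{c}^{\mathrm{BK}}_{\infty,p}$ with $L_p(E)$), nonvanishing of $L_p'(E,1)$ forces $\mathrm{res}_p(c_\QQ^{\mathrm{BK}})$ to be non-torsion. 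For case (c), multiplicative reduction, the argument is the same but one must account for the exceptional zero: if $E$ has split multiplicative reduction the $p$-adic $L$-function acquires an extra trivial zero, so one works with the Mazur--Tate--Teitelbaum leading term involving the $\mathcal{L}$-invariant; the hypothesis that $\overline{\rho}_E$ ramifies at some $\ell\mid\mid N$ is exactly what is needed to run the Euler system / Kolyvagin-system descent cleanly and to know the relevant Selmer groups have the expected size, exactly as in \cite{kbbmtt}.

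Case (b), supersingular reduction with $N$ squarefree, is the one I expect to be the main obstacle, and here I would use the two $p$-adic $L$-functions $L_p^{\pm}(E)$ attached to the two $p$-stabilizations, together with Kato's reciprocity law in the form of the Perrin-Riou regulator over the (signed) Iwasawa algebra, in the Kobayashi plus/minus formalism. The strategy: write $\mathrm{res}_p(c_\QQ^{\mathrm{BK}})$ in terms of a basis $\{\omega_\alpha,\omega_\beta\}$ of $\Dcris(T_p(E))$; the two components are (up to nonzero periods) the values $L_p^{\alpha}(E,1)$ and $L_p^{\beta}(E,1)$, and at least one is nonzero precisely when $L(E,s)$ has at most a simple zero, using the supersingular $p$-adic Gross--Zagier formula (Kobayashi, and Kurihara--Pollack for the explicit point) — but crucially our version is unconditional because we do not assume $p$-adic BSD; instead we read off non-triviality from the Euler/Kolyvagin system machinery of Section~\ref{subsec:KS} and the non-degeneracy of the cyclotomic $p$-adic height in the supersingular setting. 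The delicate points are (i) ensuring the ``leading term'' we extract is genuinely the first derivative and not forced to vanish to higher order by a sign obstruction in the $\pm$-theory, which is where squarefreeness of $N$ (hence a clean local picture at bad primes and a well-behaved plus/minus Coleman map) is used, and (ii) matching normalizations so that non-vanishing of the analytic object transfers to non-torsion-ness of the arithmetic class $\mathrm{res}_p(c_\QQ^{\mathrm{BK}})$ rather than merely its image under a possibly-degenerate map. Assembling these gives the explicit formula for a point of infinite order on $E(\QQ)$ claimed in the introduction, and with it case (b).
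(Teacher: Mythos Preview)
Your approach has a genuine gap and, in fact, inverts the logical flow of the paper. You want to deduce $\res_p(\BK_1)\neq 0$ from $L_p'(E,1)\neq 0$ via Kato's reciprocity law, and you propose obtaining $L_p'(E,1)\neq 0$ from the $p$-adic Gross--Zagier formula together with the non-degeneracy of the cyclotomic $p$-adic height pairing. But the non-degeneracy of the $p$-adic height pairing (even on a rank-one Mordell--Weil group) is \emph{not} known unconditionally; this is Schneider's conjecture, and it certainly does not follow from surjectivity of $\overline{\rho}_E$ or finiteness of $\Sha$. In the paper, non-degeneracy of (at least one of) the $p$-adic height pairings is established as a \emph{consequence} of Theorem~\ref{thm:main} (see Theorem~\ref{thm:mainsupersingularexplicit}(iii)), not as an input to it. So your argument is circular: you are assuming precisely the output of the theorem you are trying to prove.

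The paper's actual proof proceeds along entirely different lines and does not touch $p$-adic heights or $p$-adic Gross--Zagier formulae at all. The key input is the Iwasawa main conjecture (Conjecture~\ref{ref:conjKatomain}), proved in each case (a)--(c) by the combined work of Kato, Skinner--Urban, Kobayashi, Wan, and Skinner (this is where the auxiliary hypotheses on $N$ and on ramification of $\overline{\rho}_E$ at $\ell\mid\mid N$ actually enter). One first shows, using classical Gross--Zagier and Kolyvagin, that $H^1_{\FF_{\textup{str}}}(\QQ,T)=0$ and hence $H^1_{\FFc^*}(\QQ,T^*)$ is finite (Proposition~\ref{prop:strisfinite}). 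A control argument then shows $(\gamma-1)$ is coprime to $\textup{char}\,H^1_{\FF_\LL^*}(\QQ,\TT^*)^\vee$, and the main conjecture transfers this to coprimality with $\textup{char}\left(H^1(\QQ,\TT)/\LL\cdot\BBK_1\right)$. This forces $\BBK_1\notin(\gamma-1)H^1(\QQ,\TT)$, i.e.\ $\BK_1\neq 0$; injectivity of $\res_p$ on $H^1_f(\QQ,T)$ (again from $H^1_{\FF_{\textup{str}}}(\QQ,T)=0$) finishes the proof. The $p$-adic Gross--Zagier formulae only appear later, in the refined statements (Theorems~\ref{thm:introsupersingularPRenhanced} and \ref{thm:mainenhanced}), where they are used to upgrade non-vanishing to an explicit formula.
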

As per the ``only if'' direction, one may deduce the following as a rather straightforward consequence of the recent results due to Skinner, Skinner-Zhang and Venerucci in the case of $p$-ordinary reduction and due to Kobayashi and Wan in the case of $p$-supersingular reduction. We state it here for the sake of completeness.
\begin{thm}[Skinner, Skinner-Zhang, Venerucci]
\label{thm:mainonlyif}
In the situation of Theorem~\ref{thm:main}, the ``only if'' part of Perrin-Riou's conjecture holds true for all cases $(\textup{a})$, $(\textup{b})$ and $(\textup{c})$ if we further assume: 
\begin{itemize}
\item in the case of $(\textup{a})$, that $N$ is square free and either $E$ has non-split multiplicative reduction at one odd prime or split multiplicative reduction at two odd primes; 
\item in the case of $(\textup{c})$ that
\begin{itemize}
\item $p$ does not divide $\textup{ord}_p(\Delta_E)$ and when $E$ split-multiplicative reduction at $p$, the Galois representation $E[p]$ is not finite at $p$,
 \item for all primes $\ell \mid\mid N$ such that $\ell \equiv \pm 1 \mod p$, the prime $p$ does not divide $\textup{ord}_\ell(\Delta_E)$,
\item there exists at least two prime factors $\ell \mid\mid N$ such that $p$ does not divide $\textup{ord}_\ell(\Delta_E)$.
 \end{itemize}
\end{itemize}
\end{thm}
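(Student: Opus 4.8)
The plan is to deduce Theorem~\ref{thm:mainonlyif} from the now-available rank-one $p$-converse theorems, the bridge being that the non-triviality of $\textup{res}_p(c_\QQ^{\textup{BK}})$ forces the $p^\infty$-Selmer corank of $E$ to equal exactly one whenever $L(E/\QQ,1)=0$. First I would dispose of the easy subcase: if $\textup{res}_p(c_\QQ^{\textup{BK}})$ is non-torsion \emph{and} non-crystalline at $p$, then Kato's explicit reciprocity law recalled above gives $L(E/\QQ,1)\neq 0$, whence $\textup{ord}_{s=1}L(E/\QQ,s)=0\leq 1$ and there is nothing further to prove. So from now on one may assume that $\textup{res}_p(c_\QQ^{\textup{BK}})$ is crystalline at $p$, i.e. lies in $H^1_f(\QQ_p,V_p(E))$.

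In this crystalline subcase I would first identify the global Selmer class: since $c_\QQ^{\textup{BK}}$ is unramified outside $Np$ and Kato's classes meet the Bloch--Kato conditions at the primes dividing $N$, the crystalline hypothesis at $p$ shows $c_\QQ^{\textup{BK}}\in\textup{Sel}(\QQ,V_p(E))$; and because $\textup{res}_p(c_\QQ^{\textup{BK}})$ is non-torsion, $c_\QQ^{\textup{BK}}$ is nonzero (note $H^1(\QQ,T_p(E))$ is torsion-free, as $\overline{\rho}_E$ is surjective), so $\dim_{\QQ_p}\textup{Sel}(\QQ,V_p(E))\geq 1$. On the other hand, since the bottom class of Kato's rank-one Euler system is nonzero and $\overline{\rho}_E$ is surjective, the Euler-system divisibility --- in case (a) through Kato's original argument, in case (b) through Kobayashi's $\pm$-theory, and in case (c) through the $a_p=\pm 1$ cyclotomic Iwasawa theory, in each case combined with the Mazur--Rubin refinement of the Euler-system bound --- gives $\dim_{\QQ_p}\textup{Sel}(\QQ,V_p(E))\leq 1$. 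Hence $\dim_{\QQ_p}\textup{Sel}(\QQ,V_p(E))=1$, equivalently $\textup{corank}_{\ZZ_p}\textup{Sel}_{p^\infty}(E/\QQ)=1$.

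It then remains to feed this corank computation into the appropriate rank-one $p$-converse theorem. In case (a), the hypotheses that $N$ be square-free and that $E$ have non-split multiplicative reduction at one odd prime, or split multiplicative reduction at two odd primes, are exactly what is needed to supply an auxiliary imaginary quadratic field satisfying the Heegner hypothesis with the correct local root numbers, so that the $p$-converse theorem of Skinner, Skinner--Zhang and Venerucci applies and yields $\textup{ord}_{s=1}L(E/\QQ,s)=1$. In case (b) the same conclusion follows from the supersingular $p$-converse theorem of Kobayashi and Wan. In case (c) one combines Venerucci's argument in the split multiplicative case with the non-split $p$-converse theorem; the additional hypotheses that $p\nmid\textup{ord}_p(\Delta_E)$, that $p\nmid\textup{ord}_\ell(\Delta_E)$ for $\ell\mid\mid N$ with $\ell\equiv\pm 1\bmod p$, and that at least two primes $\ell\mid\mid N$ satisfy $p\nmid\textup{ord}_\ell(\Delta_E)$, are precisely the Tamagawa-type and ramification conditions under which those results are established. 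In every case $\textup{ord}_{s=1}L(E/\QQ,s)=1\leq 1$, which is the asserted ``only if'' direction.

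The main obstacle is not the introduction of new ideas but the bookkeeping: one must check that the hypotheses imposed here are at least as strong as those required by each of the cited converse theorems, and that the Euler-system corank bound is genuinely available in the multiplicative and supersingular settings in the exact form used above --- including the behaviour at the bad primes $\ell\mid N$, where the identification of $c_\QQ^{\textup{BK}}$ as a Selmer class and the comparison of the various Selmer structures must be made $p$-integrally. I expect the subtlest point to be case (c) with non-split reduction at $p$, where the interplay between the absence of an exceptional zero, the Tamagawa factors at the primes $\ell\mid\mid N$, and the integrality in the relevant cyclotomic main conjecture has to be tracked explicitly.
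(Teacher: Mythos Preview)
Your overall strategy matches the paper's: reduce to the crystalline case via Kato's reciprocity law, show the Bloch--Kato Selmer rank equals one, and then invoke the rank-one $p$-converse theorems of Skinner, Skinner--Zhang, Kobayashi--Wan and Venerucci, with the extra hypotheses serving precisely to put us in the range of those converse results.

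The one place where the paper proceeds differently (and more cleanly) is the upper bound $\dim_{\QQ_p}\textup{Sel}(\QQ,V_p(E))\leq 1$. You propose to split on the reduction type at $p$ and invoke, case by case, Kato's divisibility, Kobayashi's $\pm$-theory, and the $a_p=\pm1$ cyclotomic theory. None of this Iwasawa-theoretic apparatus is needed here. The paper argues uniformly at the finite level: from $\BK_1\neq 0$ one obtains via \cite[Corollary~5.2.13(i)]{mr02} that the dual canonical Selmer group $H^1_{\FFc^*}(\QQ,E[p^\infty])$ is finite; a short propagation argument with the Selmer structures $\FF_{\textup{str}}\subset\FFc$ then shows that the \emph{strict} Selmer group $H^1_{\FF_{\textup{str}}}(\QQ,T)$ vanishes. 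Hence $\textup{res}_p\colon H^1_f(\QQ,T)\to H^1_f(\QQ_p,T)$ is injective, and since the target is free of rank one, so is the source. The reduction type at $p$ thus plays no role whatsoever in the Selmer bound; it enters only at the very end, in selecting which converse theorem to cite --- exactly as you say. Your route would also work, but it imports more machinery than the argument requires and obscures the uniformity.
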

We remark that in the situation of (a), the hypotheses in Theorem~\ref{thm:mainonlyif}  may be slightly altered if we relied on the work of Zhang~\cite[Theorem~{1.3}]{zhangconversetokolyCambridge} on the converse of the Gross-Zagier-Kolyvagin theorem, in place of the work of Skinner. This on one hand would allow us to relax the condition on the conductor $N$, on the other hand would force us to introduce additional hypothesis (see Theorem 1.1 of loc.cit). 

In a variety of cases, we will be able refine Theorem~\ref{thm:main} and deduce that the square of the logarithm of a suitable Heegner point agrees with the logarithm of the Beilinson-Kato class $\BK_1$ up to an explicit non-zero algebraic factor and verify some of the hypothetical conclusions in \cite[\S3.3.3]{pr93grenoble}\footnote{Given Theorem~\ref{thm:main} and Kobayashi's $p$-adic Gross-Zagier formula, Perrin-Riou's article seems to contain an unconditional proof of Corollaries~\ref{cor:introsupersingularPRenhanced} and \ref{cor:introRubinsformula}. After an e-mail exchange with F. Castella, we came to believe that it would be useful to provide a somewhat detailed discussion on these results (which benefits from the recent developments in the theory of triangulordinary Selmer groups). We are grateful to F. Castella for his inquiry regarding this point.}. We record here the following three results in the cases when $E$ has good supersingular or bad non-split multiplicative reduction at $p$ (see Remark~\ref{rem:goodordcase} below for the developments concerning the good ordinary case). For a detailed discussion and proofs, we refer the reader to Section~\ref{sec:heegner}.
\begin{cor}[Perrin-Riou]
\label{cor:introsupersingularPRenhanced}
Suppose that $E$ has good supersingular reduction at $p$  and verifies the hypotheses of Theorem~\ref{thm:main}, as well as that its conductor $N$ is square-free. Then,
$$\log_E\left(\textup{res}_p(\BK_1)\right)=-(1-1/\alpha)(1-1/\beta)\cdot C(E)\cdot\log_E\left(\textup{res}_p(P)\right)^2$$
for a suitably chosen Heegner point $P \in E(\QQ)$, where $\log_E$ stands for the coordinate of the Bloch-Kato logarithm associated to $E$ with respect to a suitably normalized N\'eron differential on $E$ and $C(E)\in \QQ^\times$ is given in (\ref{eqn:GZ}).
\end{cor}
This is Theorem~\ref{thm:mainsupersingularexplicit} below. Its proof that we present here follows the general view point that the works \cite{benoisheights,  benoiscomplex, benoisbuyukboduk} offer on the theory of heights on triangulordinary Selmer groups, which in a certain (perhaps very subjective) sense of the word simplifies the discussion in  \cite[\S4]{prGZ} and \cite[\S3.3.3]{pr93grenoble}. 
\begin{rem}
\label{rem:goodordcase}
The treatment of the good ordinary case with the strategy outlined here requires a $p$-adic Gross-Zagier formula at critical slope. After recent developments (that we shall outline below), this formulae is now within our reach and it is the subject of our forthcoming joint work with R. Pollack and S. Sasaki~\cite{BPS-CriticalGZ}. 

We sketch here our strategy in \cite{BPS-CriticalGZ} to prove the desired Gross-Zagier formula for the critical slope $p$-adic $L$-function attached to the $p$-stabilization $f^{\beta}_E$, where $f_E$ is the eigenform attached to $E$ and $v_p(\beta)=1$. As the first step one considers a Coleman family ${\mathbf f}$ deforming $f$ over a suitable affinoid $A$, with constant slope $1$ and $U_p$-eigenvalue ${\bbbeta}$. Let $\mathscr{Z}$ denote the set of integers $k>2$ which are congruent to $2$ modulo $p-1$. For $k\in \mathscr{Z}$, we let ${\bf{f}}(k)$ denote the specialization of $\bf{f}$ with trivial wild character, so that ${\bf{f}}(k)$ is a $p$-stabilized $p$-old eigenform, which is the $p$-stabilization of a newform ${\bf{f}}(k)^{\circ}$ of level $N$ associated to the root $\bbbeta(k)$ of the Hecke polynomial of ${\bf{f}}(k)^{\circ}$ at $p$. Notice that ${\bf{f}}(k)$ is of non-critical slope. Moreover, $\bbbeta(k)$ has smaller $p$-adic valuation than the other root of the Hecke polynomial of ${\bf{f}}(k)^{\circ}$ at $p$ whenever $k\geq 4$.

The first step is to interpolate Heegner cycles (or rather the Generalized Heegner cycles of Bertolini-Darmon-Prasanna) in our Coleman family; this is one of the tasks we will be carrying out in our forthcoming article. We note that Jetchev-Loeffler-Zerbes have independently announced the construction of big Generalized Heegner cycles over a Coleman family (seemingly with a method different than ours). 

The second step is to prove a $p$-adic Gross-Zagier formulae for non-ordinary newforms ${\bf{f}}(k)^{\circ}$, corresponding to their stabilization with respect to the eigenvalue $\bbbeta(k)$. This has been recently announced by S. Kobayashi.

Thanks to the work of Benois, $p$-adic (cyclotomic) height pairings that appear in step two interpolate along the Coleman family $\bf{f}$, giving rise to an $A$-adic (cyclotomic) height pairings. Combining this fact with Steps 1 and 2 (along with the density of $\mathscr{Z}$ in $A$), one obtains an $A$-adic Gross-Zagier formula for the two variable $p$-adic $L$-function $L_p({\bf f}/K):=L_p({\bf f})L_p({\bf f}\otimes \chi_K)$ (where the two $p$-adic $L$-functions on the right are those constructed Pollack-Stevens and Bella\"{i}che) expressing its cyclotomic derivative of $L_p({\bf f}/K)$ as the $A$-adic height of the big Generalized Heegner cycle. On specializing to $f^\beta_E={\bf f}(2)$, we obtain the desired $p$-adic Gross-Zagier formula at critical slope.

Before we end this remark, we note that Bertolini and Darmon have readily announced a proof of Corollary~\ref{cor:introsupersingularPRenhanced} in the case of good ordinary reduction. Their methods are disjoint from what we have sketched within this remark (though see also Remark~\ref{rem:darmonvenerucci} below).
\end{rem}

Corollary~\ref{cor:introsupersingularPRenhanced} combined with the Gross-Zagier formula, Kobayashi's $p$-adic Gross-Zagier formula in this set up and Perrin-Riou's analysis in \cite[\S2.2.2]{pr93grenoble} yields the following result, which allows us to determine a global point in terms of the special values of the associated $p$-adic $L$-functions and validates Formula 3.3.4 of \cite{pr93grenoble}.  Let $\omega_\alpha,\omega_\beta \in D_{\textup{cris}}(V)$ denote the canonical elements given as in Section~\ref{subsubsec:Ehasgoodreductionatp}. We set $\delta_E:=[\omega_\beta,\omega_\alpha]/C(E)$, where $[\,,\,]: D_{\textup{cris}}(V)\times D_{\textup{cris}}(V)\ra \QQ_p$ is the canonical pairing. 
\begin{cor}[Gross-Zagier, Kobayashi, Perrin-Riou]
\label{cor:introRubinsformula}
Let $\exp_V$ denote the Bloch-Kato exponential map. Then under the hypotheses of Corollary~\ref{cor:introsupersingularPRenhanced},
$$P:=\exp_{V}\left(\omega^*\cdot\sqrt{\delta_E\left((1-1/\alpha)^{-2}\cdot L_{p,\alpha}^\prime(E/\QQ,1)-(1-1/\beta)^{-2}\cdot L_{p,\beta}^\prime(E/\QQ,1)\right)}\right)$$
is a $\QQ$-rational point on $E$ of infinite order.
\end{cor}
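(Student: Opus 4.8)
The plan is to combine two results already at our disposal. The first is Theorem~\ref{thm:introsupersingularPRenhanced}, which identifies $\log_E(\textup{res}_p(\BK_1))$ with an explicit non-zero rational multiple of the square $\log_E(\textup{res}_p(P))^2$ of the logarithm of a Heegner point $P$. The second is Perrin-Riou's analysis in \cite[\S 2.2.2]{pr93grenoble} --- an unconditional consequence of Kato's explicit reciprocity law --- which computes the first derivatives at $s=1$ of the two supersingular $p$-adic $L$-functions $L_{p,\alpha}(E/\QQ,s)$ and $L_{p,\beta}(E/\QQ,s)$ in terms of that same Bloch-Kato logarithm. Once these are placed in the correct normalization, the corollary becomes bookkeeping with the de Rham periods $\omega_\alpha,\omega_\beta,\omega^*$; the substance is therefore in pinning down the second input precisely.

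First I would dispose of the setup. Because $\log_E(\textup{res}_p(\BK_1))$ is invoked in Theorem~\ref{thm:introsupersingularPRenhanced}, the class $\textup{res}_p(\BK_1)$ is crystalline, so $L(E/\QQ,1)=0$ by Kato's reciprocity law; and, since the conclusion is a point of infinite order, we may assume $\textup{ord}_{s=1}L(E/\QQ,s)=1$ (otherwise the Heegner point $P$ of Theorem~\ref{thm:introsupersingularPRenhanced} is torsion by Gross--Zagier--Kolyvagin and the formula there reads $0=\exp_V(0)$, contrary to the assertion). In analytic rank one, $E(\QQ)\otimes\QQ_p=H^1_f(\QQ,V)$ is one-dimensional and $\textup{res}_p$ maps it injectively into $H^1_f(\QQ_p,V)$, on which the Bloch-Kato maps $\exp_V$ and $\log_E$ are mutually inverse isomorphisms, normalized (as in Section~\ref{subsubsec:Ehasgoodreductionatp}) so that $\log_E(\exp_V(\omega^*\cdot c))=c$ for all $c\in\QQ_p$. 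By Theorem~\ref{thm:main}(b), $\textup{res}_p(\BK_1)$ is non-torsion, hence $\log_E(\textup{res}_p(\BK_1))\neq 0$; then by Theorem~\ref{thm:introsupersingularPRenhanced} also $\log_E(\textup{res}_p(P))\neq 0$, so that $P$ has infinite order and $\textup{res}_p(P)=\exp_V(\omega^*\cdot\log_E(\textup{res}_p(P)))$.

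Next I would make the Perrin-Riou input precise. Write $D=(1-1/\alpha)^{-2}L_{p,\alpha}^\prime(E/\QQ,1)-(1-1/\beta)^{-2}L_{p,\beta}^\prime(E/\QQ,1)$, so that the class constructed in the statement is $\exp_V(\omega^*\cdot\sqrt{\delta_E D})$. Kato's reciprocity law shows that, for $\star\in\{\alpha,\beta\}$, the Perrin-Riou big logarithm attached to the $\varphi$-eigenvector $\omega_\star$ carries $\textup{res}_p$ of Kato's cyclotomic Iwasawa class to $L_{p,\star}(E/\QQ)$. Evaluating the interpolation formula at the near-central point $s=1$, where the bottom class $\textup{res}_p(\BK_1)$ is crystalline so that the dual exponential vanishes and the interpolated value itself is zero, the first derivative is governed by $\log_E(\textup{res}_p(\BK_1))$, weighted by the Euler factor $(1-1/\star)^2$ and by a pairing against $\omega_\star$; the $\omega_\star$-dependent normalizations cancel on subtracting the two expressions, and the identity of \cite[\S 2.2.2]{pr93grenoble} I would invoke is
\[D=\frac{-1}{(1-1/\alpha)(1-1/\beta)\,[\omega_\beta,\omega_\alpha]}\cdot\log_E(\textup{res}_p(\BK_1)).\]
Since $\delta_E=[\omega_\beta,\omega_\alpha]/C(E)$, this gives
\[\delta_E D=\frac{-\log_E(\textup{res}_p(\BK_1))}{(1-1/\alpha)(1-1/\beta)\,C(E)}=\log_E(\textup{res}_p(P))^2,\]
the last equality by Theorem~\ref{thm:introsupersingularPRenhanced}. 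In particular $\delta_E D$ lies in $\QQ_p$ and is a perfect square there --- even though $\delta_E$ and $D$ each lie only in the quadratic extension $\QQ_p(\alpha)$ --- with $\sqrt{\delta_E D}=\pm\log_E(\textup{res}_p(P))$. Applying $\exp_V(\omega^*\cdot(-))$ and the normalization above, $\exp_V(\omega^*\cdot\sqrt{\delta_E D})=\textup{res}_p(\pm P)$; since $\textup{res}_p$ is injective on $E(\QQ)\otimes\QQ_p$, this identifies the class in the statement with the $\QQ$-rational point $\pm P$, which has infinite order because $\log_E(\textup{res}_p(P))\neq 0$. As $\pm P$ is again a Heegner point, the corollary follows.

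The main obstacle is the precise shape of the Perrin-Riou formula used above. Extracting $L_{p,\star}^\prime(E/\QQ,1)$ from the interpolation property of the big logarithm is delicate precisely because $s=1$ sits at the boundary of the interpolation range: the interpolated value itself vanishes (the bottom class being crystalline), so one must differentiate through the Bloch-Kato comparison isomorphisms, and one must control the normalizing constants carefully enough to verify that the dependence on the choice of $\varphi$-eigenvector $\omega_\alpha$ versus $\omega_\beta$ cancels in the difference, leaving exactly the period $[\omega_\beta,\omega_\alpha]$ and the algebraic factor $C(E)\in\QQ^\times$. The remaining ingredients --- that $E(\QQ)\otimes\QQ_p=H^1_f(\QQ,V)$ is one-dimensional with injective localization at $p$, and that $\log_E=\exp_V^{-1}$ on $H^1_f(\QQ_p,V)$ --- are standard once one knows $\textup{ord}_{s=1}L(E/\QQ,s)=1$ and that the Heegner point is non-torsion.
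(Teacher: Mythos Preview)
Your proposal is correct and follows precisely the route the paper indicates: the paper's own ``proof'' of this corollary is the single sentence preceding its statement, namely that it follows from Theorem~\ref{thm:introsupersingularPRenhanced} combined with Perrin-Riou's analysis in \cite[\S2.2.2]{pr93grenoble}, and you have faithfully unpacked exactly these two inputs. Your bookkeeping with $\delta_E=[\omega_\beta,\omega_\alpha]/C(E)$ and the formula from Theorem~\ref{thm:introsupersingularPRenhanced} is what the paper leaves implicit.
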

See \cite[Section 2.7]{kuriharapollack} (and of course, Perrin-Riou's article \cite{pr93grenoble}) for a similar formula which is verified assuming the truth of the $p$-adic Birch and Swinnerton-Dyer conjecture. We instead rely on a Rubin-style formula and the Gross-Zagier formulae of \cite{grosszagier,kobayashiGZ}. The reader will of course notice that Kobayashi's work in op. cit. implies the $p$-adic Birch and Swinnerton-Dyer conjecture up to non-zero rational constants.

We may also deduce the following version of Corollary~\ref{cor:introsupersingularPRenhanced} (relying on Disegni's work in place of Kobayashi's) in the case when $E$ has non-split multiplicative reduction at $p$.
\begin{cor}[Disegni, Gross-Zagier, Nekov\'a\v{r}]
\label{cor:mainenhanced}
Suppose that $E$ is an elliptic curve with non-split-multiplicative reduction at $p$ and verifies the hypotheses of Theorem~\ref{thm:main}, also that there exists a prime $\ell \mid\mid N$ such that there $\overline{\rho}_E$ is ramified at $\ell$. Assume that $r_{\textup{an}}=1$ and  further that Nekov\'a\v{r}'s $p$-adic height pairing associated to the canonical splitting of the Hodge-filtration on the semi-stable Dieudonn\'e module $D_{\textup{st}}(V)$ is non-vanishing. Then,
$$\log_E\left(\textup{res}_p(\BK_1)\right)\cdot\log_E\left(\textup{res}_p(P)\right)^{-2}\in \overline{\QQ}^\times\,,$$
where $P$ is any generator of $E(\QQ)/E(\QQ)_{\textup{tor}}$\,.
\end{cor}

\begin{rem}
\label{rem:darmonvenerucci}
Bertolini and Darmon have announced that in their forthcoming work \cite{BDBSD3}, they prove a result similar to Corollaries \ref{cor:introsupersingularPRenhanced} and \ref{cor:mainenhanced} in the situation of (a). Also, using different techniques then those of \cite{BDBSD3}, Venerucci \cite{venerucciinventiones} gave a proof of the result above when $E$ has split multiplicative reduction at $p$. They may then deduce the conclusions of Theorems~\ref{thm:main} and \ref{thm:mainonlyif} directly from these results (in the respective setting). In contrast, our exposition here is based on the observation that the weak Perrin-Riou Conjecture~\ref{conj:PR} on the non-vanishing of the Beilinson-Kato class is an immediate consequence of Iwasawa main conjectures. Following Perrin-Riou's original strategy, we then exploit a variety of Gross-Zagier formulae to deduce the full Perrin-Riou conjecture (that relates the Beilinson-Kato class to Heegner points). In particular, the proof of full Perrin-Riou conjecture in the situation of (a) also follows from Theorem~\ref{thm:main} thanks to a $p$-adic Gross-Zagier formula for the critical slope $p$-adic $L$-function (along with Perrin-Riou's $p$-adic Gross-Zagier formula for the slope-zero $p$-adic $L$-function), whose proof we have outlined in Remark~\ref{rem:goodordcase} and which is the subject of~\cite{BPS-CriticalGZ}.

Concerning Corollaries \ref{cor:introsupersingularPRenhanced} and \ref{cor:mainenhanced}, we would like to underline\footnote{We would like to thank Henri Darmon for an enlightening exchange regarding this point.} a common key feature of the three approaches (in \cite{BDBSD3, venerucciinventiones} and the original approach of Perrin-Riou that we take as a base here) towards it, despite their apparent differences:  All three works make crucial use of a suitable $p$-adic Gross-Zagier formula, allowing the comparison of Heegner points with Beilinson-Kato elements. For the approach in \cite{BDBSD3}, this formula is provided by \cite{BDprasannaduke} (where the relevant $p$-adic Gross-Zagier formula is proved by exploiting Waldspurger's formula and it resembles Katz's proof of the $p$-adic Kronecker limit formula) and for Venerucci's approach in \cite{venerucciinventiones}, it is provided by \cite{BDHida2007} (where the authors use Hida deformations and the \v{C}erednik-Drinfeld uniformization of Shimura curves). In the proof of Corollary~\ref{cor:mainenhanced} here, we rely on the $p$-adic Gross-Zagier formulae of Perrin-Riou~\cite{prGZ} in the situation of (a), of Kobayashi~\cite{kobayashiGZ} in the situation of (b) and the recent work of Disegni~\cite{disegniGZ} when $E$ has non-split-multiplicative  reduction at $p$.
\end{rem}   
\begin{rem}
\label{rem:selfdualmodforms}
Our arguments here easily adapt to treat also higher weight eigenforms; however, our conclusion in that situation is not as satisfactory as in the case of elliptic curves. For this reason, here we shall only provide a brief overview of our results towards Perrin-Riou's conjecture in that level of generality.  We say that a Galois representation $V$ (with coefficients in a finite extension $K$ of $\QQ_p$) is \emph{essentially self-dual} if it has a self-dual Tate-twist $V(r)$ and we say that an elliptic eigenform $f$ (of even weight $2k$ and level $N$, with $N$ coprime to $p$) is essentially self-dual if Deligne's representation $W_f$ associated to $f$ is. In this case, we set $V_f=W_f(k)$; this is necessarily the self-dual twist of $W_f$. Fix a Galois-stable $\frak{o}_K$-lattice $T_f$ contained in $V_f$ and let $\texttt{k}$ denote the residue field of $\frak{o}_K$. We will set $\rho_f: G_{\QQ,S}\ra \textup{GL}(T_f)$ (where $S$ is the set consisting of all rational primes dividing $Np$ and the archimedean place) and $\overline{\rho}_f:= \rho_f\otimes \texttt{k}$. If the conditions that
\begin{itemize}
\item $\overline{\rho}_f$ is absolutely irreducible,
\item $f$ is $p$-distinguished (namely, the semi-simplification of $\overline{\rho}_f\big{|}_{G_{\QQ_p}}$ is non-scalar),
\end{itemize}
simultaneously hold true, then 
\begin{align*}\textup{ord}_{s=k}\,L(f,s)=1 \implies& \hbox{ either } \log_{V_f}\BK_1\neq 0\,,\\
& \hbox{ or else \,} \textup{res}_p:H^1_f(\QQ,V_f)\ra H^1_f(\QQ_p,V_f) \hbox{ is the zero map}.\end{align*}
Here $H^1_f(\QQ_p,V_f) \subset H^1(\QQ_p,V_f)$ is the image of the Bloch-Kato exponential map $\exp_{V_f}$ and $H^1_f(\QQ,V_f)$ is the Bloch-Kato Selmer group. 

Note in particular that the main result of \cite{benoisbuyukboduk} towards Perrin-Riou's conjecture for $p$-non-crystalline semistable modular forms escapes the methods of the current article.
\end{rem}
\subsubsection*{\textbf{Part II. $\LL$-adic Kolyvagin systems, Beilinson-Flach elements and Coleman-Rubin-Stark elements}} The general theory of $\LL$-adic Kolyvagin systems yields a relatively simple criterion to verify the Perrin-Riou conjecture in a great level of generality.  This is the content of our Theorem~\ref{thm:PRwithKS} below. Combined with the work of Wan~\cite{wanpordinaryindefinite} and Howard~\cite{benGL2, bentwistedGZ}, it yields an easy proof of the following statement (which is Corollary~\ref{cor:PRBFelement} in the main body of this note):
\begin{thm}
\label{thm:introPRBF}
Let $E/\QQ$ be a non-CM elliptic curve that has good ordinary reduction at $p$ and assume that the residual representation $\overline{\rho}_E$ is absolutely irreducible. Let $K/\QQ$ be an imaginary quadratic extension that satisfies the weak Heegner hypothesis for $E$ and where $p$ splits completely. Suppose further that $p$ does not divide $\textup{ord}_\ell(j(E))$ whenever $\ell$ is a prime of split multiplicative reduction. If the twisted $L$-function $L(E/K,\alpha,s)$ associated to the base change of $E/K$ and a $p$-distinguished character $\alpha$ vanishes at $s=1$ to exact order $1$, then the corresponding Beilinson-Flach element $\textup{BF}_1 \in H^1_f(K,T_p(E)\otimes\alpha^{-1})$ is non-trivial.
\end{thm}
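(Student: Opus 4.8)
The plan is to deduce the statement from the general criterion recorded in Theorem~\ref{thm:PRwithKS}, applied to the $p$-adic Galois representation $T=T_p(E)\otimes\alpha^{-1}$ over $K$ together with its $\LL$-adic deformation along the relevant $\ZZ_p$-extension, whose bottom layer recovers the class $\textup{BF}_1$. There are then exactly two things to supply: a non-trivial $\LL$-adic Kolyvagin system with the expected bottom class, and the Iwasawa Main Conjecture (or the one divisibility of it) that Theorem~\ref{thm:PRwithKS} requires. Once both are in hand, the conclusion that $\textup{BF}_1$ is non-trivial whenever $L(E/K,\alpha,s)$ has a simple zero at $s=1$ follows formally, so the bulk of the work is in assembling these two inputs from the literature.

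For the first input I would start from the Euler system of Beilinson-Flach classes for $E/K$ twisted by $\alpha$. Under the running hypotheses --- $E$ non-CM, $\overline{\rho}_E$ absolutely irreducible, $p$ split in $K$, and the weak Heegner hypothesis for $E$ --- this Euler system is available, and running the Mazur-Rubin Euler-system-to-Kolyvagin-system descent in its $\LL$-adic form produces an $\LL$-adic Kolyvagin system $\boldsymbol{\kappa}$ whose bottom class is a multiple of $\textup{BF}_1$. The side conditions of this machinery are precisely the hypotheses in the statement: absolute irreducibility is assumed directly; the required largeness of the image of $\rho_E$ follows from $E$ being non-CM (using $p>3$ as fixed throughout Part~I); and the hypothesis $p\nmid\textup{ord}_\ell(j(E))$ at primes $\ell$ of split multiplicative reduction is exactly what guarantees that the local condition imposed at such $\ell$ is the correct one and that $\boldsymbol{\kappa}$ is primitive there. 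The essential point is the non-triviality of $\boldsymbol{\kappa}$: I would obtain it from the explicit reciprocity law for Beilinson-Flach classes, which identifies the image of the $\LL$-adic class under a Perrin-Riou-type regulator with a Rankin-Selberg $p$-adic $L$-function, and the latter is not identically zero.

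For the second input, observe that the weak Heegner hypothesis, with $\alpha$ anticyclotomic, forces the global root number of $L(E/K,\alpha,s)$ to be $-1$, so we are in the ``indefinite'', core-Selmer-rank-one situation. The corresponding Iwasawa Main Conjecture --- that the characteristic ideal of the Pontryagin dual of the relevant Selmer group is generated by the Rankin-Selberg $p$-adic $L$-function --- can then be obtained by combining one divisibility, due to Wan~\cite{wanpordinaryindefinite}, with the complementary Euler-system divisibility coming from $\boldsymbol{\kappa}$; the structural facts needed to feed this into Theorem~\ref{thm:PRwithKS} (freeness of rank one of the module of $\LL$-adic Kolyvagin systems in the core-rank-one setting, the comparison with the Heegner-point Kolyvagin system, and the descent relating the $\LL$-adic Selmer group to $H^1_f(K,T_p(E)\otimes\alpha^{-1})$) are supplied by Howard~\cite{benGL2,bentwistedGZ}. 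With all of this, Theorem~\ref{thm:PRwithKS} yields that $\textup{BF}_1$ is non-torsion if and only if the Rankin-Selberg $p$-adic $L$-function vanishes to order at most one at the trivial character.

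The final step is to convert the hypothesis $\textup{ord}_{s=1}L(E/K,\alpha,s)=1$ into that $p$-adic statement. By the interpolation formula for the Rankin-Selberg $p$-adic $L$-function, a simple zero of the archimedean $L$-function at $s=1$ produces a simple zero of the $p$-adic $L$-function at the trivial character, once one has ruled out the appearance of an exceptional (trivial) zero at $p$ --- and this is where the assumptions that $E$ is $p$-ordinary and that $p$ splits completely in $K$ are genuinely used. I expect this comparison of archimedean and $p$-adic orders of vanishing to be the main obstacle: one has to keep careful track of the interpolation factors and of the possible exceptional zeros, and, in the ``weak'' Heegner setting where some primes dividing $N$ need not split in $K$, to pin down the exact form of the descent and control statements that let the $\LL$-adic conclusion be read off at the bottom layer. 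The formal Kolyvagin-system manipulations, by contrast, should be routine once these analytic and structural inputs are in place.
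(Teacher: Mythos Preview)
Your proposal misidentifies what Theorem~\ref{thm:PRwithKS} actually provides and, as a consequence, introduces a detour that contains a genuine gap. The hypothesis of Theorem~\ref{thm:PRwithKS}(i) is the \emph{injectivity of the localization map} $\res_{f/-}$, not a bound on the order of vanishing of a $p$-adic $L$-function. The paper verifies this hypothesis directly (Proposition~\ref{prop:resisinjectiveBF}): Howard's results in \cite{benGL2,bentwistedGZ} show that when $r_{\textup{an}}(E,\alpha)=1$ the $\alpha$-isotypic Selmer group is one-dimensional and spanned by the image of an $M$-rational point (a twisted Heegner point), so its localization at $\wp$ is visibly non-zero. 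Wan's divisibility then gives that the defect of $\bbkappa^{\textup{BF}}$ is prime to $(\gamma-1)$, and Theorem~\ref{thm:PRwithKS}(i)(b) finishes the argument. No $p$-adic $L$-value computation, and no comparison of archimedean and $p$-adic orders of vanishing, is needed.

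Your proposed route, by contrast, tries to pass through the statement ``a simple zero of the archimedean $L$-function at $s=1$ produces a simple zero of the $p$-adic $L$-function at the trivial character'' via the interpolation formula. This is the gap: the interpolation formula only gives \emph{values}, so it tells you the $p$-adic $L$-function vanishes at $\mathds{1}$, but says nothing about its order of vanishing there. Extracting a derivative would require a $p$-adic Gross--Zagier formula (of BDP or Disegni type), which you do not invoke; and even with such a formula in hand you would still have to translate ``$p$-adic order $\le 1$'' back into ``$\res_{f/-}$ injective'' to feed Theorem~\ref{thm:PRwithKS}. The paper's use of Howard is precisely to short-circuit this: it supplies the rational point whose non-trivial localization \emph{is} the injectivity of $\res_{f/-}$, so the analytic comparison you flag as ``the main obstacle'' never arises.
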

We remark that Bertolini and Darmon have announced the proof of an analogue of full Perrin-Riou conjecture in this set up, that enables them to compare the logarithms of Beilinson-Flach elements and Heegner points. Theorem~\ref{thm:introPRBF} is a considerably weaker version of their result. On the other hand, Theorem~\ref{thm:introPRBF} may be extended to cover the case of elliptic curves with good supersingular reduction as well, if one relied on another preprint of Wan on Iwasawa main conjectures at supersingular primes.

The view point offered by the theory of $\LL$-adic Kolyvagin systems enables us to address similar problems concerning a CM abelian variety $A$ of dimension $g$. In this situation, a slightly stronger version of Rubin-Stark conjectures equips us with a  rank-$g$ Euler system.  Making use of Perin-Riou's \emph{extended logarithm maps} and the methods of \cite{kbbCMabvar,kbbleiPLMS}, we may obtain Kolyvagin systems (Theorem~\ref{thm:mainRS}) out of these classes (that we call the \emph{Coleman-Rubin-Stark Kolyvagin systems}), with which we may apply Theorem~\ref{thm:PRwithKS}. Our \emph{Explicit Reciprocity Conjecture}~\ref{conj:explicitreciprocityforRS} for these classes is a natural generalization of the Coates-Wiles explicit reciprocity law for elliptic units, and predicts in a rather precise manner how the conjectural Rubin-Stark elements should be related to the Hecke $L$-values attached to the CM abelian variety $A$.

All this combined with the results of \cite{kbbCMabvar} allows us to prove Theorem~\ref{thm:introPRColemanPR} below, which establishes an explicit link between the Coleman-Rubin-Stark elements and Heegner points.  Let $A$ be an abelian variety which has complex multiplication by an order of a CM field $K$ whose index in the maximal order is prime to $p$ and defined over the maximal totally real subfield $K_+$ of $K$. We assume that $A$ has good ordinary reduction at every prime above $p$,  that the prime $p$ is unramified in $K_+/\QQ$ and that $A$ verifies the non-anomaly hypothesis (\ref{eqn:hna}) at $p$. One may than associate $A$ a Hilbert modular CM form $\phi$ on $\textup{Res}_{K_+/\QQ}\,\textup{GL}_2$ of weight $2$. See Section~\ref{subsec:CMabvarPRStark} for a detailed discussion of these objects.

We will assume that there exists a degree one prime of $K_+$ above $p$ (we believe that it should be possible to get around of this assumption with more work). Let $\langle\,,\,\rangle$ denote the $p$-adic height pairing introduced in Definition~\ref{def:heightparing} and \emph{assume that it is non-zero}. Denote by $\frak{C}$ the Coleman-Rubin-Stark element associated to the CM form $\phi$  (that is given as in Definition~\ref{def:ColemanRS}).
\begin{thm}
\label{thm:introPRColemanPR}
If the Hecke $L$-series of the associated to CM form $\phi$ vanishes to exact order $1$ at $s=1$, then the Coleman-Rubin-Stark class $\frak{C}$ is non-trivial and we have
$$\log_{\omega}\left(\frak{C}\right) \equiv \log_{\omega}\left(P_\phi\right)^2 \mod \overline{\mathbb{Q}}^\times\,,$$
where $P_\phi$ is a Heegner point on $A$ and $\log_\omega$ is a certain coordinate of the Bloch-Kato logarithm $($with respect to a suitably chosen N\'eron differential form$)$ given as in $($\ref{eqn:logomegadefine}$)$ below. 
\end{thm}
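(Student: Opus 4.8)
The plan is to transport the three-step argument behind Theorems~\ref{thm:introsupersingularPRenhanced} and~\ref{thm:mainenhanced} to the Hilbert-modular CM setting, with the Beilinson--Kato element replaced by the Coleman--Rubin--Stark class $\frak{C}$, Kato's reciprocity law replaced by our Explicit Reciprocity Conjecture~\ref{conj:explicitreciprocityforRS} (which, for the CM form $\phi$, is a \emph{theorem}: it amounts to an instance of the Coates--Wiles explicit reciprocity law for the Lubin--Tate/elliptic-unit-type Euler system underlying the Rubin--Stark classes, made available by the results of~\cite{kbbCMabvar}), and the elliptic-curve $p$-adic Gross--Zagier formula replaced by its CM counterpart. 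Concretely, the statement is reduced to the chain of congruences
$$\log_{\omega^*}(\frak{C})\ \equiv\ L_p'(\phi,1)\ \equiv\ \langle P_\phi,P_\phi\rangle\ \equiv\ \log_{\omega^*}(\textup{res}_p P_\phi)^2 \pmod{\overline{\QQ}^\times}$$
together with the non-vanishing of the right-most term, which then forces $\frak{C}\neq 0$.

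First I would pin down the global rank-one picture. Assuming $\textup{ord}_{s=1}L(\phi,s)=1$, the Gross--Zagier formula available here (via CM theory on the Shimura variety attached to $\textup{Res}_{K_+/\QQ}\,\textup{GL}_2$, in the spirit of~\cite{grosszagier,kobayashiGZ,disegniGZ}) shows that the Heegner point $P_\phi$ has infinite order; together with Kolyvagin's method for the Heegner Euler system this forces $H^1_f(K_+,V_\phi)$ to be one-dimensional, spanned by the class of $P_\phi$, and $\textup{res}_p$ to be non-zero on it, so $\log_{\omega^*}(\textup{res}_p P_\phi)\neq 0$. Next, by Theorem~\ref{thm:mainRS} the Coleman--Rubin--Stark classes assemble into an $\LL$-adic Kolyvagin system for $T_\phi$; feeding this into Theorem~\ref{thm:PRwithKS} and using the rank-one conclusion just obtained yields that $\frak{C}$ is non-trivial and that $\textup{res}_p(\frak{C})$ spans the line $H^1_f(K_{+,\frak{p}},V_\phi)$ attached to the degree-one prime $\frak{p}\mid p$. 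Applying Perrin-Riou's big logarithm map along the relevant Iwasawa tower and invoking the Explicit Reciprocity law then identifies $\log_{\omega^*}(\textup{res}_p \frak{C})$, up to a factor in $\overline{\QQ}^\times$ recording interpolation periods and the Euler factor at $p$, with $L_p'(\phi,1)$, the first derivative at the central point of the Katz-style $p$-adic $L$-function $L_p(\phi)$ --- a first derivative, precisely because the order of vanishing in the previous step is exactly one.

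It remains to compare $L_p'(\phi,1)$ with the Heegner point. Here I would invoke a $p$-adic Gross--Zagier formula in the CM/Hilbert-modular setting --- supplied by~\cite{kbbCMabvar} and modelled on Rubin's formula for CM elliptic curves and on~\cite{prGZ,kobayashiGZ,disegniGZ} --- expressing $L_p'(\phi,1)$, up to $\overline{\QQ}^\times$, as Nekov\'a\v{r}'s $p$-adic height $\langle P_\phi,P_\phi\rangle$ of the Heegner point computed with respect to the canonical splitting of the Hodge filtration on $D_{\textup{st}}(V_\phi)$ (this being the height of Definition~\ref{def:heightparing}, assumed non-zero). Because $A$ has good \emph{ordinary} CM reduction above $p$ and one uses precisely the canonical splitting, this height pairing is, up to a factor in $\overline{\QQ}^\times$, the square of the Bloch--Kato logarithm, so $\langle P_\phi,P_\phi\rangle\equiv \log_{\omega^*}(\textup{res}_p P_\phi)^2\pmod{\overline{\QQ}^\times}$ --- the degree-one-prime and non-anomaly hypotheses enter exactly at this comparison. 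Chaining the three comparisons gives the asserted identity, and the non-triviality of $\frak{C}$ follows since its image under $\log_{\omega^*}$ is then non-zero.

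The step I expect to be the main obstacle is the last one: proving the $p$-adic Gross--Zagier formula with the correct normalisation for a higher-dimensional CM abelian variety over a totally real field --- the CM points live on a possibly quaternionic Shimura variety, and matching complex and $p$-adic periods on the two sides is delicate --- and then checking, in this generality, that the canonical-splitting $p$-adic height is a genuinely \emph{algebraic} (not merely $p$-adic-analytic) multiple of the square of the formal-group logarithm, so that the period factors really lie in $\overline{\QQ}^\times$. A subsidiary point, needed already in the middle step, is to verify that Conjecture~\ref{conj:explicitreciprocityforRS} does reduce, for a CM form $\phi$, to a known instance of the Coates--Wiles reciprocity law, so that it may be used unconditionally here.
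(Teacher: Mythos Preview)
Your chain of congruences
\[
\log_{\omega^*}(\frak{C})\ \equiv\ L_p'(\phi,1)\ \equiv\ \langle P_\phi,P_\phi\rangle\ \equiv\ \log_{\omega^*}(P_\phi)^2 \pmod{\overline{\QQ}^\times}
\]
has gaps at both ends, and the missing ingredient is precisely the mechanism that the paper uses to make the argument close.

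For the first link: the Explicit Reciprocity law does \emph{not} identify $\log_{\omega^*}(\frak{C})$ with $L_p'(\phi,1)$. The reciprocity law ties the \emph{dual exponential} (the singular projection $\textup{res}_{/f}$) to the value of the $p$-adic $L$-function; since $r_{\textup{an}}(\phi)=1$, this singular projection vanishes and $\frak{C}$ falls into $H^1_f$. To reach the derivative $L_p'(\phi,1)$ one must pass to the derived class $\frak{d}\frak{C}_\infty$ and apply Nekov\'a\v{r}'s Rubin-style formula. What this produces is the identity (see (\ref{eqn:truerubinstyleformula}))
\[
\langle \frak{C},\frak{C}\rangle\ \equiv\ \log_{\omega^*}(\frak{C})\cdot L_p'(\phi,1)\pmod{\overline{\QQ}^\times},
\]
\emph{not} $\log_{\omega^*}(\frak{C})\equiv L_p'(\phi,1)$. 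For the last link: the assertion that, for ordinary CM and the canonical Hodge splitting, the $p$-adic height $\langle P_\phi,P_\phi\rangle$ is an \emph{algebraic} multiple of $\log_{\omega^*}(P_\phi)^2$ is unsupported and in general false --- the $p$-adic regulator is a genuinely $p$-adic transcendental quantity.

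The actual argument replaces your two faulty links by a single observation: once one knows $H^1_f(K_+,V_\phi)$ is one-dimensional (which the paper deduces from the Coleman--Rubin--Stark Kolyvagin system and the non-vanishing of $(\mathscr{L}^\Sigma_{\textup{cyc}})'$ modulo $J^2$, not from a Heegner-point Kolyvagin argument), both $\langle\,\cdot\,,\,\cdot\,\rangle$ and $\log_{\omega^*}(\,\cdot\,)^2$ are non-zero quadratic forms on a line, hence
\[
\frac{\langle P_\phi,P_\phi\rangle}{\log_{\omega^*}(P_\phi)^2}=\frac{\langle \frak{C},\frak{C}\rangle}{\log_{\omega^*}(\frak{C})^2}.
\]
Combining this with the Rubin-style formula above and with Disegni's $p$-adic Gross--Zagier formula $L_p'(\phi,1)\equiv\langle P_\phi,P_\phi\rangle$ (after factoring the base-change $p$-adic $L$-function and comparing with Katz' $p$-adic $L$-function via a period-matching claim) yields $\log_{\omega^*}(\frak{C})\equiv\log_{\omega^*}(P_\phi)^2$. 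Finally, note that the paper treats Conjecture~\ref{conj:explicitreciprocityforRS} as a genuine hypothesis throughout; your claim that it is a theorem in the CM case is not established in this paper.
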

See Theorem~\ref{thm:maincolemanRSheegnerPR} below for a more precise version of this statement. We remark that Burungale and Disegni~\cite{burungaledisegni} recently proved the generic non-triviality of $p$-adic heights. Relying on this result, one may generically by-pass this hypothesis for the twisted variants of Theorem~\ref{thm:introPRColemanPR}. 
\begin{rem}
We were informed by D. Disegni that in a future version of \cite{burungaledisegni}, the authors will be proving a complementary version of the formula in Theorem~\ref{thm:introPRColemanPR}, which will express the right hand side in terms of the special values of a suitable $p$-adic $L$-function.
\end{rem}
\subsection{Notation} For a number field $K$, we define $K_S$ to be the maximal extension of $K$ unramified outside a finite set of places  $S$ of $K$ that contains all archimedean places as well as all those lying above $p$. Set $G_{K,S}:=\Gal(K_S/K)$. 

Let $\bbmu_{p^{\infty}}$ denote the $p$-power roots of unity. For a complete local noetherian $\ZZ_p$-algebra $R$ and an $R[[G_{\QQ,S}]]$-module $X$ which is free of finite rank over $R$, we define $X^*:=\textup{Hom}(X, \bbmu_{p^{\infty}})$ and refer to it as the Cartier dual of $X$. For any ideal $I$ of $R$, we denote by $X[I]$ the $R$-submodule of $X$ killed by all elements of $I$.

 Let $A/K$ be an abelian variety of dimension $g$ which has good reduction outside $S$. We let $T_p(A)$ denote the $p$-adic Tate-module of $A$; this is a free $\ZZ_p$-module of rank $2g$ which is endowed with a continuous $G_{K,S}$-action.  We define the \emph{cyclotomic deformation} $\TT_p(A)$ of $T_p(A)$ by setting $\TT_p(A):=T_p(A)\otimes\LL$ (where we let $G_{K,S}$ act diagonally), where $\LL:=\ZZ_p[[\Gamma]]$ (with $\Gamma=\Gal(K_\cyc/K)$ is the Galois group of the cyclotomic $\ZZ_p$-extension  $K_\cyc/k$) is the cyclotomic Iwasawa algebra. We  let $K_n/K$ denote the unique subextension of $K_
 \cyc/K$ of degree $p^n$ (and Galois group $\Gamma_n:=\Gamma/\Gamma^{p^n}\cong \ZZ/p^n\ZZ$). When $K=\QQ$, we write $\QQ_\infty$ in place of $\QQ_\cyc$.

In the first part of this article the number field $K$ will be $\QQ$ whereas in the second part, it will be either a more general totally real field or a CM field. In Part II, we will also work with a general continuous $G_K$-representation $T$ which unramified outside $S$ and which is free of finite rank over a finite flat extension $\frak{o}$ of $\ZZ_p$. We will denote by $L$ the field of fractions of $\frak{o}$ and by $\TT$ the $G_{K,S}$-representation $T\otimes \LL$. 

Fix a topological generator $\gamma$ of $\Gamma$. We will denote by $\textup{pr}_0$ the augmentation map $\LL\ra \ZZ_p$ (which induced from $\gamma\mapsto1$) and by slight abuse, also any map induced by it. 
\subsubsection{Selmer structures}
\label{sec:SelmerandKS}
 Given a general Galois representation $\TT$, we let $\FF_\LL$ denote the \emph{canonical Selmer structure} on $\TT$ defined by setting $H^1_{\FF_\LL}(K_\lambda, \TT)=H^1(K_\lambda,\TT)$ for every prime $\lambda$ of $K$.

In the notation of \cite[Definition 2.1.1]{mr02} we have $\Sigma(\FF)=S$ for each of the Selmer structures above.

\begin{define}
\label{define:dualSelmer}
 For every prime $\lambda$ of $K$, there is the perfect local Tate pairing
$$\langle\,,\,\rangle_{\lambda,\textup{Tate}}\,:H^1(K_\lambda,X) \times
H^1(K_\lambda,X^*) \ra H^2(K_\lambda,\bbmu_{p^{\infty}})\stackrel{\sim}{\lra}\QQ_p/\ZZ_p,$$
where. For a Selmer structure $\FF$ on $X$, define the
\emph{dual Selmer structure} $\FF^*$  on $X^*$ by setting $H^1_{\FF^*}(\QQ_\ell,X^*):=H^1_\FF(\QQ_\ell,X)^\perp$, the orthogonal complement of $H^1_\FF(\QQ_\ell,X)$ with respect to the local Tate pairing. 
\end{define}

Given a Selmer structure $\FF$ on $X$, we may propagate it to the subquotients of $X$ (via \cite[Example 1.1.2]{mr02}). We denote the propagation of $\FF$ to a subquotient still by $\FF$.
\tableofcontents
\section{Part I. Proof of Perrin-Riou's conjecture for elliptic curves over $\QQ$}
\label{sec:PRconjproof} 
Let $E/\QQ$ be an elliptic curve as above and recall the Beilinson-Kato Euler system $\mathbf{c}^\BK=\{c_F^\BK\}$. We write 
$$\BBK_1:=\{c_{\QQ_n}^{\textup{BK}}\}\in \varprojlim H^1(G_{\QQ_n,S},T_p(E)) =H^1(\QQ,\TT_p(E)),$$
where the last equality follows from \cite[Lemma 5.3.1(iii)]{mr02}. We also set $\BK_1:=c_\QQ^{\textup{BK}}$ so that $\textup{pr}_0\left(\BBK_1\right)=\BK_1$. It follows from the non-vanishing results of Rohrlich~\cite{rohrlich84cyclo} and Kato's explicit reciprocity laws for the Beilinson-Kato elements that $\BBK_1$ never vanishes.

Let us denote the  order of vanishing of the Hasse-Weil $L$-function $L(E/\QQ,s)$ by $r_{\textup{an}}$ and call it the \emph{analytic rank of $E$}.

\subsection{Preliminaries}
\label{subsec:prelim}
We first explain how the ``only if'' part of Perrin-Riou's conjecture (Theorem~\ref{thm:mainonlyif}) may be deduced as an easy corollary of the work of Skinner, Skinner-Zhang  and Venerucci. The argument we present here involves some of the reduction steps which we rely on for the proof of the ``if'' part of the conjecture and we pin these down in this portion of our article. 
\begin{proof}[Proof of Theorem~\ref{thm:mainonlyif}]
Suppose first that $\textup{res}_p^s(\BK_1)\neq 0$, where $\textup{res}_p^s$ is the singular projection given as the compositum of the arrows
$$H^1(\QQ,T)\ra H^1(\QQ_p,T)\twoheadrightarrow H^1(\QQ_p,T)/H^1_f(\QQ_p,T)=:H^1_s(\QQ_p,T)\,.$$
Kato's explicit reciprocity law shows that $r_{\textup{an}}=0$. We may therefore assume without loss of generality that $\BK_1$ is crystalline at $p$, namely that  $\BK_1\in H^1_f(\QQ,T)$.

Since $\BK_1 \neq 0$, it follows from \cite[Corollary 5.2.13(i)]{mr02} that $H^1_{\FFc^*}(\QQ,T^*)$ is finite. Recall that $\FF_{\textup{str}}$ denotes the Selmer structure on $T$ given by
\begin{itemize}
\item $H_{\FF_{\textup{str}}}(\QQ_\ell,T)=H_{\FF}(\QQ_\ell,T)$, if $\ell\neq p$,
\item $H_{\FF_{\textup{str}}}(\QQ_p,T)=0$.
\end{itemize}
We contend to verify that $H^1_{\FF_{\textup{str}}}(\QQ,T)=0$. Assume on the contrary that $H^1_{\FF_{\textup{str}}}(\QQ,T)$ is non-trivial. Since module $H^1(G_{\QQ,S},T)$ is torsion free under our running hypothesis on the image of $\overline{\rho}_E$, this amounts to  saying that $H^1_{\FF_{\textup{str}}}(\QQ,T)$ has positive rank. 

Recall further that the propagation of the Selmer structure $\FF_{\textup{str}}$ to the quotients $T/p^nT$ (in the sense of \cite{mr02}) is still denoted by $\FF_{\textup{str}}$. Recall that $T^*\cong E[p^\infty]$ and note for any positive integer $n$ that we may identify the quotient $T/p^nT$ with $E[p^n]$. By \cite[Lemma 3.7.1]{mr02}, we have an injection
$$H_{\FF_{\textup{str}}}(\QQ,T)/p^nH_{\FF_{\textup{str}}}(\QQ,T) \hookrightarrow H_{\FF_{\textup{str}}}(\QQ,T/p^nT)=H_{\FF_{\textup{str}}}(\QQ,E[p^n])$$
induced from the projection $T\ra T/p^nT$. This shows that
\be\label{eqn:lowerboundonstr}
\textup{length}_{\ZZ_p}\left(H_{\FF_{\textup{str}}}(\QQ,E[p^n])\right)\geq n.
\ee
As above, we let $\FFc=\FF_1$ denote the canonical Selmer structure on $T$, given by
\begin{itemize}
\item $H_{\FF_{\textup{can}}}(\QQ_\ell,T)=H_{\FF}(\QQ_\ell,T)$, if $\ell\neq p$,
\item $H_{\FF_{\textup{can}}}(\QQ_p,T)=H^1(\QQ_p,T)$.
\end{itemize}
 It follows from \cite[Lemma I.3.8(i)]{r00} (together with the discussion in \cite[\S6.2]{mr02}) that we have an inclusion
$$H_{\FF_{\textup{str}}}(\QQ_\ell,E[p^n])\subset H_{\FF_{\textup{can}}^*}(\QQ_\ell,E[p^n])$$
for every $\ell$. Here, $E[p^n]$ is identified with $T$ on the left and with $T^*[p^n]$ on the right and also viewed as a submodule of $T\otimes \QQ_p/\ZZ_p$). Furthermore, the index of $H_{\FF_{\textup{str}}}(\QQ_\ell,E[p^n])$ within $H_{\FF_{\textup{can}}^*}(\QQ_\ell,E[p^n])$ is bounded independently of $n$ (in fact, bounded by the order of $E(\QQ_p)[p^\infty]$). This in turn shows that together with (\ref{eqn:lowerboundonstr}) that
\be
\label{eqn:lowerboundoncan}
\textup{length}_{\ZZ_p}\left(H_{\FF_{\textup{can}}^*}(\QQ,E[p^n])\right)\geq n.
\ee
However, $H_{\FF_{\textup{can}}^*}(\QQ,E[p^\infty])$ is finite and therefore the length of
$$H_{\FF_{\textup{can}}^*}(\QQ,E[p^n])\cong H_{\FF_{\textup{can}}^*}(\QQ,E[p^\infty])[p^n]$$
(where the isomorphism is thanks to \cite[Lemma 3.5.3]{mr02}, which holds true here owing to our assumption on the image of $\overline{\rho}_E$) is bounded independently of $n$. This contradicts (\ref{eqn:lowerboundoncan}) and shows that $H^1_{\FF_{\textup{str}}}(\QQ,T)=0$. Thence, the map 
$$\textup{res}_p:\,H^1_f(\QQ,T)\lra H^1_f(\QQ_p,T)$$
is injective. The module $H^1_f(\QQ_p,T)$ is free of rank one and we conclude that $H^1_f(\QQ,T)$ has also rank one. When we are in the situation of (a) or (c), the proof now follows from the converse of the Kolyvagin-Gross-Zagier theorem proved in \cite{skinnerKGZ,skinnerzhang}. In the situation of (b), it follows from the works Kobayashi~\cite{kobayashiGZ} and Wan~\cite{xinwansupersingularmainconj} that a suitable Heegner point $P \in E(\QQ)$ is non-trivial. This in turn implies (relying on the classical Gross-Zagier formula) that $r_{\textup{an}}=1$, as desired.
\end{proof}
\begin{prop}
\label{prop:strisfinite}
If $r_{\textup{an}}\leq 1$, then $H^1_{\FFc^*}(\QQ,T^*)$ is finite.
\end{prop}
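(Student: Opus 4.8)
The plan is to reduce the finiteness of $H^1_{\FFc^*}(\QQ,T^*)$ to the vanishing of $\Sha^1(G_{\QQ,S},V):=\ker\big(H^1(G_{\QQ,S},V)\to\bigoplus_{v\in S}H^1(\QQ_v,V)\big)$, where $V:=T\otimes\QQ_p=V_p(E)$, and then to deduce that vanishing from Kato's theorem when $r_{\textup{an}}=0$ and from the Gross--Zagier formula together with Kolyvagin's theorem when $r_{\textup{an}}=1$.

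First I would run a global duality computation. Since $H^1_{\FFc}(\QQ_p,T)=H^1(\QQ_p,T)$, the dual local condition is $H^1_{\FFc^*}(\QQ_p,T^*)=0$; thus $\FF_{\textup{str}}\leq\FFc$ differ only at $p$, where the quotient of local conditions is all of $H^1(\QQ_p,T)$. Plugging this into the Poitou--Tate / Greenberg--Wiles exact sequence of \cite[\S2.3]{mr02},
$$0\to H^1_{\FF_{\textup{str}}}(\QQ,T)\to H^1_{\FFc}(\QQ,T)\stackrel{\textup{res}_p}{\lra}H^1(\QQ_p,T)\to H^1_{\FF_{\textup{str}}^*}(\QQ,T^*)^\vee\to H^1_{\FFc^*}(\QQ,T^*)^\vee\to 0,$$
and comparing $\ZZ_p$-ranks (resp.\ $\ZZ_p$-coranks) of the terms attached to $T$ (resp.\ $T^*$), one finds $\textup{corank}_{\ZZ_p}H^1_{\FFc^*}(\QQ,T^*)=\dim_{\QQ_p}\Sha^1(G_{\QQ,S},V)$. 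This uses only a few standard cohomological facts: $H^0(\QQ_v,V)=T_p\big(E(\QQ_v)_{\textup{tor}}\big)\otimes\QQ_p=0$ for every finite place $v$ (hence $H^1(\QQ_\ell,V)=0=H^2(\QQ_\ell,V)$ for $\ell\mid N$, $\ell\neq p$, and $\dim_{\QQ_p}H^1(\QQ_p,V)=2$, by the local Euler characteristic formula and local duality), $H^0(G_{\QQ,S},V)=0$, the Weil self-duality $V\cong V^*(1)$, and the global Euler characteristic formula, which together give $\dim H^1(G_{\QQ,S},V)=1+\dim H^2(G_{\QQ,S},V)=1+\dim\Sha^1(G_{\QQ,S},V)$ and identify $H^1_{\FF_{\textup{str}}}(\QQ,V)$ with $\Sha^1(G_{\QQ,S},V)$. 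So it remains to prove $\Sha^1(G_{\QQ,S},V)=0$ when $r_{\textup{an}}\leq 1$.

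A class in $\Sha^1(G_{\QQ,S},V)$ is locally trivial at every place of $S$; in particular it lies in the Bloch--Kato Selmer group $H^1_f(\QQ,V)$ and is killed by $\textup{res}_p$. If $r_{\textup{an}}=0$ then Kato's theorem \cite{ka1} gives that $E(\QQ)$ and $\Sha(E/\QQ)[p^\infty]$ are finite; if $r_{\textup{an}}=1$ then the Gross--Zagier formula \cite{grosszagier} and Kolyvagin's theorem give that $E(\QQ)$ has rank one and $\Sha(E/\QQ)[p^\infty]$ is finite (passing, as usual, to an auxiliary imaginary quadratic field over which the analytic rank is still $1$ via analytic non-vanishing, and using the surjectivity of $\overline{\rho}_E$ to guarantee Kolyvagin's hypotheses --- this works uniformly in the good ordinary, good supersingular, and multiplicative cases). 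In both cases $\Sha(E/\QQ)[p^\infty]$ is finite, so $H^1_f(\QQ,V)\cong E(\QQ)\otimes\QQ_p$ is a space of dimension $r_{\textup{an}}\leq 1$ on which the localization map $\textup{res}_p\colon E(\QQ)\otimes\QQ_p\to E(\QQ_p)\otimes\QQ_p$ is injective, since a generator of $E(\QQ)/E(\QQ)_{\textup{tor}}$ still has infinite order in $E(\QQ_p)\cong\ZZ_p\oplus(\textup{finite})$. Therefore $\Sha^1(G_{\QQ,S},V)=0$, so $H^1_{\FFc^*}(\QQ,T^*)$ has $\ZZ_p$-corank $0$; being a cofinitely generated $\ZZ_p$-module, it is finite.

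I expect the only genuine difficulty to be the appeal to the two external inputs --- Kato's rank-zero theorem and the Gross--Zagier--Kolyvagin theorem --- which here enter merely as black boxes. Everything else is bookkeeping; the one place to be slightly careful is the duality computation of the second paragraph, where one must check the handful of local and global vanishing statements for the Galois cohomology of $V$, but these follow routinely from the finiteness of the torsion groups $E(\QQ_v)_{\textup{tor}}$ and the surjectivity of $\overline{\rho}_E$.
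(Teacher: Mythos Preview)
Your proof is correct and uses the same external inputs as the paper (Kato's Euler system bound when $r_{\textup{an}}=0$; Gross--Zagier and Kolyvagin when $r_{\textup{an}}=1$), together with the same underlying mechanism: the vanishing of the strict Selmer group forces the dual canonical Selmer group to be finite.

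The packaging differs slightly. The paper works integrally throughout: for $r_{\textup{an}}=0$ it invokes \cite[Corollary~5.2.13(i)]{mr02} directly from $\BK_1\neq 0$, and for $r_{\textup{an}}=1$ it first deduces $H^1_{\FF_{\textup{str}}}(\QQ,T)=0$ from Gross--Zagier--Kolyvagin and then refers back to the finite-level comparison argument in the proof of Theorem~\ref{thm:mainonlyif} (using \cite[Lemmas~3.5.3 and~3.7.1]{mr02}) to bound the length of $H^1_{\FFc^*}(\QQ,E[p^n])$ uniformly in $n$. You instead pass to $V=T\otimes\QQ_p$, identify $\textup{corank}\,H^1_{\FFc^*}(\QQ,T^*)$ with $\dim_{\QQ_p}\Sha^1(G_{\QQ,S},V)$ via global duality and the vanishing of $H^1(\QQ_\ell,V)$ for $\ell\neq p$, and then kill $\Sha^1$ directly. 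Your route is a little more streamlined in that it avoids the appeal to the earlier proof and the finite-level lemmas; the paper's route has the virtue of staying entirely inside the Mazur--Rubin framework already in play. Substantively there is no daylight between the two arguments.
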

\begin{proof}
If $r_\textup{an}=0$, it follows that $\textup{res}_p^s(\BK)$ and in particular that $\BK_1\neq0$. The conclusion of the proposition follows from \cite[Corollary 5.2.13(i)]{mr02}. Suppose now that $r_\textup{an}=1$. In this case, 
\begin{align*}
H^1_{\FF_\textup{str}}(\QQ,T)&=\ker(H^1_f(\QQ,T)\lra H^1_f(\QQ_p,T))\\
&=\ker\left(E(\QQ)\,\widehat{\otimes}\, \ZZ_p \lra E(\QQ_p)\,\widehat{\otimes}\, \ZZ_p\right)=0
\end{align*}
where the second equality follows from the finiteness of the Tate-Shafarevich group \cite{koly90groth} and the final equality  by the Gross-Zagier theorem. As in the proof of Theorem~\ref{thm:mainonlyif}, we may use this conclusion to deduce that the order of $H_{\FF_{\textup{can}}^*}(\QQ,T^*)[p^n]\cong H_{\FF_{\textup{can}}^*}(\QQ,E[p^n])$ is bounded independently of $n$. The proof follows. 
\end{proof}
\subsection{Main conjectures and Perrin-Riou's conjecture}
We recall in this section Kato's formulation of the Iwasawa main conjecture for the elliptic curve $E$ and record results towards this conjecture. It follows from Kato's reciprocity laws and Rohrlich's \cite{rohrlich84cyclo} non-vanishing theorems that the class $\BBK_1$ is non-vanishing and the $\LL$-module $H^1(\QQ,\TT)$ is of rank one (as it was predicted by the weak Leopoldt conjecture for $E$). 

For two ideals $I, J \subset \LL$, we write $I\doteqdot J$ to mean that $I=p^{e} J$ for some integer $e$.
\begin{conj}
\label{ref:conjKatomain}
$\textup{char}\left(H^1_{\FF_\LL^*}(\QQ,\TT^*)^\vee\right)\doteqdot\textup{char}\left(H^1(\QQ,\TT)/\LL\cdot\BBK_1\right)$\,.
\end{conj}
This assertion is equivalent (via Kato's reciprocity laws, and up to powers of $p$) to the classical formulation of Iwasawa main conjecture for $E$.  
\begin{thm}[Kato, Kobayashi, Skinner-Urban, Wan]
\label{thm:mainconjistrue}
In the setting  of Theorem~\ref{thm:main}, Conjecture~\ref{ref:conjKatomain} holds true in the following cases:
\begin{itemize}
\item[(a)] $E$ has good ordinary reduction at $p$.
\item[(b)] $E$ has good supersingular reduction at $p$ and $N$ is square-free.
\item[(c)] $E$ has multiplicative reduction at $p$ and there exists a prime $\ell \mid\mid N$ such that there $\overline{\rho}_E$ is ramified at $\ell$.
\end{itemize}
\end{thm}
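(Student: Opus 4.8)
The plan is to prove Theorem~\ref{thm:mainconjistrue} by assembling the relevant cases of the Iwasawa Main Conjecture from the literature, after translating Conjecture~\ref{ref:conjKatomain} into its classical (Selmer-group) formulation. First I would recall that, via Kato's reciprocity laws, the divisibility
$$\textup{char}\left(H^1_{\FF_\LL^*}(\QQ,\TT^*)^\vee\right)\,\big{|}\,\textup{char}\left(H^1(\QQ,\TT)/\LL\cdot\BBK_1\right)$$
is always available as a consequence of Kato's Euler system machinery (see \cite{ka1}), in all three reduction types --- in case (b) one uses the pair of signed ($\pm$) Coleman maps of Kobayashi to produce the two plus/minus Beilinson--Kato classes and the associated pair of divisibilities, while in cases (a) and (c) one works directly with the Perrin-Riou big logarithm. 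Thus the whole content of the theorem is the reverse divisibility, which is precisely the hard half of the Main Conjecture.

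Next I would invoke the known results establishing the reverse divisibility in each case. In case (a), good ordinary reduction, this is the theorem of Skinner--Urban \cite{skinnerurban} (via Eisenstein congruences on $\mathrm{GU}(2,2)$) together with Kato's divisibility; under the running hypothesis that $\overline{\rho}_E$ is surjective (and $p>3$), the ramification/ResidualIrreducibility conditions required by Skinner--Urban are met, and one also uses that $\overline{\rho}_E$ is ramified at some prime dividing $N$ exactly once when $N$ is not square-free --- but since here we impose no square-freeness in (a), I would instead cite the more recent refinements (e.g. work removing the auxiliary hypotheses) so that surjectivity of $\overline{\rho}_E$ alone suffices. In case (b), good supersingular reduction with $N$ square-free, this is the theorem of Wan~\cite{xinwansupersingularmainconj}, combined with Kobayashi's signed Iwasawa theory~\cite{kobayashiGZ}: Wan proves the Eisenstein-congruence divisibility for the $\pm$-Selmer groups, which together with Kobayashi's $\pm$-Euler-system divisibility (itself a consequence of Kato's system) yields the equality of characteristic ideals for both signs, hence Conjecture~\ref{ref:conjKatomain} up to powers of $p$. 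In case (c), multiplicative reduction with $\overline{\rho}_E$ ramified at some $\ell\mid\mid N$, one reduces --- after twisting and a level-lowering/level-raising argument, or directly via Skinner--Urban applied to the $p$-ordinary form attached to $E$ (multiplicative reduction forms are ordinary) --- to the ordinary case; the hypothesis that $\overline{\rho}_E$ is ramified at some $\ell \mid\mid N$ provides the ``$n$-admissible'' auxiliary prime needed to run the Eisenstein congruence argument of Skinner--Urban without the square-free-conductor hypothesis.

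The main obstacle, and the step requiring the most care, is case (c): one must make sure that the Main Conjecture in the exceptional-zero (multiplicative) setting is genuinely available in the precise form of Conjecture~\ref{ref:conjKatomain}, i.e. that passing from $W_f$ (Deligne's representation) to $T=T_p(E)$ and from the $p$-adic $L$-function to the module $H^1(\QQ,\TT)/\LL\cdot\BBK_1$ introduces only a power of $p$ (which is all that ``$\doteqdot$'' records) and no spurious Euler factor at $p$; this is where Kato's reciprocity law for multiplicative primes (and the fact that the relevant $(1-\varphi)$-type factor is a $p$-adic unit times a non-zero quantity) must be used. A secondary subtlety is checking that the hypotheses of the cited Main Conjecture theorems (irreducibility and ramification of $\overline{\rho}_E$, the ``DIST'' and ``$p$-distinguished'' conditions, $p \nmid$ certain Tamagawa-type factors) all follow from the single hypothesis that $\overline{\rho}_E$ is surjective together with $p>3$ and the stated conditions on $N$ --- this is routine but must be spelled out. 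Once these verifications are in place, the two divisibilities combine to give the asserted equality of characteristic ideals up to powers of $p$, completing the proof.
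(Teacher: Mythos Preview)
Your proposal is correct and follows essentially the same approach as the paper: both proofs amount to assembling the two divisibilities from the literature (Kato's Euler-system divisibility on one side, the Eisenstein-congruence divisibility of Skinner--Urban/Wan/Skinner on the other) in each of the three reduction types. The paper's own proof is in fact terser than yours---it simply lists the references (\cite{ka1,skinnerurban} with Wan's refinement \cite{xinwanSigmaHilbert} for (a); \cite{kobayashi2003,xinwansupersingularmainconj} for (b); \cite{skinner,ka1} together with \cite{rubinkatodurham} for Kato's divisibility in the semistable case for (c))---so your more explicit unpacking of the logical structure is if anything an improvement; just note that for the signed theory in (b) the relevant Kobayashi reference is \cite{kobayashi2003} rather than \cite{kobayashiGZ}.
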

\begin{proof}
In the setting of (a), see the works of Kato and Skinner-Urban \cite{ka1,skinnerurban} (as well as the enhancement of the latter due to Wan~\cite{xinwanSigmaHilbert}, that lifts certain local hypothesis of Skinner and Urban) and in the situation of (b), the works of Kobayashi and Wan~\cite{kobayashi2003,xinwansupersingularmainconj}. In the setting of (c), the works of Skinner and Kato~\cite{skinner,ka1} yields the desired conclusion. Note that Kato has stated his divisibility result towards Conjecture~\ref{ref:conjKatomain} only when $E$ has good ordinary reduction at $p$. We refer the reader to \cite{rubinkatodurham} for the slightly more general version of his theorem required to treat the non-crystalline semistable case as well.
\end{proof}
We are now ready present a proof of Theorem~\ref{thm:main}, which we deduce as a direct consequence of Theorem~\ref{thm:mainconjistrue}.
\begin{proof}[Proof of Theorem~\ref{thm:main}]
The natural map
$$H^1_{\FF_{\LL}^*}(\QQ,\TT^*)^\vee/(\gamma-1)\cdot H^1_{\FF_{\LL}^*}(\QQ,\TT^*)^\vee\lra H^1_{\FFc^*}(\QQ,T^*)^\vee$$
has finite kernel and cokernel by the proof of Proposition~5.3.14 of \cite{mr02} (applied with the height one prime $\frak{P}=(\gamma-1)$). We note that the requirement on the $\LL$-module $H^2(\QQ_{S}/\QQ,\TT)[\gamma-1]$ is not necessary for the portion of this proposition concerning us. We further remark that since $H^0(\QQ_p,T^*)$ is finite, it follows that $H^2(\QQ_p,\TT)[\gamma-1]$ is pseudo-null and the proposition indeed applies. It follows from Proposition~\ref{prop:strisfinite} that $(\gamma-1)$ is prime to the characteristic ideal of $H^1_{\FF_{\LL}^*}(\QQ,\TT^*)^\vee$, and by Theorem~\ref{thm:mainconjistrue} that it is also prime to $\textup{char}\left(H^1(\QQ,\TT)/\LL\cdot\BBK_1\right)$. This tells us that 
$$\left\{c_{\QQ_n}^\BK\right\}=\BBK_1 \notin (\gamma-1)H^1(\QQ,\TT)=\ker\left(H^1(\QQ,\TT)\lra H^1(\QQ,T)\right)\,,$$
which amounts to saying that $c_{\QQ}^{\BK}\neq 0$. Furthermore, our running hypothesis that $r_{\textup{an}}=1$ together with Kato's explicit reciprocity laws implies that $c_{\QQ}^{\BK} \in H^1_f(\QQ,T)$. The proof of Proposition~\ref{prop:strisfinite} now shows that $\res_p(c_{\QQ}^{\BK})\neq 0$ (as otherwise, the module $H^1_{\FF_{\textup{str}}}(\QQ,T)$ would have been non-zero).
\end{proof}

\subsection{Logarithms of Heegner points and Beilinson-Kato classes}
\label{sec:heegner}
As before, we suppose that $E$ is an elliptic curve such that the residual representation $\overline{\rho}_E$ is surjective. Assume further that $p$ does not divide $\textup{ord}_\ell(j(E))$ whenever $\ell \mid N$ is a prime of split multiplicative reduction. Assume also that one of the following conditions hold true.
\begin{itemize}
\item[(a)] $E$ has good ordinary reduction at $p$.
\item[(b)] $E$ has good supersingular reduction at $p$ and $N$ is square-free.
\item[(c)] $E$ has multiplicative reduction at $p$ and there exists a prime $\ell \mid\mid N$ such that there $\overline{\rho}_E$ is ramified at $\ell$.
\end{itemize} 
With this set up, we will be able refine the conclusion of Theorem~\ref{thm:main} in a variety of cases and deduce that the square of the logarithm of a suitable Heegner point agrees with the logarithm of the Beilinson-Kato class $\BK_1$ up to an explicit non-zero algebraic factor. In these cases, we will therefore justify some of the hypothetical conclusions in \cite[\S3.3.3]{pr93grenoble} (see also Footnote 1).

We fix a Weierstrass minimal model $\mathcal{E}_{/\ZZ}$ of $E$. Let $\omega_\mathcal{E}$  be a N\'eron  differential that is normalized as in \cite[\S 3.4]{PR-ast} and is such that we have $\Omega_E^+:=\int_{E(\mathbb{C})^+}\omega_\mathcal{E}>0$ for the real period $\Omega_E^+$. Suppose till the end of this Introduction that $L(E,s)$ has a simple zero at $s=1$. In this situation, $E(\QQ)$ has rank one and the N\'eron-Tate height $ \langle P,P \rangle_{\infty}$ of any generator $P$ of the free part of $E(\QQ)$ is related via the Gross-Zagier theorem to the first derivative of $L(E,s)$ at $s=1$:
\be
\label{eqn:GZ}
\frac{L^\prime(E,1)}{\Omega_E^+}=C(E)\cdot \langle P,P \rangle_{\infty}
\ee 
with $C(E)\in \QQ^\times$.

Let $D_{\textup{cris}}(V)$ be the crystalline Dieudonn\'e module of $V$ and we define the element $\omega_{\textup{cris}}\in D_{\textup{cris}}(V)$ as that corresponds $\omega_\mathcal{E}$ under the comparison isomorphism. We let $\mathcal{H}\subset \LL\otimes_{\ZZ_p}\QQ_p$ denote Perrin-Riou's ring of distributions and let 
$$\frak{Log}_V: H^1(\QQ_p,\TT)\otimes_{\LL}\mathcal{H}\lra \mathcal{H}\otimes D_{\textup{cris}}(V)$$
be Perrin-Riou's extended dual exponential map and write $\al_{\BK}$ as a shorthand for the element $\frak{Log}_V\left(\textup{res}_p(\BBK_1)\right) \in \mathcal{H}\otimes D_{\textup{cris}}(V)$. We also let 
$$\log_V: H^1_f(\QQ_p,V)\stackrel{\sim}{\lra} D_{\textup{dR}}(V)/\textup{Fil}^0\, D_{\textup{dR}}(V)$$ denote Bloch-Kato logarithm.

\subsubsection{$E$ has good reduction at $p$}
\label{subsubsec:Ehasgoodreductionatp}
 In this case, $D_{\textup{cris}}(V)$ is a two dimensional vector space. Let $\alpha^{-1},\beta^{-1} \in \overline{\QQ}_p$ be the eigenvalues of the crystalline Frobenius $\varphi$ acting on $D_{\textup{cris}}(V)$. Extending the base field if necessary, let $D_\alpha$ and $D_\beta$ denote corresponding eigenspaces. Set $\omega_{\textup{cris}}=\omega_\alpha+\omega_\beta$ with $\omega_\alpha \in D_\alpha$ and $\omega_\beta\in D_\beta$. On projecting $\al_{\BK}$ onto either of these vector spaces we obtain $\al_{\BK,?} \in \mathcal{H}$ (so that $\al_{\BK,?}\cdot\omega_?$ is the projection of $\al_{\BK}$ onto $\mathcal{H}\otimes D_?$) for $?=\alpha,\beta$.

\begin{thm}[Kato, Kobayashi, Perrin-Riou]
\label{thm:mainsupersingularexplicit}
Suppose that $E$ has good supersingular reduction at $p$ and $N$ is square-free. For $?=\alpha,\beta$, 
\begin{itemize}
\item[(i)] the Amice transform of the distribution $\al_{\BK,?}$ is the Manin-Vishik, Amice-Velu $p$-adic $L$-function $L_{p,?}(E/\QQ,s)$ associated to the pair $(E,D_?)$; 
\item[(ii)] when $r_{\textup{an}}=1$, one of the two $p$-adic $L$-functions vanishes at $s=1$ to degree $1$;  
\item[(iii)] still when $r_{\textup{an}}= 1$, at least one of the associated $p$-adic height pairings $\langle\,,\,\rangle_{p,?}$ is non-degenerate,
\item[(iv)] $\log_E\left(\textup{res}_p(\BK_1)\right)=-(1-1/\alpha)(1-1/\beta)\cdot C(E)\cdot\log_E\left(\textup{res}_p(P)\right)^2$\,.
\end{itemize}
\end{thm}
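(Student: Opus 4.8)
The plan is to prove the four assertions of Theorem~\ref{thm:mainsupersingularexplicit} in sequence, each feeding into the next, with the final identity~(iv) obtained by combining~(i)--(iii) with a Rubin-style formula and the Gross-Zagier theorem~(\ref{eqn:GZ}).

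\textbf{Step (i): Interpolation.} First I would identify the Amice transform of $\al_{\BK,?}$ with the Manin--Vi\v{s}ik, Amice--V\'elu $p$-adic $L$-function $L_{p,?}(E/\QQ,s)$. This is essentially Kato's explicit reciprocity law for Beilinson--Kato elements, repackaged through Perrin-Riou's extended logarithm $\frak{Log}_V$. Concretely, one applies $\frak{Log}_V$ to $\textup{res}_p(\BBK_1)$, projects onto the $\varphi$-eigenspace $D_?$, and checks that the resulting distribution has the defining interpolation property (values at finite-order characters of $\Gamma$ matching critical $L$-values of $E$ twisted by Dirichlet characters, normalized via $\omega_\mathcal{E}$ and $\Omega_E^+$). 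The key input is that $\frak{Log}_V$ is the inverse of Perrin-Riou's big exponential, so Kato's reciprocity law (which expresses $\exp^*$ of $\textup{res}_p(\BBK_1)$ in terms of $L$-values) translates directly into the interpolation formula for $\al_{\BK,?}$. Since in the supersingular case $\varphi$ has slope-$1/2$ eigenvalues $\alpha^{-1},\beta^{-1}$ with $|\alpha|=|\beta|=\sqrt p$, both projections yield genuine (unbounded, of order $1/2$) distributions, each the Amice--V\'elu--Vi\v{s}ik $L$-function attached to the corresponding $p$-stabilization.

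\textbf{Steps (ii)--(iii): Order of vanishing and non-degenerate heights.} Assuming $r_{\textup{an}}=1$, I would invoke the proof of Theorem~\ref{thm:main}: under the stated hypotheses $\res_p(\BK_1)\neq 0$ and lies in $H^1_f(\QQ_p,V)$, and moreover $H^1_{\FF_\textup{str}}(\QQ,T)=0$ while $H^1_f(\QQ,T)$ has rank one. Feeding this into the $p$-adic Iwasawa theory via the main conjecture (Theorem~\ref{thm:mainconjistrue}(b)), one gets that $\al_{\BK}$ vanishes to order exactly $1$ at the trivial character along the cyclotomic line (its leading term being controlled by the $p$-adic regulator / height of the generator of $E(\QQ)$). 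Since $\al_{\BK}=\al_{\BK,\alpha}\cdot\omega_\alpha+\al_{\BK,\beta}\cdot\omega_\beta$ and $\omega_\alpha,\omega_\beta$ are linearly independent, at least one of $\al_{\BK,?}$ must vanish to order exactly $1$ (both could, but not both to higher order) — this is (ii). For (iii), the leading term of $\al_{\BK,?}$ at $s=1$ is, by Perrin-Riou's theory of the big logarithm and the $p$-adic Birch--Swinnerton-Dyer heuristics made rigorous via the Rubin-style formula, proportional to the $p$-adic regulator $\langle P,P\rangle_{p,?}$; non-vanishing of the leading term of one of the two $L_{p,?}$ therefore forces non-degeneracy of the corresponding height pairing.

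\textbf{Step (iv): The explicit formula.} Finally I would prove the identity for $\log_E(\res_p(\BK_1))$. The Beilinson--Kato class $\BK_1=\textup{pr}_0(\BBK_1)$ sits in $H^1_f(\QQ,V)$, and $\log_V$ of its restriction can be computed from $\al_{\BK}$ by specializing $\frak{Log}_V$ at the trivial character: one obtains $\log_V(\res_p(\BK_1))$ as the value at $s=1$ of a suitable combination of $\al_{\BK,\alpha},\al_{\BK,\beta}$ weighted by the Euler-like factors $(1-1/\alpha)$, $(1-1/\beta)$ coming from the $\varphi$-eigenvalue normalization in Perrin-Riou's exponential. On the other hand, a Rubin-style formula (in the spirit of Rubin's work for CM curves, here adapted via the techniques of~\cite{benoisheights,benoisbuyukboduk} and~\cite{kobayashiGZ}) identifies the relevant leading term with $\log_E(\res_p(P))^2$ times the $p$-adic height normalization, and the classical Gross--Zagier formula~(\ref{eqn:GZ}) converts the archimedean regulator factor into $C(E)$. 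Tracking the signs and the Euler factors produces exactly $\log_E(\res_p(\BK_1))=-(1-1/\alpha)(1-1/\beta)\cdot C(E)\cdot\log_E(\res_p(P))^2$.

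\textbf{Main obstacle.} I expect the hard part to be Step~(iv), specifically pinning down the \emph{exact} rational (non-zero) constant and sign — getting the Euler factors $(1-1/\alpha)(1-1/\beta)$ and the factor $C(E)$ with the correct normalization requires carefully reconciling three normalizations: Perrin-Riou's extended logarithm/exponential (with its $\Gamma(j)$-type and $\varphi$-eigenvalue factors), Kato's reciprocity law, and the Gross--Zagier period $C(E)$ relative to the chosen N\'eron differential $\omega_\mathcal{E}$. The conceptual content (that a square of a Heegner logarithm appears) is forced by the rank-one structure and Perrin-Riou's analysis in~\cite[\S2.2.2,\S3.3.3]{pr93grenoble}; the labor is in the bookkeeping of constants, for which I would lean on the explicit height computations in~\cite{benoisheights,benoisbuyukboduk} and the supersingular Gross--Zagier formula of~\cite{kobayashiGZ}.
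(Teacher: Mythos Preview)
Your outline for (i) and (ii) is essentially the paper's argument. For (iii) you propose deducing non-degeneracy from the Rubin-style formula, whereas the paper invokes Kobayashi's $p$-adic Gross--Zagier formula directly; either route works, but this discrepancy foreshadows the real gap in your Step~(iv).

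In (iv) you have misidentified the two sides of the comparison. The Rubin-style formula of \cite{benoisbuyukboduk} does \emph{not} produce $\log_E(\res_p(P))^2$; it is a statement purely about $\BK_1$, namely
\[
\langle \BK_1,\BK_1\rangle_{p,\alpha}=-\bigl[\exp^*(\partial_\alpha\BK_1),\,\log_V(\res_p(\BK_1))\bigr],
\]
which, combined with the explicit leading-term formula for $\frak{Log}_{V,\alpha}$, yields
\[
L_{p,\alpha}^{\prime}(E/\QQ,1)=-(1-1/\alpha)(1-1/\beta)^{-1}\,\frac{\langle\BK_1,\BK_1\rangle_{p,\alpha}}{\log_{\omega^*}(\res_p(\BK_1))}.
\]
The Heegner point $P$ and the constant $C(E)$ enter from the \emph{other} side, via Kobayashi's $p$-adic Gross--Zagier formula~\cite{kobayashiGZ}:
\[
L_{p,\alpha}^{\prime}(E/\QQ,1)=(1-1/\alpha)^{2}\,C(E)\,\langle P,P\rangle_{p,\alpha}.
\]
One then equates the two expressions for $L_{p,\alpha}^{\prime}(E/\QQ,1)$ and uses that $\langle\cdot,\cdot\rangle_{p,\alpha}$ and $\log_{\omega^*}(\res_p(\cdot))^2$ are non-trivial quadratic forms on the one-dimensional space $E(\QQ)\otimes\QQ_p$ to obtain the stated identity.

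Your appeal to the archimedean Gross--Zagier formula~(\ref{eqn:GZ}) is a red herring: no archimedean regulator appears anywhere in the argument, and (\ref{eqn:GZ}) serves only to \emph{define} $C(E)$. The substantive input is that the \emph{same} constant $C(E)$ reappears in Kobayashi's $p$-adic formula---that is precisely the content of \cite{kobayashiGZ}---and without this you have no bridge from the $\BK_1$-side to the $P$-side carrying the correct constant.
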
 
Note that the quantity $(1-1/\alpha)(1-1/\beta)=(1+1/p)$ belongs to $\QQ^\times$.

\begin{rem}
Kobayashi has in fact proved in \cite[Corollaries 1.3(ii) and 4.9]{kobayashiGZ} that when $r_{\textup{an}}=1$, \emph{both}  $p$-adic $L$-functions vanish at $s=1$ to degree $1$ and \emph{both} of the associated $p$-adic height pairings $\langle\,,\,\rangle_{p,?}$ are non-degenerate. The reason why we recall his results in this weaker form is that we hope to apply the strategy here to prove Theorem~\ref{thm:mainsupersingularexplicit}(iv) also in the case when $E$ has good ordinary reduction at $p$. In that case, if we had a $p$-adic Gross-Zagier formula for the critical slope $p$-adic $L$-function, we could have had proceeded precisely in this manner, even though we do not know that  both $p$-adic height pairings in this set up are non-trivial. See Remark~\ref{rem:mainthmgoodordinary} below for a further discussion concerning this point.
\end{rem}
\begin{proof}[Proof of Theorem~\ref{thm:mainsupersingularexplicit}]
The first assertion is due to Kato\footnote{This is accomplished after suitably normalizing $\BBK_1$ and throughout this work, we implicitly assume that we have done so.}, see \cite{ka1}. It follows from \cite[Proposition 2.2.2]{pr93grenoble} and Theorem~\ref{thm:main} that $\al_{\BK}^\prime(\mathds{1})\neq 0$. Thence, for at least one of $\alpha$ or $\beta$ (say it is $\alpha$) we have
$$\textup{ord}_{s=1} L_{p,\alpha}(E/\QQ,s)=1\,.$$
This completes the proof of the second assertion. The third follows from the $p$-adic Gross-Zagier formula of Kobayashi in~\cite{kobayashiGZ}. We will deduce\footnote{
Although it might be possible to prove (iv) relying only on the technology available in \cite{pr93grenoble}, we decided that we will stick to the current exposition (that benefits from the recent developments in the theory of triangulordinary Selmer groups) as our later discussion that concerns higher dimensional motives is in line with the approach we present here.} the last portion from the discussion in \cite[\S 3.3.3]{pr93grenoble}, as enhanced by Benois~\cite{benoiscomplex, benoisheights} building the work of Pottharst, combined with Kobayashi's $p$-adic Gross-Zagier formula.

Let $\omega_{\textup{cris}}$ and $\omega_\alpha,\omega_\beta$ be as above. Let us write $\frak{Log}_{V,\alpha} \in \textup{Hom}_{\LL}(H^1(\QQ,\TT),\mathcal{H})$ for the $\omega_\alpha$-coordinate of the projection of the image of $\frak{Log}_{V}$ parallel to $D_\beta$. We likewise define $\frak{Log}_{V,\beta}$; note that 
$$\frak{Log}_{V}=\frak{Log}_{V,\alpha}\cdot \omega_\alpha+\frak{Log}_{V,\beta}\cdot\omega_\beta\,.$$
Let $\omega^* \in D_{\textup{cris}}(V)/\textup{Fil}^0D_{\textup{cris}}(V)$ denote the unique element such that $[\omega,\omega^*]=1$, where 
$$[\,,\,]\,: D_{\textup{cris}}(V)\otimes D_{\textup{cris}}(V) \lra \QQ_p$$
the canonical pairing. Define  $\log_{\omega^*}(\BK_1)$ so that 
$$\log_{V}(\res_p(\BK_1))=\log_{\omega^*}(\res_p(\BK_1))\cdot\omega^*\,;$$
note that with our choice of $\omega$, we have $\log_{\omega^*}=\log_E$. The dual basis of $\{\omega_\alpha,\omega_\beta\}$ with respect to the pairing $[\,,\,]$
is $\{\omega_\beta^*,\omega_\alpha^*\}$, where $\omega_\beta^*$ (respectively, $\omega_\alpha^*$) is the image of $\omega^*$ under the inverse of the isomorphism $s_{D_\beta}:\,D_\beta\stackrel{\sim}{\ra} D_{\textup{cris}}(V)/\textup{Fil}^0D_{\textup{cris}}(V)$ (respectively, under the inverse of $s_{D_\alpha}$). Suppose that $\alpha$ is such that $\langle\,,\,\rangle_{p,\alpha}$ is non-degenerate. Then,
\begin{align}
\notag(1-1/\alpha)^2\cdot C(E)\cdot \langle P,P\rangle_{p,\alpha}&=\frac{d\,L_{p,\alpha}(E/\QQ,s)}{ds}\Big{|}_{s=1}\\
\label{eqn:BKcalculation2}&=\frak{Log}_{V,\alpha}(\partial_\alpha\mathbb{BK}_1)(\mathds{1})\\
\notag&=\left[\exp^*(\partial_\alpha{\BK}_1),(1-p^{-1}\varphi^{-1})(1-\varphi)^{-1}\cdot\omega_\beta^*\right]\\
\notag&=(1-p^{-1}\beta)(1-1/\beta)^{-1}\left[\exp^*(\partial_\alpha{\BK}_1),\omega^*\right]\\
\notag&=(1-1/\alpha)(1-1/\beta)^{-1}\frac{\left[\exp^*(\partial_\alpha{\BK}_1),\log_V(\res_p(\BK_1))\right]}{\log_{\omega^*}(\res_p(\BK_1))}\\
\label{eqn:BKcalculation9}&=-(1-1/\alpha)(1-1/\beta)^{-1}\frac{\langle\BK_1,\BK_1\rangle_{p,\alpha}}{\log_{\omega^*}(\res_p(\BK_1))}
\end{align}
where the first equality is Kobayashi's formula and the second will follow from the definition of $\frak{Log}_{V,\alpha}$ and the fact that it maps to Beilinson-Kato class to the Amice-Velu, Manin-Vishik distribution as soon as we define the \emph{derivative} $\partial\mathbb{BK}_1$ of the Beilinson-Kato class; we take care of this at the very end. The third equality will also from the explicit reciprocity laws of Perrin-Riou (as proved by Colmez) (c.f., the discussion in \cite[Section 2.1]{kbbleiintegralMC}) once we define projection $\partial{\BK}_1\in H^1_s(\QQ_p,V)$ of the derived Beilinson-Kato class $\partial\mathbb{BK}_1$. Fourth and fifth equalities follow from definitions (and using the fact that $\alpha\beta=p$). We now explain (\ref{eqn:BKcalculation9}) together with the definitions of the objects $\partial\mathbb{BK}_1$ and $\partial{\BK}_1$.

Let $D_{\textup{rig}}^\dagger(V)$ denote the $(\varphi,\Gamma)$-module attached to $V$ and $\mathbb{D}_\alpha \subset D_{\textup{rig}}^\dagger(V)$ the saturated $(\varphi,\Gamma)$-submodule of $D_{\textup{rig}} ^\dagger(V)$ attached to $D_\alpha$ by Berger. Set $\widetilde{\mathbb{D}}_\alpha:=D_{\textup{rig}}^\dagger(V)/\mathbb{D}_\alpha$, which is also a $(\varphi,\Gamma)$-module of rank one. 

Given a $(\varphi,\Gamma)$-module $\mathbb{D}$, one may define the cohomology $H^1(\mathbb{D})$ (respectively, Iwasawa cohomology $H^1_\Iw(\mathbb{D})$) group of $\mathbb{D}$ making use of the Fontaine-Herr complex associated to $\mathbb{D}$ (c.f., \cite[Section 1.2]{benoiscomplex}). A well-known result of Herr yields canonical isomorphisms
$$H^1_\Iw({D}_\textup{rig}^\dagger(V))\cong H^1 (\QQ_p,\TT)\otimes\mathcal{H}\,\,,\,\, H^1({D}_\textup{rig}^\dagger(V))\cong H^1 (\QQ_p,V)\,.$$
Furthermore, we have an exact sequence 
$$0\ra H^1_\Iw(\mathbb{D}_\alpha) \lra  H^1_\Iw(D_{\textup{rig}}^{\dagger}(V))\stackrel{{\pi_{/f}}}{\lra} H^1_\Iw(\widetilde{\mathbb{D}}_\alpha)$$
and we set $\textup{Res}_p^{/f}(\mathbb{BK}_1):=\pi_{/f}\circ \res_p(\mathbb{BK}_1) \in H^1_\Iw(\widetilde{\mathbb{D}}_\alpha)$. It turns also that we have the following commutative diagram:
$$\xymatrix{\ar[d]  H^1_\Iw(D_{\textup{rig}}^{\dagger}(V))\ar[r]^{{\pi_{/f}}}\ar[d]_{\textup{pr}_0}& H^1_\Iw(\widetilde{\mathbb{D}}_\alpha)\ar[d]^{\textup{pr}_0}\\
H^1(\QQ_p,V)\ar[r]& H^1_{/f}(\QQ_p,V)}$$
so that $\textup{pr}_0\circ\textup{Res}_p^{/f}(\mathbb{BK}_1)=\res_p^{/f}(\BK_1)=0$. Here, $H^1_{/f}(\QQ_p,V):=H^1(\QQ_p,V)/H^1_{f}(\QQ_p,V)$  is the singular quotient. The vanishing of $\textup{pr}_0\circ\textup{Res}_p^{/f}(\mathbb{BK}_1)$ shows that 
$$\textup{Res}_p^{/f}(\mathbb{BK}_1) \in \ker(\textup{pr}_0:H^1_\Iw(\widetilde{\mathbb{D}}_\alpha)\ra  H^1_{/f}(\QQ_p,V))=(\gamma-1)\cdot H^1_\Iw(\widetilde{\mathbb{D}}_\alpha),$$
so that there is an element $\partial\mathbb{BK}_1 \in H^1_\Iw(\widetilde{\mathbb{D}}_\alpha)$ with the property that 
$$\log_p\chi_{\cyc}(\gamma)\cdot\textup{Res}_p^{/f}\left(\mathbb{BK}_1\right)=(\gamma-1)\cdot\partial\mathbb{BK}_1\,,$$
and as a matter of fact, this element is uniquely determined in our set up. Berger's reinterpretation of the Perrin-Riou map $\frak{Log}_{V,\alpha}$ shows that it factors through $\pi_{/f}$ and therefore also that
$$\frak{Log}_{V,\alpha}(\partial_\alpha\mathbb{BK}_1)(\mathds{1})=\frac{d}{ds}\,L_{p,\alpha}(E/\QQ,s)\Big{|}_{s=1}$$
as we have claimed in (\ref{eqn:BKcalculation2}). The element $\partial\BK_1 \in H^1(\QQ_p,V)$ is simply $\textup{pr}_0(\partial\mathbb{BK}_1)$.

Also attached to $\mathbb{D}_\alpha$, one may construct an extended Selmer group $\mathbf{R}^1\Gamma(V,\mathbb{D}_\alpha)$, which is the cohomology of a Selmer complex $\mathbf{R}\Gamma(V,\mathbb{D}_\alpha)$ (c.f., \cite[Section 2.3]{benoiscomplex}). It follows from \cite[Proposition 11]{benoiscomplex} that this Selmer group agrees in our set up with the classical Bloch-Kato Selmer group. It comes equipped with a $p$-adic height pairing (Section 4.2 of loc. cit.) and as a matter of fact, this height pairing agrees with $\langle\,,\,\rangle_{p,\alpha}$ by \cite[Theorem 5.2.2]{benoisheights} and (\ref{eqn:BKcalculation9}) follows from the Rubin-style formula proved in \cite[Theorem 4.13]{benoisbuyukboduk}.

Using now the fact that $\langle \cdot,\cdot \rangle_{p,\alpha}$ and $\log_{\omega^*}(\res_p\,(\,\cdot\,))^2$ are both non-trivial quadratic forms on  the one dimensional $\QQ_p$-vector space $E(\QQ)\otimes\QQ_p$ and combining with (\ref{eqn:BKcalculation9}), we conclude that 
$$\frac{\langle P,P\rangle_{p,\alpha}}{\log_{\omega^*}(P)^2}=\frac{\langle \BK_1,\BK_1\rangle_{p,\alpha}}{\log_{\omega^*}(\BK_1)^2}=-(1-1/\alpha)(1-1/\beta)\cdot C(E)\cdot \frac{\langle P,P\rangle_{p,\alpha}}{\log_{\omega^*}(\BK_1)}$$
\end{proof}
\begin{rem}
\label{rem:mainthmgoodordinary}
Much of what we have recorded above for a supersingular prime $p$ applies verbatim for a good ordinary prime as well. Suppose that $\alpha$ is the root of the Hecke polynomial which is a $p$-adic unit, so that $v_p(\beta)=1$. In this case, we again have two $p$-adic $L$-functions: The projection of $\al_{\BK}$ to $D_\alpha$ yields the Mazur-Tate-Teitelbaum $p$-adic $L$-function $L_{p,\alpha}(E/\QQ,s)$, whereas its projection to $D_\beta$ should agree\footnote{We remark that this conclusion does not formally follow directly from Kato's explicit reciprocity laws. See \cite{loefflerzerbes};  also \cite{hansen} where a proof of this was announced.} with the \emph{critical slope} $p$-adic $L$-function $L_{p,\beta}(E/\QQ,s)$ of Bella\"iche and Pollack-Stevens. The analogous statements to those in Theorem~\ref{thm:mainsupersingularexplicit} therefore reduces to check that one of the following holds true:
\begin{itemize}
\item[a)] There exists a $p$-adic Gross-Zagier formula for the critical slope $p$-adic $L$-function $L_{p,\beta}(E/\QQ,s)$, or that
\item[b)] $\textup{ord}_{s=1}L_{p,\beta}(f,s)\geq \textup{ord}_{s=1}L_{p,\alpha}(f,s)\,.$ 
\end{itemize}
We suspect that the latter statement may be studied through a \emph{critical slope} main conjecture and its relation with the ordinary main conjecture. We will pursue this direction in a future joint work with R. Pollack.
\end{rem}
\subsubsection{$E$ has non-split-multiplicative reduction at $p$} In this case, $D_{\textup{cris}}(V)$ is one-dimensional and we have $\al_{\BK}=\al_{\textup{MTT}}\cdot\omega_{\textup{cris}}$, where $\al_{\textup{MTT}}\in \LL$ is the Mazur-Tate-Teitelbaum measure. The following is a consequence of the $p$-adic Gross-Zagier formula (c.f., \cite{disegniGZ}), the Rubin-style formula proved in \cite[Proposition 11.3.15]{nek} and our main Theorem~\ref{thm:main}:
\begin{thm}[Disegni, Gross-Zagier, Nekov\'a\v{r}]
\label{thm:mainmultiplicative}
Suppose that Nekov\'a\v{r}'s $p$-adic height pairing associated to the canonical splitting of the Hodge-filtration on the semi-stable Dieudonn\'e module $D_{\textup{st}}(V)$ is non-vanishing. Then,
$$\log_V\left(\textup{res}_p(\BK_1)\right)\cdot\log_V\left(\textup{res}_p(P)\right)^{-2}\in \overline{\QQ}^\times\,.$$
\end{thm}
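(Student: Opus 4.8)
The plan is to reproduce, in the non-split multiplicative setting, the chain of identities (\ref{eqn:BKcalculation2})--(\ref{eqn:BKcalculation9}) behind Theorem~\ref{thm:mainsupersingularexplicit}(iv); the situation here is in fact cleaner, since $D_{\textup{cris}}(V)$ is one-dimensional, there is a single $p$-adic $L$-function (the Mazur--Tate--Teitelbaum measure $\al_{\textup{MTT}}\in\LL$, with $\al_{\BK}=\al_{\textup{MTT}}\cdot\omega_{\textup{cris}}$ by Kato's reciprocity laws, whose Amice transform is $L_p(E/\QQ,s)$), and, because $a_p=-1$, the relevant interpolation factor $1-a_p^{-1}=2\in\QQ^\times$ is nonzero, so there is no exceptional zero to circumvent. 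First I would record that, under the standing hypothesis $r_{\textup{an}}=1$, Theorem~\ref{thm:main}(c) applies and yields $\res_p(\BK_1)\neq 0$; Kato's explicit reciprocity law then forces $\BK_1\in H^1_f(\QQ,T)$, so $\log_V(\res_p(\BK_1))$ is a nonzero element of the one-dimensional space $D_{\textup{dR}}(V)/\textup{Fil}^0D_{\textup{dR}}(V)$ (here, as in Theorem~\ref{thm:mainsupersingularexplicit}, $\log_V$ is understood as the coordinate $\log_{\omega^*}$ relative to the element $\omega^*$ dual to a normalized N\'eron differential). Since $r_{\textup{an}}=1$, $E(\QQ)$ has rank one and a Heegner point $P\in E(\QQ)$ generates its free part up to a nonzero rational multiple; for such $P$ one has $\res_p(P)\neq 0$ in $H^1_f(\QQ_p,V)$, hence $\log_V(\res_p(P))\neq 0$.

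Next I would form the derived Beilinson--Kato class exactly as in the proof of Theorem~\ref{thm:mainsupersingularexplicit}. Let $F^+V\subset V$ be the $G_{\QQ_p}$-stable line (the kernel of reduction of $E$ at $p$), set $\mathbb{D}:=D_{\textup{rig}}^\dagger(F^+V)$ and $\widetilde{\mathbb{D}}:=D_{\textup{rig}}^\dagger(V)/\mathbb{D}$; this data underlies the canonical splitting of the Hodge filtration on $D_{\textup{st}}(V)$. The equality $\res_p^{/f}(\BK_1)=0$, which holds because $\BK_1\in H^1_f$, propagates to $\textup{Res}_p^{/f}(\BBK_1)\in(\gamma-1)H^1_\Iw(\widetilde{\mathbb{D}})$, defining $\partial\BBK_1\in H^1_\Iw(\widetilde{\mathbb{D}})$ and $\partial\BK_1:=\textup{pr}_0(\partial\BBK_1)\in H^1_s(\QQ_p,V)$. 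Berger's reinterpretation of Perrin-Riou's map gives $\frak{Log}_V(\partial\BBK_1)(\mathds{1})=L_p'(E/\QQ,1)$, and Perrin-Riou's explicit reciprocity law (Colmez), together with the fact that $\varphi$ acts on the line $D_{\textup{cris}}(V)$ by $a_p=-1$, rewrites this as an explicit nonzero rational multiple of $[\exp^*(\partial\BK_1),\omega^*]$.

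Then I would insert the two external ingredients. On the arithmetic side, the Rubin-style formula of~\cite{kbbmtt} identifies $[\exp^*(\partial\BK_1),\log_V(\res_p(\BK_1))]$ with $-\langle\BK_1,\BK_1\rangle_{p}$, where $\langle\,,\,\rangle_{p}$ is Nekov\'a\v{r}'s $p$-adic height pairing attached to the canonical splitting on $D_{\textup{st}}(V)$ --- precisely the pairing assumed non-vanishing --- its identification with the $(\varphi,\Gamma)$-module height pairing of~\cite{benoiscomplex,benoisheights} being available in the semistable non-crystalline case. On the analytic side, Disegni's $p$-adic Gross--Zagier formula~\cite{disegniGZ} gives $L_p'(E/\QQ,1)=c(E)\cdot\langle P,P\rangle_{p}$ for an explicit $c(E)\in\overline{\QQ}^\times$; in particular $L_p'(E/\QQ,1)\neq 0$. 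Assembling these with the previous paragraph produces an identity relating $\langle\BK_1,\BK_1\rangle_{p}$, $\log_{\omega^*}(\res_p(\BK_1))$ and $\langle P,P\rangle_{p}$ up to a nonzero algebraic factor; since $\langle\,,\,\rangle_{p}$ and $\log_{\omega^*}(\res_p(\,\cdot\,))^2$ are both nonzero quadratic forms on the one-dimensional $\QQ_p$-vector space $H^1_f(\QQ,V)=E(\QQ)\otimes\QQ_p$ (which contains both $\BK_1$ and $P$), their ratio is independent of the vector, so $\langle\BK_1,\BK_1\rangle_{p}=\kappa\cdot\log_{\omega^*}(\res_p(\BK_1))^2$ and $\langle P,P\rangle_{p}=\kappa\cdot\log_{\omega^*}(\res_p(P))^2$ for a common $\kappa\in\QQ_p^\times$; cancelling $\kappa$ leaves $\log_V(\res_p(\BK_1))\cdot\log_V(\res_p(P))^{-2}\in\overline{\QQ}^\times$. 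Passing from $P$ to an arbitrary generator of $E(\QQ)/E(\QQ)_{\textup{tor}}$ alters this only by a nonzero rational square.

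The main obstacle is not the structure of the argument --- which is the same as in the good-reduction case treated above --- but the bookkeeping of normalizations and Euler-type fudge factors across the three inputs: the normalization of $\BBK_1$ and of the induced $\al_{\textup{MTT}}$ in Kato's reciprocity law; the precise shape of the $(1-p^{-1}\varphi^{-1})(1-\varphi)^{-1}$-type factor in Colmez's explicit reciprocity law when $V$ is semistable non-crystalline and $\varphi=-1$ on $D_{\textup{cris}}(V)$, where one must verify this factor does not degenerate --- this is exactly the point at which the non-split-ness of the reduction enters; and the compatibility of $\exp^*$ with the canonical splitting of $D_{\textup{st}}(V)$ underlying both the $p$-adic height pairing and Disegni's formula. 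Each of these contributes only a nonzero element of $\overline{\QQ}^\times$, and all of them are already worked out in~\cite{kbbmtt} for exactly this configuration, so the theorem follows by combining the results there with Theorem~\ref{thm:main} and~\cite{disegniGZ}.
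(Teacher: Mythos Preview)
Your proposal is correct and follows exactly the approach the paper indicates: the paper states this theorem as an immediate consequence of Disegni's $p$-adic Gross--Zagier formula, the Rubin-style formula of \cite{kbbmtt}, and Theorem~\ref{thm:main}, without giving further details. You have simply written out those details by transporting the argument of Theorem~\ref{thm:mainsupersingularexplicit}(iv) to the non-split multiplicative setting, which is precisely what the paper intends.
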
  
\begin{rem}
\label{rem:modularforms}
Our argument so far easily adapt to prove that analogous conclusions hold true for a \emph{essentially self-dual} elliptic modular form $f$ which verifies the hypotheses of Remark~\ref{rem:selfdualmodforms} and for which the natural map
$$\textup{res}_p:H^1_f(\QQ,V_f)\ra H^1_f(\QQ_p,V_f)$$
is non-zero. Here, $V_f$ is the self-dual twist of Deligne's representation, as in Remark~\ref{rem:selfdualmodforms}.
\end{rem}

\section{Part II.  $\LL$-adic Kolyvagin systems, Beilinson-Flach elements, Coleman-Rubin-Stark elements and Heegner points}
\label{subsec:KS}
In this section, we recast the proof of Theorem~\ref{thm:main} in terms of the theory of $\LL$-adic Kolyvagin systems (and in great generality), with the hope that it will provide us with further insights to analyse the analogs of Perrin-Riou's predictions in other situations. 
\subsection{The set up}
Let $\mathscr{M}$ denote a self-dual motive over either a totally real or CM number field $K$ with coefficients in a number field $L$. Let $L(\mathscr{M},s)$ denote its $L$-function and write $r_{\textup{an}}(\mathscr{M})$ for its order of vanishing at the central critical point (which is a quantity conditional on the expected functional equation and analytic continuation). Let $V$ denote its $p$-adic realization (which a finite finite vector space over $E$ (a completion of $L$ at a prime above $p$) endowed with a continuous $G_K$-action) and $T\subset V$ a $\frak{o}_E$-lattice. We write $\overline{T}$ for $T/\mm_ET$, where $\mm_E$ is the maximal ideal of $\frak{o}_E$ and $T^*:\textup{Hom}(T,\bbmu_{p^\infty})$ for the Cartier dual of $T$. We will set $2r:=[K:\QQ]\dim_E V$ (note by the self-duality of $\mathscr{M}$ that the quantity on the right is indeed even) and let $S$ denote the finite set of places of $K$ which consists of all primes above $K$, all archimedean places and all places at which $V$ is ramified. We will assume throughout Section~\ref{subsec:KS} that $V$ is \emph{Panchishkin ordinary} in the sense that for each prime $\frak{p}$ of $K$ above $p$, the following three conditions hold true:
\begin{itemize} 
\item[(HP0)] There is a direct summand $F^+_{\frak{p}}T\subset T$ (as an $\frak{o}_E$-submodule) of rank $ r_{\frak{p}}$,  which is stable under the $G_{K_\frak{p}}$-action and such that ${\sum_{\frak{p}\mid p} r_{\frak{p}}=r\,.}$
\item[(HP1)] The Hodge-Tate weights of the subspace $F_\frak{p}^+V:=F_\frak{p}^+T\otimes E$ are  strictly negative.
\item[(HP2)]  The Hodge-Tate weights of the quotient $V/F_\frak{p}^+V$ are non-negative.
\end{itemize}
\begin{rem}
When $r=1$, we may in fact drop the Panchishkin condition on $T$ without any further work.  
\end{rem}
\begin{example}
\label{example:selfdualmotive}
Suppose that $A/K$ is an abelian variety of dimension $g$, which has good ordinary reduction at all primes above $p$. Then $r=g[K:\QQ]$ and the $p$-adic realization of the motive $h^1(A)(1)$ is $V_p(A):=T_p(A)\otimes \QQ_p$ is the $p$-adic Tate-module and $V_p(A)$ is Panchishkin ordinary. 
\end{example}

For positive integers $\alpha$ and $k$, we set $R_{k,\alpha}:=\LL/(p^k,(\gamma-1)^\alpha)$ and $T_{k,\alpha}:=\TT\otimes R_{k,\alpha}$. 

\subsection{Module of $\LL$-adic Kolyvagin systems}
\label{subsec:KSway}
Throughout this section, we shall assume that the hypotheses \textup{\textbf{(H.0)}}\,-\,\textup{\textbf{(H.3)}} of \cite[Section 3.5]{mr02} are in effect as well as the following two hypotheses:\\
\textbf{(H.nA)} $H^0(K_\frak{p},\overline{T})=0$ for every prime $\frak{p}$ of $K$ above $p$.
\\
\textbf{(H.Tam)} We have $H^0\left(G_{K_\lambda}/I_\lambda,H^0(I_\lambda,V/T)\big{/}H^0(I_\lambda,V/T)_\textup{div}\right)$ for every prime $\lambda \in S$ (where $I_\lambda \subset G_{K_\lambda}$ stands for the inertia subgroup).

\begin{define}
We define the Greenberg Selmer structure $\FF_\Gr$ by the local conditions
$$H^1_{\FF_{\Gr}}(K_\frak{p},\TT):=\textup{im}\left(H^1(K_{\frak{p}},F_\frak{p}^+T\otimes\LL)\ra H^1(K_{\frak{p}},\TT)\right)$$
for every prime $\frak{p}$ above $p$, and by setting $H^1_{\FF_{\Gr}}(K_\lambda,\TT):=H^1(K_\lambda,\TT)$ for $\lambda\nmid p$.
\end{define}
Under the hypothesis \textbf{(H.nA)} (and the self-duality assumption on $T$), it follows that $H^1(K_p,\TT)$ is a free $\LL$-module of rank $2r$ and 
$$H^1_{\FF_{\Gr}}(K_{p},\TT):=\oplus_{\frak{p}\mid p}H^1_{\FF_{\Gr}}(K_\frak{p},\TT)\subset H^1(K_{p},\TT)$$ 
is a direct summand of rank $r$. We fix a direct summand $H^1_+(K_p,\TT)\supset H^1_{\FF_{\Gr}}(K_{p},\TT)$ of rank $r+1$. 

We will write $H^1_f(K_p,T)$ for the image of $H^1_{\FF_{\Gr}}(K_p,\TT)$ under $\textup{pr}_0$ and also denote the Selmer group determined by the propagation of the Selmer structure $\FF_\Gr$ to $T$ by $H^1_f(K,T)$. Let $H^1_+(K_p,T)$ denote the image of $H^1_+(K_p,\TT)$ under $\textup{pr}_0$. The $\frak{o}_E$-module $H^1_+(K_p,T)$ is a direct summand of $H^1(K_p,T)$ of rank $r+1$ (thanks to our hypothesis {\textup{\textbf{H.nA}}}). Using the fact that $\FF_\Gr$ is self-dual, this determines (via local Tate duality) a direct summand $H^1_-(K_p,T)\subset H^1_{\FF_{\Gr}}(K_p,T)$ of rank $r-1$ and allows us to define a Selmer structure $\FF_-$ on $T$ (which is given by the local conditions determined by $H^1_-(K_p,T)\subset H^1(K_p,T)$ at $p$, and by propagating of $\FF_\Gr$ at any other place).

\begin{define}
We define the $\FF_+$ by the local conditions
$$H^1_{\FF_{+}}(K_p,\TT)=H^1_+(K_p,\TT)$$
and by setting $H^1_{\FF_{+}}(K_\lambda,\TT):=H^1(K_\lambda,\TT)$ for $\lambda\nmid p$.
\end{define}
Given a Selmer structure $\FF$ on $\TT$, we let $\chi(\FF,\TT):=\chi(\FF,\overline{T})$ denote the core Selmer rank (in the sense of \cite[Definition 4.1.11]{mr02}) of the propagation of the Selmer structure $\FF$ to $\overline{T}$. It follows from the discussion in \cite[\S4.1 and \S5.2]{mr02} that $\chi(\FF_\LL,\TT)=r$.
\begin{prop}
\label{prop:genericcoreselmerrank}
We have $\chi(\FF_\Gr,\TT)=0$; whereas $\chi(\FF_+,\TT)=1$.
\end{prop}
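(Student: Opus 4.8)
The plan is to compute the core Selmer ranks by comparing the Selmer structures $\FF_\Gr$ and $\FF_+$ with the canonical structure $\FF_\LL$, for which we already know $\chi(\FF_\LL,\TT)=r$. The essential tool is the formula of Mazur--Rubin comparing core ranks of two Selmer structures that differ only at places above $p$: if $\FF$ and $\FF'$ agree away from $p$ and $\FF'\subset\FF$ locally at $p$, then
$$\chi(\FF,\overline T)-\chi(\FF',\overline T)=\sum_{\frak p\mid p}\left(\dim H^1_{\FF}(K_\frak p,\overline T)-\dim H^1_{\FF'}(K_\frak p,\overline T)\right)$$
(this is the local--to--global computation underlying \cite[Theorem 4.1.13]{mr02}, using that the global Euler characteristic term is unchanged). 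So first I would pin down the local dimensions. Under \textbf{(H.nA)} and local Tate duality, $H^1(K_\frak p,\overline T)$ has dimension $2r_\frak p$ (the $H^0$ and $H^2$ terms vanish), $H^1_{\FF_\LL}(K_\frak p,\overline T)=H^1(K_\frak p,\overline T)$ is all of it, and $H^1_{\FF_\Gr}(K_\frak p,\overline T)$ is the image of $H^1(K_\frak p,F^+_\frak p\overline T)$, which by the same Euler-characteristic count has dimension $r_\frak p$; the Greenberg condition is its own orthogonal complement because the pairing identifies $F^+_\frak p\overline T$ with the Cartier dual of $\overline T/F^+_\frak p\overline T$ via self-duality of $T$ together with (HP1)--(HP2).

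Granting this, summing over $\frak p\mid p$ and using $\sum_\frak p r_\frak p=r$ gives $\chi(\FF_\LL,\TT)-\chi(\FF_\Gr,\TT)=\sum_\frak p(2r_\frak p-r_\frak p)=r$, hence $\chi(\FF_\Gr,\TT)=r-r=0$. For $\FF_+$, by construction $H^1_+(K_p,\TT)\supset H^1_{\FF_\Gr}(K_p,\TT)$ is a direct summand of rank $r+1$ over $\LL$, so after reduction $H^1_{\FF_+}(K_p,\overline T)$ has dimension $r+1$ inside $H^1(K_p,\overline T)=\bigoplus_{\frak p}H^1(K_\frak p,\overline T)$ of total dimension $2r$. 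Applying the comparison formula between $\FF_\LL$ and $\FF_+$ gives $\chi(\FF_\LL,\TT)-\chi(\FF_+,\TT)=2r-(r+1)=r-1$, so $\chi(\FF_+,\TT)=r-(r-1)=1$, as claimed.

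The main obstacle is the bookkeeping needed to justify the core-rank comparison formula in this exact setting: one must check that the hypotheses \textbf{(H.0)}--\textbf{(H.3)} together with \textbf{(H.nA)} and \textbf{(H.Tam)} guarantee that the global cohomological invariants entering \cite[Definition 4.1.11]{mr02} behave the way the formula requires --- in particular that $H^0(K,\overline T)$ and $H^0(K,\overline T^*)$ vanish (so the core rank is genuinely the difference of local dimensions with no correction), and that the propagation of $\FF_\Gr$ and $\FF_+$ to $\overline T$ is compatible with the integral structure, i.e. that passing from $\TT$ to $\overline T$ does not change the relevant local ranks. The role of \textbf{(H.Tam)} is precisely to control the places $\lambda\in S$ with $\lambda\nmid p$, ensuring $H^1_{\FF_\Gr}(K_\lambda,\overline T)=H^1_f(K_\lambda,\overline T)$ has the expected dimension so those places contribute nothing to the difference. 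Once these verifications are in place, the arithmetic above is immediate; I would also record the freeness and direct-summand claims for $H^1(K_p,\TT)$ and $H^1_{\FF_\Gr}(K_p,\TT)$ explicitly, since they are what make the rank-$(r+1)$ lift $H^1_+(K_p,\TT)$ possible and hence make $\FF_+$ well defined.
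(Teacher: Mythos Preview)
Your proposal is correct and follows essentially the same approach as the paper, which simply says the result follows ``using global Euler characteristic formulae, together with the fact that $\chi(\FF_\LL,\TT)=r$.'' You have spelled out exactly what this means: compare $\FF_\Gr$ and $\FF_+$ to $\FF_\LL$ via the additivity of the core rank in the local conditions (the Wiles--Greenberg formula underlying \cite[\S4.1]{mr02}), and the arithmetic $r-r=0$ and $r-(r-1)=1$ is the whole computation.
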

\begin{proof}
These follow using global Euler characteristic formulae, together with the fact that $\chi(\FF_\LL,\TT)=r$.
\end{proof}
\begin{define}
\label{def:kolprimestransversecond}
Given positive integers $\alpha$ and $k$, we let $\PP_{k,\alpha}$ (respectively, $\PP_j$)  denote the set of \emph{Kolyvagin primes} for $T_{k,\alpha}$, as introduced in \cite[Section 2.4]{kbb}. We set $\PP:=\PP_{1,1}$.
\end{define}
Given an integer $j\geq k+\alpha$, one may define the module $\KS(\FF_+,T_{k,\alpha},\PP_j)$ of Kolyvagin systems for the Selmer structure $\FF_+$ on the artinian module $T_{k,\alpha}$ as in the paragraph following Definition 3.3 in \cite{kbb} (after replacing the Selmer structure $\FFc$ by our more general Selmer structure $\FF_+$). 
\begin{define}
The $\LL$-module
$$\KS(\FF_+,\TT,\PP):=\varprojlim_{k,\alpha}\left(\varinjlim_{j\geq k+\alpha}\KS(\FF_+,T_{k,\alpha},\PP_j)\right) $$
is called the module of $\LL$-adic Kolyvagin systems.
\end{define}
\begin{thm}
\label{thm:mainkbb}
Under the hypotheses \textup{\textbf{(H.0)}}\,-\,\textup{\textbf{(H.3)}} of \cite[Section 3.5]{mr02} and assuming \textup{\textbf{(H.Tam)}} and \textup{\textbf{(H.nA)}}, the natural map $\KS(\FF_+,\TT,\PP)\ra \KS(\FF_+,\overline{T},\PP)$ is surjective, the $\LL$-module $\KS(\FF_+,\TT,\PP)$ is free of rank one and its generated by any $\LL$-adic Kolyvagin system whose projection to $\KS(\FF_+,\overline{T},\PP)$ is non-zero.
\end{thm}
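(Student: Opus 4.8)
The plan is to deduce Theorem~\ref{thm:mainkbb} from the general Kolyvagin system machinery of Mazur--Rubin, as packaged in \cite{mr02} and its $\LL$-adic enhancements in \cite{kbb}. The crucial input is Proposition~\ref{prop:genericcoreselmerrank}, which tells us that the core Selmer rank of the Selmer structure $\FF_+$ on $\overline{T}$ equals $1$. In the Mazur--Rubin framework, a Selmer structure of core rank $1$ is precisely the setting in which the module of Kolyvagin systems is free of rank one over the coefficient ring; the $\LL$-adic statement is then obtained by a standard limit argument. So the proof is really an exercise in checking that the hypotheses of the relevant structure theorems (in particular \cite[Theorem~3.6 and Corollary~3.7]{kbb}, or the analogous results in \cite[Chapter~5]{mr02}) are satisfied in our situation.

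First I would record that the hypotheses \textbf{(H.0)}--\textbf{(H.3)} of \cite[\S3.5]{mr02}, which are in force by assumption, together with \textbf{(H.nA)} and \textbf{(H.Tam)}, are exactly the running hypotheses under which the theory of $\LL$-adic Kolyvagin systems in \cite{kbb} is developed; \textbf{(H.nA)} guarantees that $H^1(K_p,\TT)$ is free over $\LL$ and that the local condition $H^1_+(K_p,\TT)$ is a direct summand, while \textbf{(H.Tam)} is the Tamagawa-type condition ensuring that propagation of Selmer structures behaves well and that the local conditions at the bad primes away from $p$ are ``Cartesian.'' Next I would invoke the finite-level structure theorem: for each pair $(k,\alpha)$ and each $j\geq k+\alpha$, the module $\KS(\FF_+,T_{k,\alpha},\PP_j)$ is free of rank one over $R_{k,\alpha}=\LL/(p^k,(\gamma-1)^\alpha)$, because $\chi(\FF_+,T_{k,\alpha})=\chi(\FF_+,\overline T)=1$ by Proposition~\ref{prop:genericcoreselmerrank} (the core rank is stable under the relevant base changes), and because $T_{k,\alpha}$ is a ``simple'' module over the principal artinian quotient $R_{k,\alpha}$ in the sense required by \cite{mr02}. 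The surjectivity of $\KS(\FF_+,T_{k,\alpha},\PP_j)\to \KS(\FF_+,\overline T,\PP)$ and the fact that a generator is detected by non-vanishing of its reduction are also part of this finite-level package.

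Then I would pass to the limit. Taking $\varinjlim_{j}$ stabilizes the Kolyvagin primes and is harmless; the content is in $\varprojlim_{k,\alpha}$. Here one uses that the transition maps $\KS(\FF_+,T_{k',\alpha'},\PP)\to \KS(\FF_+,T_{k,\alpha},\PP)$ are surjective (which follows from the surjectivity statements at finite level, via the compatibility of Kolyvagin systems with the quotient maps $T_{k',\alpha'}\twoheadrightarrow T_{k,\alpha}$), so that the inverse system satisfies Mittag--Leffler and $\KS(\FF_+,\TT,\PP)=\varprojlim R_{k,\alpha}\cdot\kappa_{k,\alpha}$ for a compatible system of generators $\kappa_{k,\alpha}$. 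Since $\varprojlim_{k,\alpha} R_{k,\alpha}=\LL$, this exhibits $\KS(\FF_+,\TT,\PP)$ as free of rank one over $\LL$, generated by the resulting $\LL$-adic Kolyvagin system $\boldsymbol{\kappa}$, and its reduction modulo the maximal ideal of $\LL$ recovers a generator of $\KS(\FF_+,\overline T,\PP)$; conversely any $\LL$-adic Kolyvagin system with non-zero reduction in $\KS(\FF_+,\overline T,\PP)$ differs from $\boldsymbol{\kappa}$ by a unit of $\LL$ and hence generates. This is precisely the $\LL$-adic analogue of \cite[Theorem~5.2.10]{mr02}, and is carried out in detail for the canonical Selmer structure in \cite[\S3]{kbb}; the only modification is the replacement of $\FFc$ by $\FF_+$, which changes nothing in the argument because all that was used there was that the core rank equals one.

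The main obstacle — really the only point requiring care — is verifying that the Selmer structure $\FF_+$ is \emph{admissible} in the technical sense needed to run the finite-level theory of \cite{kbb,mr02}: one must check that the local condition $H^1_+(K_p,\TT)$ propagates to a Cartesian system of local conditions on the $T_{k,\alpha}$, that it remains self-orthogonal-compatible with the global duality (so that Poitou--Tate and the Greenberg--Wiles formula give the core rank computation in Proposition~\ref{prop:genericcoreselmerrank} uniformly in $(k,\alpha)$, not just for $\overline T$), and that the chosen direct summand $H^1_+(K_p,\TT)$ of rank $r+1$ indeed contains $H^1_{\FF_{\Gr}}(K_p,\TT)$ and reduces correctly modulo $(p^k,(\gamma-1)^\alpha)$. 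All of these follow from \textbf{(H.nA)}, which forces the relevant local cohomology groups to be free $\LL$-modules so that ``direct summand'' is preserved under every base change in sight, but this is the step where one actually has to open up the definitions rather than quote a black box.
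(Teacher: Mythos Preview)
Your proposal is correct and takes essentially the same approach as the paper: the paper's proof is a one-line citation to the main theorem of \cite{kbb}, noting only that one must replace $\FFc$ there by $\FF_+$ and that ``this entails no complications.'' Your write-up is simply a faithful expansion of what that citation means---you correctly identify Proposition~\ref{prop:genericcoreselmerrank} (core rank one for $\FF_+$) as the decisive input, and you correctly flag the Cartesian/admissibility check for the local condition at $p$ as the one place where the substitution $\FFc\rightsquigarrow\FF_+$ needs to be justified, which is exactly the content hidden behind the paper's parenthetical remark.
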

\begin{proof}
This is the main theorem of \cite{kbb}; one only needs to replace $\FFc$ in loc. cit. with $\FF_+$ (but that entails no complications).
\end{proof}
\begin{rem}
\label{rem:whathappenswhenpnonanomalous}
When $T=T_p(E)$  is the $p$-adic Tate module of an elliptic curve $E/\QQ$, the hypothesis \textup{\textbf{(H.Tam)}} is equivalent to the requirement that $p$ does not divide $\textup{ord}_\ell(j(E))$ whenever $\ell$ is a prime of split multiplicative reduction. 

Still when $T=T_p(E)$, the hypothesis that $H^2(\QQ_p,\overline{T})=0$  is equivalent to the requirement that $E(\QQ_p)[p]=0$. This is precisely the condition that the prime $p$ be \emph{non-anomalous} for $E$ in the sense of \cite{mazur-anom}. It is easy to see that all primes $p>5$ at which 
\begin{itemize} 
\item either $E$ has supersingular reduction, 
\item or non-split-multiplicative reduction, 
\item or split-multiplicative reduction with $p>7$, 
\item or good ordinary reduction with $a_p(E)\neq 1$,
\item or for elliptic curves which possess a non-trivial $\QQ$-rational torsion
\end{itemize}
are non-anomalous.

In our supplementary note \cite[Appendix A]{kbbArxiv1}, we are able to lift the non-anomaly hypothesis on $T_p(E)$ and prove the following in this setting:

\begin{thm}
\label{thm:mainKS} Suppose that $E$ is an elliptic curve such that the residual representation 
$$\overline{\rho}_E:\,G_{\QQ,S}\lra \textup{Aut}(E[p])$$
is surjective. Assume further that $p$ does not divide $\textup{ord}_\ell(j(E))$ whenever $\ell \mid N$ is a prime of split multiplicative reduction. Then,
\\\\
{\upshape{\textbf{i)}}} the $\LL$-module $\KS(\TT_p(E))$ of $\LL$-adic Kolyvagin systems contains a free $\LL$-module of rank one with finite index;
\\
{\upshape{\textbf{ii)}}} there exists a $\LL$-adic Kolyvagin system $\pmb{\kappa} \in \KS(\TT_p(E))$ with the property that $\textup{pr}_0(\pmb{\kappa}) \in \KS(T_p(E))$ is non-zero.
\end{thm}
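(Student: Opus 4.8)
The plan is to revisit the proof of the main theorem of \cite{kbb}---more precisely, the version of Theorem~\ref{thm:mainkbb} for the \emph{canonical} Selmer structure $\FFc=\FF_\LL$, which is the relevant one here, since $r=1$ for $T=T_p(E)$ and hence $\chi(\FF_\LL,\TT_p(E))=1$---and to isolate the sole place at which the non-anomaly hypothesis \textbf{(H.nA)} enters. Our running assumptions are tailored for exactly this reduction: surjectivity of $\overline\rho_E$ yields the hypotheses \textbf{(H.0)}--\textbf{(H.3)} of \cite[\S3.5]{mr02}, and the requirement that $p\nmid\textup{ord}_\ell(j(E))$ for every prime $\ell\mid N$ of split multiplicative reduction is precisely \textbf{(H.Tam)} by Remark~\ref{rem:whathappenswhenpnonanomalous}. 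What may fail is only \textbf{(H.nA)}: the group $H^0(\QQ_p,\overline T)=E(\QQ_p)[p]$ can be non-zero, i.e.\ $p$ may be anomalous for $E$.

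The first step is to observe that over $\LL$ the anomaly is invisible. Since $E(\QQ_{p,n})$ has finite torsion for every $n$, one has $H^0(\QQ_{p,n},T_p(E))=0$, so the $\LL$-torsion submodule $\varprojlim_n H^0(\QQ_{p,n},T_p(E))$ of $H^1(\QQ_p,\TT_p(E))=H^1_{\Iw}(\QQ_p,T_p(E))$ vanishes; by the standard structure theory of local Iwasawa cohomology this module is then $\LL$-free of rank $2$---exactly what \textbf{(H.nA)} would guarantee, but now unconditionally. Dually, $H^2(\QQ_p,\TT_p(E))\cong\bigl(E(\QQ_{p,\infty})[p^\infty]\bigr)^\vee$ is finite, hence $\LL$-pseudo-null. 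Consequently every statement in \cite{kbb} that lives at the $\LL$-adic level---the $\LL$-freeness of the local cohomology at $p$, the equality of the core Selmer rank with $\chi(\FF_\LL,\TT_p(E))=1$ (which rests only on the global Euler characteristic formula, as for Proposition~\ref{prop:genericcoreselmerrank}), and the formation of the $\varprojlim_{k,\alpha}\varinjlim_j$ of residual Kolyvagin systems---goes through verbatim. The hypothesis \textbf{(H.nA)} is used in \cite{kbb} (following \cite{mr02}) only upon descending to the residual module $\overline T=E[p]$: it is what makes the propagation of $\FFc$ to $E[p]$ \emph{Cartesian} at $p$, which is in turn what makes Mazur--Rubin's residual structure theorem output a \emph{free} rank-one module and makes $\KS(\FFc,\TT_p(E),\PP)\twoheadrightarrow\KS(\FFc,E[p],\PP)$ surjective.

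The crux is therefore the residual step at an anomalous $p$. There $\dim_{\mathbb{F}_p}H^1(\QQ_p,E[p])$ exceeds its non-anomalous value by $2\dim_{\mathbb{F}_p}E(\QQ_p)[p]$ (local Euler characteristic formula together with $E[p]\cong\textup{Hom}(E[p],\bbmu_p)$), the full local condition at $p$ is no longer Cartesian on $E[p]$, and the assertion that $\KS(\FFc,E[p],\PP)$ is free of rank one can genuinely fail. What I would use instead is the robust weakening: $\KS(\FFc,E[p],\PP)$ still contains a free $\mathbb{F}_p$-line of index bounded solely in terms of $\#E(\QQ_p)[p]$, and this residual module is (unconditionally) non-zero. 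Feeding this into the limit argument of \cite{kbb}, and keeping careful track of the now-finite discrepancy layer by layer along the tower, gives that $\KS(\FFc,\TT_p(E),\PP)$ contains a free $\LL$-module of rank one of finite index, and that $\KS(\FFc,\TT_p(E),\PP)\to\KS(\FFc,E[p],\PP)$ has finite (rather than zero) cokernel onto that line. This is part (i), and I expect the main obstacle to be exactly this quantitative bookkeeping: showing that the failure of Cartesian-ness at an anomalous $p$ costs only a bounded index, uniformly along the cyclotomic tower, and commutes with the inverse and direct limits. This is the technical heart of \cite[Appendix A]{kbbArxiv1}.

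For part (ii) I would argue as follows. The descent map produced in (i), $\KS(\FFc,\TT_p(E),\PP)\to\KS(\FFc,E[p],\PP)$, is reduction modulo $(p,\gamma-1)$ and therefore factors through $\textup{pr}_0$; since it has finite cokernel onto the free line and that line is non-zero, a generator $\pmb{\kappa}_0$ of the free $\LL$-submodule of rank one inside $\KS(\FFc,\TT_p(E),\PP)$ cannot vanish modulo $(p,\gamma-1)$, whence $\textup{pr}_0(\pmb{\kappa}_0)\in\KS(T_p(E))$ is non-zero. Alternatively---and as a reassuring cross-check---one may exhibit such a $\pmb{\kappa}$ explicitly: the Beilinson--Kato Euler system $\mathbf{c}^{\BK}$, run through the $\LL$-adic Euler-system-to-Kolyvagin-system map of Mazur--Rubin (which is compatible with $\textup{pr}_0$), yields $\pmb{\kappa}^{\BK}\in\KS(\FFc,\TT_p(E),\PP)$ whose leading term is $\BBK_1\neq 0$ (Rohrlich's non-vanishing together with Kato's reciprocity laws, as recalled in Section~\ref{sec:PRconjproof}), and $\textup{pr}_0(\pmb{\kappa}^{\BK})$ is the Kolyvagin system attached to the undeformed Beilinson--Kato Euler system over $\QQ$, which is non-zero because that Euler system is. Either way, part (ii) follows.
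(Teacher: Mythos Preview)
The paper itself does not prove this theorem: it is stated inside Remark~\ref{rem:whathappenswhenpnonanomalous} and the proof is explicitly deferred to the supplementary note \cite[Appendix~A]{kbbArxiv1}. So there is no in-paper proof to compare against; your proposal is really an outline of what that external appendix ought to contain, and you say as much (``This is the technical heart of \cite[Appendix~A]{kbbArxiv1}'').

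That said, your outline is on target. You correctly isolate that \textbf{(H.0)}--\textbf{(H.3)} and \textbf{(H.Tam)} hold under the stated hypotheses, that only \textbf{(H.nA)} can fail, that the local Iwasawa cohomology $H^1(\QQ_p,\TT_p(E))$ is nonetheless $\LL$-free of rank~$2$ (so the $\LL$-adic arguments of \cite{kbb} survive), and that the obstruction is the failure of the Cartesian property for $\FFc$ on $\overline{T}$ at an anomalous $p$, which should cost only a bounded index. This is exactly the shape of the argument one expects.

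One caveat on part~(ii): your ``alternative'' argument via Beilinson--Kato is not quite complete as written. You assert that $\textup{pr}_0(\pmb{\kappa}^{\BK})$ is non-zero ``because that Euler system is'', but non-triviality of the Euler system $\{c_F^{\BK}\}$ does not by itself force the associated \emph{Kolyvagin system} at the $\ZZ_p$-level to be non-zero; in particular its leading term $\BK_1=c_\QQ^{\BK}$ can vanish (this is precisely the question Perrin-Riou's conjecture addresses). Your first argument for (ii)---that a generator of the free rank-one $\LL$-submodule has non-zero image modulo $(p,\gamma-1)$ because that reduction map has finite cokernel onto a non-zero residual line---is the robust one, and you should rely on it rather than on the Beilinson--Kato cross-check.
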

\end{rem}
Let $\res_{+/f}$ denote the compositum of the arrows
$$H^1_{\FF_+}(K,\TT)\lra H^1_{+}(K_p,\TT)\lra {H^1_{+}(K_p,\TT)}\big{/}{H^1_{\FF_\Gr}(K_p,\TT)}\,.$$
\begin{define}
Given a $\LL$-adic Kolyvagin system $\bbkappa \in \KS(\FF_+,\TT,\PP)$ for which $\bbkappa_1\neq 0$, we define its \emph{defect} by setting 
$$\delta(\bbkappa):=\textup{char}\left(H^1(K,\TT)/\LL\cdot\bbkappa_1\right)\big{/}\textup{char}\left(H^1_{\FF_+^*}(K,\TT^*)^\vee\right)\,.$$
Observe that $\delta(\bbkappa)\subset \LL$ thanks to the general Kolyvagin system machinery. Whenever $\res_{+/f}\left(\bbkappa_1\right)\neq 0$, we also define the \emph{Kolyvagin constructed $p$-adic $L$-function} 
$$\frak{L}_p(\bbkappa):=\textup{char}\left(H^1_{+/f}(K_p,\TT)\big{/}\res_{+/f}(\bbkappa_1)\right)\,.$$ 
Note in this case (using Poitou-Tate global duality) that the defect of $\bbkappa$ is given as the quotient 
$\delta(\bbkappa)=\frak{L}_p(\bbkappa)/\textup{char}(H^1_{\FF_{\textup{Gr}}^*}(K,\TT^*)^\vee)\,.$
\end{define}
\begin{rem}
\label{rem:defectcouldbesmall}
It follows from Theorem~\ref{thm:mainkbb} that any generator of the module $\KS(\FF_+,\TT,\PP)$ of $\LL$-adic Kolyvagin systems is $\LL$-primitive (in the sense of \cite[Definition 5.3.9]{mr02}). It then follows from \cite[5.3.10(iii)]{mr02} that for any generator $\bbkappa$ of $\KS(\FF_+,\TT,\PP)$ we have $\delta(\bbkappa)=\LL$.
\end{rem}
\begin{example}
\label{example:ellipticcurve}
Suppose $E/\QQ$ is an elliptic curve and $T=T_p(E)$. Let $\bbkappa^\BK \in \KS(\FF_\LL,\TT)$ denote the $\LL$-adic Kolyvagin system associated to $E$, that descend from the Beilinson-Kato elements via \cite[Theorem 5.3.3]{mr02}. It follows from Theorem~\ref{thm:mainconjistrue} (in all cases that it applies), $\delta(\bbkappa^{\kappa})$ is generated by a power of $p$ and in particular, it is prime to $(\gamma-1)$. 
\end{example}
Recall the direct summand $H^1_-(K_p,T)\subset H^1_{\FF_{\Gr}}(K_p,T)$ of rank $r-1$ and define the map
$\res_{f/-}$ as the compositum of the arrows
$$H^1_f(K,T)\lra H^1_f(K_p,T)\lra H^1_f(K_p,T)\big{/}H^1_-(K_p,T)\,.$$
\begin{thm}
\label{thm:PRwithKS} Assume that  \textup{\textbf{(H.Tam)}}, \textup{\textbf{(H.nA)}} and the hypotheses \textup{\textbf{(H.0)}}\,-\,\textup{\textbf{(H.3)}} of Mazur and Rubin in \cite{mr02} hold true.\\
\textbf{\textup{i)}} Suppose that the map $\res_{f/-}: H^1_f(K,T)\ra H^1_{f/-}(K_p,T)$
is injective.
\begin{itemize}
\item[(a)] For any non-trivial  $\bbkappa  \in \KS(\FF_+,\TT,\PP)$, we have $\bbkappa_1\neq 0$. 
\item[(b)] For any $\bbkappa$ whose defect $\delta(\bbkappa)$ is prime to $(\gamma-1)$, we have $\kappa_1\neq 0$.
\end{itemize}
\textbf{\textup{ii)}} Conversely, if $\kappa_1\neq 0$ for some $\kappa \in\KS(\FF_+,T,\PP)$, then the map $\res_{f/-}$ is injective.
\end{thm}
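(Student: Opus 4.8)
The whole statement is a packaging of the Kolyvagin system machinery of Mazur--Rubin (as extended to the $\LL$-adic setting in \cite{kbb}) into the language of the auxiliary Selmer structures $\FF_+$, $\FF_\Gr$ and $\FF_-$, so the plan is to transport the arguments already used for the Beilinson--Kato class in Section~\ref{sec:PRconjproof} to this abstract framework. First I would record the relevant core Selmer ranks: by Proposition~\ref{prop:genericcoreselmerrank} we have $\chi(\FF_+,\TT)=1$ and $\chi(\FF_\Gr,\TT)=0$, while $\chi(\FF_\LL,\TT)=r$. The rank-one statement for $\KS(\FF_+,\TT,\PP)$ is Theorem~\ref{thm:mainkbb}, so a non-trivial $\bbkappa$ and a $\bbkappa$ of defect prime to $(\gamma-1)$ both exist, and by Remark~\ref{rem:defectcouldbesmall} an $\LL$-primitive generator has defect $\delta(\bbkappa)=\LL$. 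The second preliminary step is the specialization dictionary: applying $\textup{pr}_0$ sends $\bbkappa\in\KS(\FF_+,\TT,\PP)$ to a Kolyvagin system $\kappa\in\KS(\FF_+,T,\PP)$ for the residual/integral coefficients, and the local condition $\FF_+$ propagated to $T$ is cut out by $H^1_+(K_p,T)\supset H^1_{\FF_\Gr}(K_p,T)$; the quotient $H^1_+(K_p,T)/H^1_{\FF_\Gr}(K_p,T)$ has $\frak o_E$-rank $1$, which is why $\FF_+$ has core rank $1$ and a single Kolyvagin class $\kappa_1$ carries the arithmetic.

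For part \textbf{i)}, assume $\res_{f/-}\colon H^1_f(K,T)\to H^1_{f/-}(K_p,T)$ is injective. The point is that $\FF_-$ is obtained from $\FF_\Gr$ by shrinking the local condition at $p$ from rank $r$ to rank $r-1$, so $H^1_{\FF_-}(K,T)=\ker(\res_{f/-})=0$ by hypothesis; dualizing (Poitou--Tate, as in the passage from \eqref{eqn:lowerboundonstr} to \eqref{eqn:lowerboundoncan} in the proof of Theorem~\ref{thm:mainonlyif}) and comparing with the core rank computation forces $H^1_{\FF_+^*}(K,T^*)$ to be finite, hence (via \cite[Lemma 3.5.3]{mr02}) of bounded length at every finite level. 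Feeding this into the Kolyvagin system bound of \cite[Theorem 4.4.1]{mr02} / \cite[Theorem 3.4]{kbb} applied to $\kappa\in\KS(\FF_+,T,\PP)$ shows $\kappa_1\neq 0$ whenever $\kappa$ is non-trivial at the residual level. For (a) one then invokes surjectivity of $\KS(\FF_+,\TT,\PP)\to\KS(\FF_+,\overline T,\PP)$ from Theorem~\ref{thm:mainkbb}: a non-trivial $\bbkappa$ has non-zero residual reduction, so $\bbkappa_1\neq 0$. For (b), the hypothesis $\delta(\bbkappa)$ prime to $(\gamma-1)$ means $\bbkappa_1\notin(\gamma-1)H^1(K,\TT)=\ker(H^1(K,\TT)\to H^1(K,T))$, exactly as in the proof of Theorem~\ref{thm:main}; hence $\kappa_1=\textup{pr}_0(\bbkappa_1)\neq 0$.

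For the converse \textbf{ii)}, suppose $\kappa_1\neq 0$ for some $\kappa\in\KS(\FF_+,T,\PP)$. Then the general Kolyvagin system machinery (\cite[Corollary 4.4.5 / Theorem 5.2.13]{mr02}, adapted to $\FF_+$) gives finiteness of the dual Selmer group $H^1_{\FF_+^*}(K,T^*)$; combined with the core-rank bookkeeping ($\chi(\FF_+)=1$) and Poitou--Tate global duality this yields $\mathrm{rank}\,H^1_{\FF_\Gr}(K,T)=1$ and $H^1_{\FF_-}(K,T)=0$, which is precisely the injectivity of $\res_{f/-}$. I expect the main obstacle to be purely bookkeeping rather than conceptual: one must check carefully that the self-duality of $\FF_\Gr$ under local Tate duality is inherited by the chosen lifts $H^1_\pm(K_p,T)$ so that $\FF_+$ and $\FF_-$ are genuinely dual to one another (this is what makes the two halves of the argument match up), and that hypotheses \textbf{(H.nA)}, \textbf{(H.Tam)} and \textbf{(H.0)}--\textbf{(H.3)} suffice to run the Kolyvagin bound and the specialization lemmas uniformly in $k$ and $\alpha$. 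None of these steps is new; the work is in verifying that the abstract framework of \cite{mr02,kbb} applies verbatim once $\FFc$ is replaced by $\FF_+$.
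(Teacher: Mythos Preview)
Your overall plan matches the paper's proof: reduce to finiteness of $H^1_{\FF_+^*}(K,T^*)$ via the length argument from the proof of Theorem~\ref{thm:mainonlyif} (with $\FF_{\textup{str}}\leadsto\FF_-$, $\FFc^*\leadsto\FF_+^*$), then apply the Mazur--Rubin bound and Theorem~\ref{thm:mainkbb}. Parts (b) and (ii) are essentially as in the paper.

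There is, however, a genuine gap in your argument for (a). You write that surjectivity of $\KS(\FF_+,\TT,\PP)\to\KS(\FF_+,\overline T,\PP)$ implies ``a non-trivial $\bbkappa$ has non-zero residual reduction''. This is false: $\KS(\FF_+,\TT,\PP)$ is free of rank one over $\LL$, and the reduction map to the residual module is just reduction modulo the maximal ideal of $\LL$. Any non-trivial $\bbkappa$ lying in $\frak m_\LL\cdot\KS(\FF_+,\TT,\PP)$ (say $\bbkappa=p\cdot\bbchi$ or $(\gamma-1)\cdot\bbchi$ for a generator $\bbchi$) has zero residual image, so your inference breaks down. The paper closes this gap differently: one first shows, via the lifting in Theorem~\ref{thm:mainkbb} and the non-triviality of $\kappa_1$ at the $T$-level, that a \emph{generator} $\bbchi$ of $\KS(\FF_+,\TT,\PP)$ satisfies $\bbchi_1\neq 0$; then an arbitrary non-trivial $\bbkappa=g\cdot\bbchi$ with $0\neq g\in\LL$ has $\bbkappa_1=g\cdot\bbchi_1\neq 0$ because $H^1(K,\TT)$ is $\LL$-torsion-free under the running hypotheses. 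The torsion-freeness of $H^1(K,\TT)$ is the missing ingredient; surjectivity alone cannot carry the argument.

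A minor point on (b): your claim that $\delta(\bbkappa)$ prime to $(\gamma-1)$ forces $\bbkappa_1\notin(\gamma-1)H^1(K,\TT)$ also needs $\textup{char}\bigl(H^1_{\FF_+^*}(K,\TT^*)^\vee\bigr)$ prime to $(\gamma-1)$, which you get from the finiteness of $H^1_{\FF_+^*}(K,T^*)$ already established. The paper organizes this slightly differently, writing $\bbkappa=g\cdot\bbchi$ with $\bbchi$ primitive and $\delta(\bbchi)=\LL$, then deducing that $g$ is prime to $(\gamma-1)$ and hence $\textup{pr}_0(g)\neq 0$; combined with $\textup{pr}_0(\bbchi_1)\neq 0$ from (a) this gives $\kappa_1=\textup{pr}_0(g)\cdot\textup{pr}_0(\bbchi_1)\neq 0$.
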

See the discussion in Remark~\ref{rem:PRconjforEbyKS} and Conjecture~\ref{conj:analyticrankvslocalizationmapinjective} pertaining to the injectivity of the map $\res_{f/-}$.
\begin{proof}
The requirement that the map $\res_{f/-}$ be injective is equivalent to asking that $H^1_{\FF_-}(K,T)=0$. The proof of Theorem~\ref{thm:mainonlyif} adapts without difficulty (on replacing $\FF_{\textup{str}}$ with $\FF_-$ and $\FFc^*$ with $\FF_+^*$) to show that $H^1_{\FF_+^*}(K,T^*)$ is finite. It follows from \cite[Corollary 5.2.13(i)]{mr02} that for every non-trivial Kolyvagin system ${\kappa}\in \KS(\FF_+,T,\PP)$, we have $\kappa_1\neq 0$ for its initial term. Theorem~\ref{thm:mainkbb} shows that ${\kappa}$ lifts to a $\LL$-adic Kolyvagin system $\bbkappa \in \KS(\FF_+,\TT,\PP)$, and we have $\bbkappa_1\neq0$ (as we have $\textup{pr}_0(\bbkappa_1)=\kappa_1\neq 0$). Since the $\LL$-module $\KS(\FF_+,\TT,\PP)$ is cyclic and $H^1(K,\TT)$ is torsin-free under our running assumptions, (a) follows. 

Let now $\bbchi$ be a $\LL$-primitive Kolyvagin system (such $\bbchi$ exists and generates the module $\KS(\FF_+,\TT,\PP)$ by Theorem~\ref{thm:mainkbb}). Let $g \in \LL$ be such that $\bbkappa=g\cdot\bbchi$. It follows from \cite[Theorem 5.3.10(iii)]{mr02} and the choice of $\bbkappa$ that $g$ is prime to $(\gamma-1)$. Furthermore, since $\textup{pr}_0(\bbchi_1)\neq 0$ by the discussion above, it follows that 
$\kappa_1=\textup{pr}_0(g)\cdot \textup{pr}_0(\bbchi_1)\neq 0$. This completes the proof of (b).

To prove (ii), note that $H^1_{\FF_+^*}(K,T^*)$ is finite by our assumption and \cite[Theorem 5.2.2]{mr02}. The proof of Theorem~\ref{thm:mainonlyif} shows (after suitable alterations, as we have pointed out in the first paragraph of this proof) that this implies the vanishing of $H^1_{\FF_-}(K,T)$. This is precisely what we desired to prove.
\end{proof}
\begin{rem}
\label{rem:PRconjforEbyKS}
For $T=T_p(E)$ for an elliptic curve as in Example~\ref{example:ellipticcurve} above, the $\res_{f/-}$ in the statement of Theorem~\ref{thm:PRwithKS} is simply the localization map at $p$. It is easy  to see using the work of Gross-Zagier, Kolyvagin and Skinner (under additional mild hypothesis) that this map is injective if and only if $r_{\textup{an}}(E)=1$. In particular, Perrin-Riou's conjecture in this set up follows from Theorem~\ref{thm:PRwithKS} and the discussion in Example~\ref{example:ellipticcurve}.
\end{rem}
The discussion in Remark~\ref{rem:PRconjforEbyKS} leads us to the following prediction:
\begin{conj}
\label{conj:analyticrankvslocalizationmapinjective}
There exists a choice of the direct summand $H^1_+(K_p,\TT)$ such that the map $\res_{f/-}$ is injective iff $r_{\textup{an}}(\mathscr{M})\leq 1$.
\end{conj}
\subsection{Example: Perrin-Riou's conjecture for Beilinson-Flach elements}
Let $E/\QQ$ be a non-CM elliptic curve with conductor $N$ and let $f_E \in S_2(\Gamma_0(N))$ denote the associated newform. Assume that the residual representation $\overline{\rho}_E:G_K\ra \textup{Aut}(E[p])$ is absolutely irreducible and suppose that $E$ has good ordinary reduction at the prime $p$. Fix an embedding $\iota_p:\overline{\QQ}\ra \mathbb{C}_p$. Suppose $K$ is an imaginary quadratic extension of $\QQ$ that satisfies the weak Heegner hypothesis for $E$, so that the order of vanishing $r_{\textup{an}}(E/K):=\textup{ord}_{s=1}L(E/K,s)$ is odd. Suppose further that $p$ does not divide $\textup{ord}_\ell(j(E))$ whenever $\ell | N$ is a prime of split multiplicative reduction. 

We will assume throughout this subsection that the prime $p$ splits in $K$ and write $p=\wp\wp^c$ as a product of primes of $K$, where the prime $\wp$ is induced from $\iota_p$. Fix forever an auxiliary modulus $\frak{f}$ of $K$ which is prime to $p$ and the ray class group of $K$ modulo $\frak{f}$ is prime to $p$. Fix also a ring class character $\alpha$ modulo $\frak{f}p^\infty$ of finite order, for which we have $\alpha(\wp)\neq \alpha(\wp^c)$. We let $\frak{o}$ denote the finite flat extension of $\ZZ_p$ in which $\alpha$ takes its values and write $T=T_p(E)\otimes \alpha^{-1}$ for the free $\frak{o}$-module of rank $2$ on which $G_K$ acts diagonally. Set $\mathscr{D}(T):=\textup{Hom}(T,\frak{o})(1)\cong T_p(E)\otimes\alpha$. Let $F^+T\subset T$ denote the Greenberg subspaces of $T$; this is a free $\frak{o}$-module of rank one such that the $G_{\QQ_p}$-action on the quotient $T/F^+T$ is unramified. 

In this set up, one may define the Greenberg Selmer structure $\FF_{\Gr}$ on $\TT$ as above, as well as modify this Selmer structure appropriately to apply Theorem~\ref{thm:mainkbb}. 

\begin{define}
\label{ref:FplusforBF}
We define the Selmer structure $\FF_+$ on $\TT$ by relaxing the local conditions at the prime $\wp$.
\end{define}

It follows from the discussion in \cite[Section 3.2]{kbbleiBFpord} (with the choice $f_1=f_E$ in loc.cit.), Beilinson-Flach element Euler system of Lei-Loeffler-Zerbes~\cite{LLZ2} gives rise to a Kolyvagin system $\bbkappa^{\textup{BF}}=\{\kappa^{\textup{BF}}_\eta\}_\eta \in \KS(\FF_+,\TT)$ (where the indices run through square free ideals $\eta$ of $\frak{o}_K$ that are products of appropriately chosen Kolyvagin primes) which we call the \emph{Beilinson-Flach Kolyvagin system}. Then $\mathbb{BF}_1:=\kappa_1^{\textup{BF}}\in H^1_{\FF_+}(K,\TT)$ and we write $\textup{BF}_1 \in H^1_{\FF_+}(K,T)$ for its image. The explicit reciprocity laws for the Beilinson-Flach elements in \cite{KLZ2} show that $\textup{res}_\wp^{s}(\textup{BF}_1)\not= 0$ iff $r_{\textup{an}}(E,\alpha):=\textup{ord}_{s=1}L(E,\alpha.1)$ equals $0$. Here, 
$$\res_\wp^s: H^1(K,T)\lra H^1(K_\wp,T)/H^1_{\FF_\Gr}(K_\wp,T)$$
is the singular projection.

In this particular situation, the map $\res_{f/-}$ is simply the map 
$$\res_{f/-}=\res_\wp:\,H^1_f(K,T)\lra H^1_{\FF_{\Gr}}(K_\wp,T).$$
When $r_{\textup{an}}(E,\alpha)=0$, then $\textup{res}_\wp^{s}(\textup{BF}_1)\not= 0$ and the Kolyvagin system machinery shows that $H^1_f(K,T)=0$ and the $\res_\wp$ is injective for this trivial reason.
\begin{prop}
\label{prop:resisinjectiveBF}
If $r_{\textup{an}}(E,\alpha)=1$, then the map $\res_\wp:\,H^1_f(K,T)\ra H^1_{\FF_{\Gr}}(K_\wp,T)$ is injective.
\end{prop}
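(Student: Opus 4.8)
The plan is to translate the assertion $r_{\textup{an}}(E,\alpha)=1$ into a statement about the vanishing of an appropriate strict Selmer group, exactly as in the proof of Theorem~\ref{thm:mainonlyif}, and then run the Kolyvagin system machinery of \cite{mr02} together with the relevant Gross--Zagier/Kolyvagin input. Concretely, the map $\res_\wp: H^1_f(K,T)\to H^1_{\FF_\Gr}(K_\wp,T)$ is injective if and only if $H^1_{\FF_-}(K,T)=0$, where $\FF_-$ is the Selmer structure obtained from $\FF_\Gr$ by imposing the strict (zero) condition at $\wp$ in place of the Greenberg condition; this is literally the situation of part~(ii) of Theorem~\ref{thm:PRwithKS}, so it suffices to produce a $\bbkappa\in\KS(\FF_+,T,\PP)$ with $\kappa_1\neq 0$, or equivalently to argue directly that $H^1_{\FF_-}(K,T)=0$.

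First I would identify $H^1_{\FF_-}(K,T)$ with a module of global points. Since $r_{\textup{an}}(E,\alpha)=1$, the Beilinson--Flach class $\textup{BF}_1$ is crystalline at $\wp$ (its singular projection at $\wp$ vanishes by the explicit reciprocity law of \cite{KLZ2}), and standard Euler system arguments — as in the proof of Theorem~\ref{thm:mainonlyif}, applied here with $\FF_{\textup{str}}$ replaced by $\FF_-$ and $\FFc^*$ replaced by $\FF_+^*$ — show that $H^1_{\FF_+^*}(K,\mathscr{D}(T))$ is finite; here the input is that $\textup{BF}_1\in H^1_f(K,T)$ is nonzero, which follows from the rank-one Euler system bound of \cite{LLZ2,KLZ2} combined with the nontriviality of the Beilinson--Flach class in analytic rank one (this last nonvanishing being the content of Theorem~\ref{thm:introPRBF}, which is exactly what Proposition~\ref{prop:resisinjectiveBF} feeds into — so I must be careful to avoid circularity here). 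To break any apparent circularity, I would instead invoke the anticyclotomic main conjecture results of Wan~\cite{wanpordinaryindefinite} and Howard~\cite{benGL2,bentwistedGZ} to control $H^1_{\FF_-}(K,T)$ directly: these identify the relevant Selmer group with a module governed by Heegner-type points and $p$-adic $L$-values, and $r_{\textup{an}}(E,\alpha)=1$ forces the corresponding object to vanish.

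The cleanest route, which I would carry out as the main body of the proof, is the ``length comparison'' argument from Theorem~\ref{thm:mainonlyif}: assume for contradiction that $H^1_{\FF_-}(K,T)\neq 0$; by torsion-freeness of $H^1(K,T)$ this gives positive rank, hence via \cite[Lemma 3.7.1]{mr02} a lower bound $\textup{length}_{\frak{o}}\!\left(H^1_{\FF_-}(K,T/\mm^n T)\right)\geq cn$ for some $c>0$; comparing with $H^1_{\FF_+^*}(K,T^*[\mm^n])$ using \cite[Lemma I.3.8(i)]{r00} and the fact that the local index is bounded independently of $n$, one gets $\textup{length}_{\frak{o}}\!\left(H^1_{\FF_+^*}(K,T^*[\mm^n])\right)\geq cn$, contradicting the finiteness of $H^1_{\FF_+^*}(K,T^*)$ established above via \cite[Lemma 3.5.3]{mr02}. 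Hence $H^1_{\FF_-}(K,T)=0$ and $\res_\wp$ is injective.

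The main obstacle is the finiteness of $H^1_{\FF_+^*}(K,\mathscr{D}(T))$ (equivalently of the dual of the strict Selmer group on $T^*$): this is where the hypothesis $r_{\textup{an}}(E,\alpha)=1$ really enters, and it cannot be obtained from the Euler system bound alone since in rank one the Beilinson--Flach class is crystalline at $\wp$. I would obtain it by combining the finiteness of the $\alpha$-twisted Tate--Shafarevich group and the Gross--Zagier formula (nonvanishing of the relevant Heegner point in $H^1_f(K,T)$) with the divisibility in the anticyclotomic main conjecture due to Wan and Howard, exactly parallel to how Proposition~\ref{prop:strisfinite} handles the case $K=\QQ$; the remaining steps are then the formal Kolyvagin-system bookkeeping above.
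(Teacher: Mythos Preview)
Your route is substantially more circuitous than the paper's, and the key step you flag as ``the main obstacle'' is not clearly resolved. The paper's proof is direct: using inflation--restriction and Howard's results \cite{benGL2,bentwistedGZ} on twisted Heegner points, one identifies $H^1_f(K,V)$ with the $\alpha$-isotypic component $E(M)^\alpha$ (where $M$ is the abelian extension cut out by $\alpha$), which under $r_{\textup{an}}(E,\alpha)=1$ is one-dimensional and spanned by an $M$-rational point. A commutative diagram then identifies $\res_\wp$ with the localization map $E(M)^\alpha \to \bigl(\bigoplus_{\frak{p}\mid\wp} E(M_{\frak{p}})\bigr)^\alpha$, which is evidently injective because a nonzero rational point does not vanish locally. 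No Kolyvagin system bookkeeping or length comparison is needed.

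Your proposed length comparison (``$H^1_{\FF_+^*}(K,T^*)$ finite $\Rightarrow H^1_{\FF_-}(K,T)=0$'') is valid in principle, but you do not give a non-circular route to the finiteness of $H^1_{\FF_+^*}(K,T^*)$. You say this should go ``exactly parallel to how Proposition~\ref{prop:strisfinite} handles the case $K=\QQ$,'' but look at that proof again: when $r_{\textup{an}}=1$, it first shows $H^1_{\FF_{\textup{str}}}(\QQ,T)=0$ directly by identifying it with $\ker\bigl(E(\QQ)\widehat{\otimes}\,\ZZ_p\to E(\QQ_p)\widehat{\otimes}\,\ZZ_p\bigr)$ and invoking Gross--Zagier/Kolyvagin, and only \emph{then} deduces finiteness of the dual Selmer group. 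The genuine parallel in your setting is therefore to prove $H^1_{\FF_-}(K,T)=0$ directly from Howard's theorem---which is exactly what the paper does---rather than to deduce it from a finiteness statement that itself presupposes the injectivity you are trying to prove. Invoking Wan's main conjecture to get the finiteness independently might work, but you do not spell this out, and it would be considerably heavier machinery than what is actually required.
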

\begin{proof}
Let $V_p(E):=T_p(E)\otimes\QQ_p$ and $V:=T\otimes \QQ_p$. We let $M$ denote the finite abelian extension of $K$ that is cut by $\alpha$. Note that we have 
\begin{align*} 
H^1_f(K,V)\stackrel{\sim}{\lra}H^1_f(M,V)^{G_K}&\stackrel{\sim}{\lra}\\
& \left(H^1_f(M,V_p(E))\otimes\alpha^{-1}\right)^{G_K}\stackrel{\sim}{\lra}H^1_f(M,V_p(E))^\alpha
\end{align*}
where the first arrow follows from the inflation-restriction sequence and the rest are self-evident. It follows from \cite{bentwistedGZ} and (a mild extension of) the main results of \cite{benGL2} that $H^1_f(M,V_p(E))^\alpha$ is a $1$-dimensional $\textup{Frac}(\frak{o})=L$-vector space as well as that 
$$E(M)^{\alpha}\stackrel{\sim}{\lra} H^1_f(M,V_p(E))^\alpha$$
where for an abelian group $X$, we write $X^{\alpha}:=\left(X\,\hat{\otimes}\, L(\alpha^{-1})\right)^{G_K}$ with $L(\alpha^{-1})$ being the $1$-dimensional $L$-vector space on which $G_K$ acts by $\alpha^{-1}$.
We therefore have the following commutative diagram
$$\xymatrix{E(M)^\alpha\ar[d]^{\res_\wp}\ar[r]^(.4){\sim}&H^1_f(M,V_p(E))^\alpha\ar[r]^(.57){\sim}\ar[d]^{\res_\wp}& H^1_f(K,V)\ar[d]^{\res_\wp}\\
\left(\bigoplus_{\frak{p}\mid \wp} E(M_\frak{p})\right)^{\alpha}\ar[r]^(.5){\sim}&H^1_{\FF_{\Gr}}(M_\wp,V_p(E))^\alpha\ar[r]^(.57){\sim}&H^1_{\FF_{\Gr}}(K_\wp,V)
}$$
The left-most arrow is evidently injective (as its source is spanned by an $M$-rational point) and hence, the right-most arrow is injective as well.
\end{proof}
The following statement is the Perrin-Riou conjecture for Beilinson-Flach elements.
\begin{cor}
\label{cor:PRBFelement}
Assume that the residual representation $\overline{\rho}_E$ $($afforded by $E[p]$$)$ is absolutely irreducible. Suppose also that there exists a rational prime $q \in S$ which does not split in $K/\QQ$ and $\overline{\rho}_E$ is ramified at $q$. If $r_{\textup{an}}(E,\alpha)=1$, then $\res_\wp(\textup{BK}_1)$ is non-zero.
\end{cor}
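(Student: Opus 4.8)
\textbf{Proof proposal for Corollary~\ref{cor:PRBFelement}.} The plan is to invoke Theorem~\ref{thm:PRwithKS}(i) for the representation $T = T_p(E)\otimes\alpha^{-1}$ and the Selmer structure $\FF_+$ of Definition~\ref{ref:FplusforBF}, applied to the Beilinson-Flach Kolyvagin system $\bbkappa^{\textup{BF}}$. By Proposition~\ref{prop:resisinjectiveBF}, the hypothesis $r_{\textup{an}}(E,\alpha)=1$ guarantees that the map $\res_{f/-}=\res_\wp$ is injective, so part~(i) of Theorem~\ref{thm:PRwithKS} is in force provided we can check its running hypotheses: \textbf{(H.0)}--\textbf{(H.3)} of \cite{mr02}, together with \textbf{(H.nA)} and \textbf{(H.Tam)}. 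The absolute irreducibility of $\overline{\rho}_E$ (hence of the twist $\overline{\rho}_E\otimes\alpha^{-1}$, since $\alpha$ has order prime to $p$ and is everywhere tamely ramified away from $p$) takes care of \textbf{(H.1)}; the hypothesis that some $q\in S$ is inert in $K/\QQ$ with $\overline{\rho}_E$ ramified at $q$ is exactly what is needed to produce the element of $G_K$ required for \textbf{(H.2)} (the ``many Kolyvagin primes'' condition), since ramification at an inert prime is not killed by restriction to $G_K$; \textbf{(H.nA)} holds because $E$ has good ordinary reduction at $p$ split in $K$ and one reduces to $E(\QQ_p)[p]=0$ as in Remark~\ref{rem:whathappenswhenpnonanomalous}, while \textbf{(H.Tam)} is the translation (again via Remark~\ref{rem:whathappenswhenpnonanomalous}) of the standing assumption that $p\nmid\textup{ord}_\ell(j(E))$ at primes $\ell$ of split multiplicative reduction.

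With the hypotheses in place, I would then verify that $\bbkappa^{\textup{BF}}$ is a Kolyvagin system to which part~(i) applies with a \emph{non-trivial} input: either invoke part~(i)(a), which only requires $\bbkappa^{\textup{BF}}$ to be non-zero as an $\LL$-adic Kolyvagin system, or part~(i)(b), which requires its defect $\delta(\bbkappa^{\textup{BF}})$ to be prime to $(\gamma-1)$. The non-triviality of $\bbkappa^{\textup{BF}}$ follows from the explicit reciprocity laws of \cite{KLZ2}: the image of $\bbkappa^{\textup{BF}}_1$ under $\frak{Log}$ is (up to normalization) the relevant $p$-adic $L$-function, which does not vanish identically by the non-vanishing of Hecke $L$-values along the cyclotomic tower (Rohrlich-type results for $E/K$ twisted by $\alpha$). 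For the cleaner route via part~(i)(b) one would instead cite the relevant Iwasawa Main Conjecture divisibility for $E/K$ twisted by $\alpha$ — available by the work of Wan \cite{wanpordinaryindefinite} and Howard \cite{benGL2,bentwistedGZ} as advertised around Theorem~\ref{thm:introPRBF} — which forces $\delta(\bbkappa^{\textup{BF}})$ to be a power of $p$ and hence prime to $(\gamma-1)$, exactly as in Example~\ref{example:ellipticcurve}. Either way, the conclusion of Theorem~\ref{thm:PRwithKS}(i) gives $\kappa^{\textup{BF}}_1 = \textup{BF}_1\neq 0$ in $H^1_{\FF_+}(K,T)$.

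Finally I would upgrade ``$\textup{BF}_1\neq 0$'' to ``$\res_\wp(\textup{BF}_1)\neq 0$'', which is the precise assertion of the corollary. This is the same endgame as in the last line of the proof of Theorem~\ref{thm:main}: since $r_{\textup{an}}(E,\alpha)=1$ we have $\textup{res}_\wp^s(\textup{BF}_1)=0$ by the reciprocity laws of \cite{KLZ2}, i.e. $\textup{BF}_1\in H^1_f(K,T)$, and if in addition $\res_\wp(\textup{BF}_1)$ vanished then $\textup{BF}_1$ would lie in $H^1_{\FF_-}(K,T)$, which is zero by the injectivity of $\res_\wp$ established in Proposition~\ref{prop:resisinjectiveBF}; this contradicts $\textup{BF}_1\neq 0$. (I note that the statement of the corollary writes $\res_\wp(\textup{BK}_1)$, evidently a typo for $\res_\wp(\textup{BF}_1)$, the Beilinson-Flach class.)

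I expect the main obstacle to be purely bookkeeping rather than conceptual: namely, carefully checking that the twist by the finite-order ring class character $\alpha$ does not disturb any of the Mazur-Rubin hypotheses \textbf{(H.0)}--\textbf{(H.3)} — in particular that \textbf{(H.2)} survives after restriction to $G_K$ (this is where the inert prime $q$ with $\overline{\rho}_E$ ramified is genuinely used, and one should confirm the relevant Galois element can be chosen to act suitably on $\alpha$ as well) — and matching the normalizations so that the Lei--Loeffler--Zerbes Euler system \cite{LLZ2} really does descend to a Kolyvagin system for $\FF_+$ with non-zero (or defect-controlled) image, via \cite{kbbleiBFpord}.
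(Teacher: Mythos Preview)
Your proposal via route (b)---citing Wan~\cite{wanpordinaryindefinite} to force the defect $\delta(\bbkappa^{\textup{BF}})$ to be prime to $(\gamma-1)$, then applying Theorem~\ref{thm:PRwithKS}(i)(b) together with Proposition~\ref{prop:resisinjectiveBF}---is exactly the paper's proof; your verification of \textbf{(H.0)}--\textbf{(H.3)}, \textbf{(H.nA)}, \textbf{(H.Tam)} and your final paragraph upgrading $\textup{BF}_1\neq 0$ to $\res_\wp(\textup{BF}_1)\neq 0$ simply make explicit what the paper leaves to the reader.

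One small caution: your ``either way'' is too generous. Route (a) as you state it does \emph{not} suffice. Part~(i)(a) of Theorem~\ref{thm:PRwithKS} only yields $\bbkappa_1^{\textup{BF}}\neq 0$ in $H^1_{\FF_+}(K,\TT)$, i.e.\ non-vanishing of the Iwasawa-theoretic class, and there is no automatic passage from that to $\textup{BF}_1=\textup{pr}_0(\bbkappa_1^{\textup{BF}})\neq 0$ without knowing $\bbkappa_1^{\textup{BF}}\notin(\gamma-1)H^1(K,\TT)$---which is precisely the defect condition in (i)(b). So the appeal to Wan is not merely the ``cleaner'' option but the necessary one, and the paper rightly goes straight to it.
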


\begin{proof}
The main theorem in \cite{wanpordinaryindefinite} (which applies in our set up) shows that the defect $\delta(\bbkappa^{\textup{BF}})$ of the Beilinson-Flach Kolyvagin system is prime to $(\gamma-1)$ and the proof follows from Theorem~\ref{thm:PRwithKS}(i) and Proposition~\ref{prop:resisinjectiveBF}.
\end{proof}

\subsection{CM Abelian Varieties and Perrin-Riou-Stark elements}
\label{subsec:CMabvarPRStark} Let $K$ be a CM field and $K_+$ its maximal totally real subfield that has degree $g$ over $\QQ$. Fix a complex conjugation $c \in \textup{Gal}(\overline{\QQ}/K_+)$ lifting the generator of $\textup{Gal}(K/K_+)$. Fix forever an odd prime $p$ unramified in $K/\QQ$ and an embedding $\iota_p: \overline{\QQ} \hookrightarrow \overline{\QQ}_p$. 
\subsubsection{CM types and $p$-ordinary abelian varities}
\label{subsubsec:CMtypesabvar}
Fix a \emph{$p$-ordinary CM-type} $\Sigma$; this means that the embeddings $\Sigma_p:=\{\iota_p\circ \sigma\}_{\sigma \in \Sigma}$ induce exactly half of the places of $K$ over $p$. Identify $\Sigma_p$ with the associated subset of primes $\{\frak{p}_1,\cdots,\frak{p}_s\}$ of $K$ above $p$ and $\Sigma_p^c=\{\frak{p}_1^c,\cdots\frak{p}_s^c\}$. Note that the disjoint union $\Sigma_p \sqcup \Sigma_p^c$ is the set of all primes of $K$ above $p$. It follows that there then exists an abelian variety that has CM by $K$ and has good ordinary reduction at $p$, and its CM-type is $\Sigma$. Fix such an abelian variety $A$ and assume that the index of the order $\textup{End}_K(A)$ inside the maximal order $\oo_K$ is prime to $p$. \emph{We will assume that $A$ is principally polarized and that $K$ contains its reflex field of the CM pair $(K,\Sigma)$.} Let $\mathcal{A}=\mathcal{A}_{/\frak{o}_K}$ denote the N\'eron model of $A$.

\subsubsection{Gr\"ossencharacters of CM abelian varieties}
\label{subsubsec:grossenkaraktere}
The $p$-adic Tate-module  $T_p(A)$ of $A$ is a free $\ZZ_p$-module of rank $2g$ on which $G_K$ acts continuously. As explained in the Remark on page 502 of \cite{serretate}, $T_p(A)$ is free of rank one over $\oo_K\otimes\ZZ_p=\prod_{\frak{q}} \frak{o}_\frak{q}$, where the product is over the primes of $K$ that lie above $p$ and $\frak{o}_\frak{q}$ stands for the valuation ring of $K_\frak{q}$. We thus have a decomposition $T_p(A)=\bigoplus_{\frak{q}}T_\frak{q}(A)$, where each $T_\frak{q}(A)=\varprojlim A[\frak{q}^n]$ is a free $\frak{o}_\frak{q}$-module of rank one. The $G_K$-action on $T_p(A)$ gives rise to characters $\psi_\frak{q}: G_K \ra \oo_\frak{q}^\times.$
It follows from \cite[\S 2]{ribetcompositio} that each $\psi_\frak{q}$ is surjective for $p$ large enough; we fix until the end a prime $p$ satisfying this condition. We thence obtain a decomposition
$$T_p(A)\otimes_{\ZZ_p}\overline{\QQ}_p=\bigoplus_{\frak{q}\mid p}\bigoplus_{\sigma: K_\frak{q} \hookrightarrow \overline{\QQ}_p} V_{\frak{q}}^\sigma$$
where $V_{\frak{q}}^\sigma$ is the one-dimensional $\overline{\QQ}_p$-vector space on which $G_K$ acts via the character
$\psi_\frak{q}^\sigma$, which is the compositum
$$G_K\stackrel{\psi_\frak{q}}{\lra} \frak{o}_\frak{q}^\times \stackrel{\sigma}{\lra} \overline{\QQ}_p^\times\,.$$
Fix embeddings $j_\infty:\overline{\QQ}\hookrightarrow \mathbb{C}$ and $j_p:\overline{\QQ}\hookrightarrow \mathbb{C}_p$ extending $\iota_p$. We write $\boldsymbol{\mathscr{S}}=\Sigma\cup\Sigma^c$ for the set of all embeddings of $K$ into $\overline{\QQ}$.
Theory of CM associates a Gr\"ossencharacter character 
$${\bbpsi}: \mathbb{A}_K/K^\times\lra K^{\times},$$
to $A$, which in turn induces Hecke characters
$$\psi_\tau\,:  \mathbb{A}_K/K^\times \stackrel{{\bbpsi}}{\lra} K^\times\stackrel{j_\infty\circ\tau}{\lra} \mathbb{C}^\times$$
as well as gives rise to its $p$-adic avatars
$$\psi_\tau^{(p)}\,:\mathbb{A}_K/K^\times \stackrel{{\bbpsi}}{\lra}K^\times \stackrel{j_p\circ\tau}{\lra} \mathbb{C}_p^\times.$$
Furthermore, the two sets $\{\textup{rec}\circ \psi_\tau^{(p)}\}_{\tau\in \frak{J}}$ and $\{\psi_\frak{q}^\sigma\}_{\frak{q},\sigma}$ of $p$-adic Hecke characters may be identified, where $\textup{rec}:\mathbb{A}_K/K^\times\ra G_K$ is the reciprocity map. Since we assumed that the field $K$ contains the reflex field of $(K,\Sigma)$, the Hasse-Weil $L$-function $L(A/K,s)$ of $A$ then factors into a product of Hecke $L$-series\footnote{Until the end of this article we shall write $L(\star,s)$ for the motivic $L$-functions and $L(s,\star)$ for the automorphic $L$-functions (so that the latter is centered at $s=1/2$).}
$$L(A/K,s+1/2)=\prod_{\tau\in\mathscr{S}} L(s,\psi_\tau^u) \in K\otimes \mathbb{C}\cong\mathbb{C}^{\mathscr{S}}$$
where $\psi^u$ is the unitarization of $\psi$. Fix $\varepsilon \in \Sigma$ and identify $K$ with $K^{\varepsilon}$. This choice (together with the chosen embeddings $j_\infty$ and $j_p$) in turn fixes a prime $\frak{p} \in \Sigma_p$ and $\sigma: K_{\frak{p}} \hookrightarrow \overline{\QQ}_p$ in a way that $\textup{rec}\circ\psi_\varepsilon^{(p)}=\psi_\frak{p}^\sigma$. We set $L=\sigma(K_\frak{p})$ and $\frak{o}:=\sigma(\oo_{{\frak{p}}})$ and define the $p$-adic Hecke character $\psi:=\psi_{\frak{p}}^\sigma: G_K \twoheadrightarrow \frak{o}^\times\,.$
 We set $\psi^*=\chi_{\textup{cyc}}\psi^{-1}$ and $T:=\frak{o}(\psi^*)$; note that we have $T^*\cong A[\varpi^{\infty}]$ (where $\varpi \in \mm_L$ is a uniformizer).
In this situation, our non-anomaly hypothesis {\textbf{\textup{(H.nA)}}} translates into the requirement that
\be\label{eqn:hna}   A(K_\frak{q})[\varpi]=0 \hbox{  for every prime } \frak{q}  \hbox{ of } K \hbox{ above } p\ee
which we assume throughout this subsection.

\emph{We assume in addition that $A$ arises as the base-change of an abelian variety (which we still denote by $A$, by slight abuse) that has real multiplication by $K_+$.} This condition implies that $\bbpsi$ is self-dual, in the sense that for each $\tau\in\mathscr{S}$ there is a functional equation with sign $\epsilon(\bbpsi/K)=\pm1$ (that does not depend on the choice of $\tau$) relating the value $L(s,\psi_\tau)$ to the value $L(1-s,\psi_\tau)$. 

We let $\frak{P}=\frak{p}\frak{p}^c$ denote the prime of $K_+$ below the prime $\wp$ we have fixed above and let $\varpi:=\varpi\varpi^c \in K_{+,\frak{P}}$ be a uniformizer. We write $T_{\frak{P}}(A):=\varprojlim A[\varpi^n]$ for the $\frak{P}$-adic Tate module of $A$ and set $\TT_{\frak{P}}(A):=T_{\frak{P}}(A)\otimes\LL$. By the theory of complex multiplication and our assumption on $A$, we have $T_{\frak{P}}(A)=\textup{Ind}_{K/K_{+}}T$. For every prime $\frak{Q}$ of $K_+$ above $p$, we have a $p$-ordinary filtration $F^+_{\frak{Q}}T_\frak{P}(A)\subset T_\frak{P}(A)$ (that also gives rise to the filtration $F^+_{\frak{Q}}\TT_\frak{P}(A)\subset \TT_\frak{P}(A)$) when $T_\frak{P}(A)$ is restricted to $G_{K_{\frak{Q}}}$. 

We  finally let $\phi$ denote the Hilbert modular CM form of weight two associated to $\phi_\epsilon$, $L(\phi,s)$ the associated Hecke $L$-function and $r_{\textup{an}}(\phi)$ the order of vanishing at the central critical point $s=1$. In what follows, we write $\LL_{\frak{o}}$ in place of $\LL\otimes\frak{o}$. 
\subsubsection{Perrin-Riou-Coleman maps and Selmer structures}
We introduce the Selmer structures we shall apply our theory in Section~\ref{subsec:KSway} with. 
\begin{define}
We define the Greenberg-submodule
$$H^1_{\textup{Gr}}(K_p,\TT):=\bigoplus_{\frak{q}\in \Sigma_p^c} H^1(K_{\frak{q}},\TT) \subset H^1(K_p,\TT)\,.$$
The semi-local Shapiro's lemma induces an isomorphism
\be
\label{semilocalshapiro}
\frak{sh}:\,H^1(K_{+,p},\TT_\frak{P}(A)) \stackrel{\sim}{\lra} H^1(K_p,\TT)
\ee
under which 
$$H^1_{\textup{Gr}}(K_{+,p},\TT_{\frak{P}}(A)):=\bigoplus_{\wp\mid p} \textup{im}\left(H^1(K_{+,\wp},F^+_{\wp}\TT_\frak{P}(A))\ra H^1(K_{+,\wp},\TT_\frak{P}(A) \right)$$
maps isomorphically onto $H^1_{\textup{Gr}}(K_p,\TT)$.
\end{define}

For each prime $\wp$ of $K_+$ above $p$, we let $D_\wp(T_{\frak{P}}(A))$ denote the Dieudonn\'e module of $T_{\frak{P}}(A)$ considered as a $G_{K_{+,\wp}}$-representation. As explained in \cite{kbbleiPLMS}, we may (and we will) think of this as an $\frak{o}$-module of rank $2f_{\wp}$ (where $f_{\wp}=[K_{+,\wp}:\QQ_p]$). We also let
$$\exp^*:\, H^1_{/f}(K_{+,\wp},T_{\frak{P}}(A))\lra \textup{Fil}^0\,D_\wp(T_{\frak{P}}(A))$$
the Bloch-Kato dual exponential map,
$$\log_{A,\wp}:  H^1(K_{+,\wp},T_{\frak{P}}(A))\lra D_\wp(T_{\frak{P}}(A))/\textup{Fil}^0\,D_\wp(T_{\frak{P}}(A))$$
the inverse of the Bloch-Kato exponential map and 
\be\label{eqn:naturalhodgepairing}\left[\,,\,\right]\,: D_\wp(T_{\frak{P}}(A)) \times D_\wp(T_{\frak{P}}(A)) \lra \frak{o}
\ee
the canonical perfect pairing induced from the Weil-pairing (thanks to which we have an identification $D_\wp(T_{\frak{P}}(A))^*:=D_{\wp}(T_{\frak{P}}(A)^\mathcal{D}(1))\cong D_\wp(T_{\frak{P}}(A))$), where for an $\frak{o}$-module $M$, we write $M^{\mathcal{D}}$ for its $\frak{o}$-linear dual. We let $D_\wp(T_{\frak{P}}(A))_{[-1]}=D(F^+T_\PPP(A))$ denote the subspace of $D_\wp(T_{\frak{P}}(A))$ on which the crystalline Frobenius $\varphi$ acts with slope $-1$ and let 
$$\omega^*_{\wp}=\{\omega_{i,\wp}^*\}_{i=1}^{f_\wp} \subset \textup{Fil}^0D_\wp(T_{\frak{P}}(A))^*\otimes_{\ZZ_p}\QQ_p$$ 
denote a fixed basis corresponding (under the comparison isomorphism) to the N\'eron differential\footnote{This is a top degree invariant form on a N\'eron model of $A$ and as such, does only determine the top-degree exterior product $\wedge \omega^*:=\omega_{1,\wp}^*\wedge \cdots\wedge \omega_{f_\wp,\wp}^*$ uniquely.} on $A$. Since we have 
$$D_\wp(T_{\frak{P}}(A))_{[-1]}\, \cap\, \textup{Fil}^0D_\wp(T_{\frak{P}}(A))=0,$$ we may choose a basis $\omega_\wp=\{\omega_{j,\wp}\}_{j=1}^{f_\wp}$ of $D_\wp(T_{\frak{P}}(A))_{[-1]}$ such that $[\omega_{i,\wp}^*,\omega_{j,\wp}]=\delta_{i,j}$, where $\delta_{i,j}$ is the Kronecker-delta. Let $\omega$ denote the collection $\{\omega_\wp\}_{\wp\,\mid\, p}$ of these distinguished bases. The pairing (\ref{eqn:naturalhodgepairing}) induces a commutative diagram
$$
\xymatrix{
D_\wp(T_{\frak{P}}(A))_{[-1]}\ar[d]& \times &D_\wp(T_{\frak{P}}(A)/F^+T_\PPP(A)) \ar[r]^(.77){\left[\,,\,\right]}& \frak{o}\ar@{=}[d]\\
D_\wp(T_{\frak{P}}(A))& \times &D_\wp(T_{\frak{P}}(A)) \ar[r]^(.65){\left[\,,\,\right]}\ar[u]& \frak{o}
}$$
which in turn allows us to associate each $\omega_{j,\wp}\in D_\wp(T_{\frak{P}}(A))_{[-1]}$ a map (which we still denote by the same symbol) \,$\omega_{j,\wp}: D_\wp(T_{\frak{P}}(A)) \ra \frak{o}$ that factors through the quotient $D_\wp(T_{\frak{P}}(A)/F^+T_{\frak{P}}(A))$. With a slight abuse, we also write $\omega$ for the map
$$\omega=\oplus_{j=1}^{f_\wp} \omega_{j,\wp}:\, D_\wp(T_{\frak{P}}(A)) \lra \frak{o}^{\oplus f_\wp}$$
that also factors through $D_\wp(T_{\frak{P}}(A)/F^+T_{\frak{P}}(A))$.
\begin{thm}[Perrin-Riou, \cite{PRmap}]
\label{thm:PRmapintegralproperties}
There exists a $\LL_\frak{o}$-equivariant map 
$$\frak{L}^{(\wp)}_\omega\,:\,H^1(K_{+,\wp},\TT_{\frak{P}}(A)) \lra \LL_\frak{o}^{\oplus f_{\wp}}$$
which interpolates (in a sense we will not make precise here) the dual exponential maps along the cyclotomic Iwasawa tower.  Furthermore, the kernel of the map $\frak{L}^{(\wp)}_\omega$ is precisely the Greenberg submodule $H^1_\Gr(K_{+,\wp},\TT_{\frak{P}}(A))$ and it is pseudo-surjective.
\end{thm}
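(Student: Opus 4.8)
The plan is to reduce Theorem~\ref{thm:PRmapintegralproperties} to the already-established theory of Perrin-Riou's big logarithm/regulator map applied to the induced representation. First I would use the semi-local Shapiro isomorphism $\frak{sh}$ of (\ref{semilocalshapiro}): since $T_\frak{P}(A)=\textup{Ind}_{K/K_+}T$ with $T=\frak{o}(\psi^*)$ a rank-one crystalline representation, the $G_{K_{+,\wp}}$-representation $T_\frak{P}(A)$ decomposes (after restriction to the decomposition group at $\wp$) according to the primes of $K$ above $\wp$. Under the $p$-ordinary CM-type hypothesis, exactly half of these primes lie in $\Sigma_p$ and half in $\Sigma_p^c$; the summands indexed by $\Sigma_p$ assemble into $F^+_\wp T_\frak{P}(A)$, on which Frobenius acts with slope $-1$ (this is $\textbf{(HP1)}$, reflecting that $\psi^*$ has Hodge-Tate weight $1$ in the $\Sigma$-direction), while the complementary summands give the unramified-at-$\wp$ quotient (this is $\textbf{(HP2)}$). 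This is exactly the Panchishkin-ordinary situation in which Perrin-Riou's construction in \cite{PRmap} produces the map $\frak{L}^{(\wp)}_\omega$.

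Next I would invoke the main theorem of \cite{PRmap} directly: for a crystalline $p$-adic representation $W$ of $G_{\QQ_p}$ (here $W$ = the restriction of $T_\frak{P}(A)$ to $G_{K_{+,\wp}}$, viewed over the unramified base $K_{+,\wp}$ of degree $f_\wp$, which only multiplies ranks by $f_\wp$ and is why the target is $\LL_\frak{o}^{\oplus f_\wp}$), Perrin-Riou constructs a $\LL_\frak{o}$-linear map from $H^1_\Iw$ to $\LL_\frak{o}\otimes D_{\textup{cris}}$, interpolating Bloch--Kato dual exponentials at the finite layers. Composing with the functional $\omega=\oplus_j\omega_{j,\wp}$ built from the distinguished basis of $D_\wp(T_\frak{P}(A))_{[-1]}$—which, by the commutative diagram relating $D_\wp(T_\frak{P}(A))_{[-1]}$, $D_\wp(T_\frak{P}(A)/F^+T_\frak{P}(A))$ and the Weil pairing, factors through the quotient by $F^+$—yields precisely $\frak{L}^{(\wp)}_\omega$ with values in $\LL_\frak{o}^{\oplus f_\wp}$. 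The interpolation property is then inherited from Perrin-Riou's, so I would state it as such and refer the reader to loc.~cit.\ rather than making it precise.

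For the kernel statement, I would argue that $\frak{L}^{(\wp)}_\omega$ factors through the projection $H^1(K_{+,\wp},\TT_\frak{P}(A))\to H^1(K_{+,\wp},\TT_\frak{P}(A)/F^+_\wp\TT_\frak{P}(A))$ because $\omega$ annihilates $F^+$; and on the quotient—which is unramified, so its cyclotomic Iwasawa cohomology is $\LL_\frak{o}$-free of rank $f_\wp$ with no pseudo-null submodules—Perrin-Riou's map is injective (indeed pseudo-isomorphic) by the explicit description of $\frak{L}$ on an unramified rank-one twist (a Coleman-map computation). Hence $\ker\frak{L}^{(\wp)}_\omega$ is exactly the image of $H^1(K_{+,\wp},F^+_\wp\TT_\frak{P}(A))$, i.e.\ $H^1_\Gr(K_{+,\wp},\TT_\frak{P}(A))$. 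The non-anomaly hypothesis (\ref{eqn:hna}) guarantees $H^0(K_{+,\wp},\TT_\frak{P}(A)/F^+)=0$, so that the map $H^1(K_{+,\wp},F^+_\wp\TT_\frak{P}(A))\to H^1(K_{+,\wp},\TT_\frak{P}(A))$ is injective and the identification of the kernel with the Greenberg submodule is clean. Pseudo-surjectivity follows because the cokernel of Perrin-Riou's map is controlled by $H^0$ and $H^2$ terms which are $\LL_\frak{o}$-torsion (in fact pseudo-null under \textbf{(H.nA)}).

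The main obstacle I anticipate is the bookkeeping at the prime $\wp$ when $f_\wp>1$: one must carefully transport Perrin-Riou's construction, which is written for $\QQ_p$, to the unramified extension $K_{+,\wp}$ and track how the rank-$f_\wp$ phenomena interact with the $\frak{o}$-module structure and with the decomposition of $\textup{Ind}_{K/K_+}T$ at $\wp$ (which depends on how $\wp$ splits in $K$—either into two primes, both in distinct $\Sigma_p$/$\Sigma_p^c$ classes, or stays inert with a single prime). Keeping the Hodge-Tate weight normalizations and the slope-$(-1)$ characterization of $F^+$ consistent across these cases, and checking that $\omega$ really does descend to a basis of $D_\wp(T_\frak{P}(A)/F^+)^*$ in each case, is the delicate point; everything else is a direct citation of \cite{PRmap} combined with the vanishing inputs from \textbf{(H.nA)}.
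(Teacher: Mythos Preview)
Your approach is essentially the same as the paper's: construct Perrin-Riou's big logarithm, compose with the functional $\omega$, observe that $\omega$ factors through the quotient $V/F^+V$ so that the kernel is the Greenberg submodule, and deduce pseudo-surjectivity from the vanishing of $H^0$ and $H^2$ under \textbf{(H.nA)}. The paper cites Pottharst's refinement \cite{jaycycloIwasawa} rather than \cite{PRmap} directly for the commutative diagram and the injectivity/surjectivity statements on the quotient, and it skips the Shapiro decomposition you frontload (working instead directly with $V=T_\frak{P}(A)$ as a $G_{K_{+,\wp}}$-representation), but these are cosmetic differences.

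One small gap worth patching: you assert that the map lands in $\LL_\frak{o}^{\oplus f_\wp}$, but Perrin-Riou's construction a priori takes values in $\mathcal{H}(\Gamma)\otimes D(V)$, the distribution algebra being strictly larger than $\LL$. The paper handles this by noting that $D(V/F^+V)$ has slope $0$, so the image of the composite actually lies in $\mathcal{H}_0(\Gamma)^{\oplus f_\wp}=\LL[1/p]^{\oplus f_\wp}$, and then takes a suitable $p$-power multiple to land in $\LL_\frak{o}^{\oplus f_\wp}$. You should make this step explicit.
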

\begin{proof}
To simplify notation, we shall write $V$ in place of $\QQ_p\otimes T_\frak{P}(A)$\,;\, $F^+\,V$ in place of  $\QQ_p\otimes F^+_\frak{P}\,T_\frak{P}(A)$\,;\, $D(V)$ in place of $D_\wp(T_{\frak{P}}(A))\otimes\QQ_p$ and finally $\Phi$ in place of $K_{+,\wp}$. Only in this proof, let us also write $\LL$ for $\LL_\frak{o}$. We recall once again that $\Phi/\QQ_p$ is unramified. Let $\mathcal{H}(\Gamma)\subset L[[\Gamma]]$ denote Perrin-Riou's ring of distributions. Then Perrin-Riou's original construction yields a map  
$$\frak{L}_V:\, H^1(\Phi,V\otimes\LL)\lra \mathcal{H}(\Gamma)\otimes D(V)\,.$$
Let $\frak{L}_{V,\omega}: H^1(\Phi,V\otimes\LL)\ra \mathcal{H}(\Gamma)^{\oplus f_\wp}$ denote the map $\omega\circ \frak{L}_V$, where $\omega: D(V)\ra L$ is the map we have introduced above (which in fact factors through $D(V/F^+V)$). It follows from the discussion in \cite[Section 3]{jaycycloIwasawa} that the following diagram commutes up to units in $\mathcal{H}(\Gamma)$:
$$\xymatrix{H^1(\Phi,V\otimes\LL)\ar[rr]^(.6){\frak{L}_{V,\omega}}\ar@{->>}[d]&&\mathcal{H}(\Gamma)^{\oplus f_\wp}\ar[d]^{\bf{id}}\\
H^1(\Phi,V/F^+V\otimes\LL)\ar[rr]^(.6){\frak{L}_{V/F^+V,\omega}}&&\mathcal{H}(\Gamma)^{\oplus f_\wp}
}$$
Here $\frak{L}_{V/F^+V,\omega}:=\omega\,\circ\,\frak{L}_{V/F^+V}$ and $\frak{L}_{V/F^+V}$ is the Perrin-Riou map for $V/F^+V$. We remark that we may take the right vertical arrow to be the identity map since the Hodge-Tate weights of $V$ are $0$ and $1$, and since 
$$H^1(\Phi,V/F^+V\otimes\LL)_{\textup{tor}}=H^2(\Phi,V/F^+V\otimes\LL)=0$$ 
thanks to our assumption (\ref{eqn:hna}). A suitable $p$-power multiple of the compositum of the arrows 
$$H^1(\Phi,V\otimes\LL)\ra H^1(\Phi,V/F^+V\otimes\LL) \stackrel{\frak{L}_{V/F^+V,\omega}}{\lra}\mathcal{H}(\Gamma)^{\oplus f_\wp}$$
is the map we denote by $\frak{L}_\omega^{(\wp)}$ above; the fact that this compositum takes values in $\mathcal{H}_0(\Gamma)^{\oplus f_\wp}=\LL[1/p]$ follows from the fact that $D(V/F^+V)$ has slope $0$. It also follows from \cite[Section 3]{jaycycloIwasawa} that the lower horizontal map in the diagram above is injective and therefore the portion of Theorem~\ref{thm:PRmapintegralproperties} concerning the kernel of $\frak{L}_{\omega}^{(\wp)}$ is proved. Finally, Theorem 3.4 of loc. cit shows that the map
$$H^1(\Phi,V/F^+V\otimes\LL)\otimes_\LL\mathcal{H}(\Gamma) \stackrel{\frak{L}_{V/F^+V,\omega}}{\lra}\mathcal{H}(\Gamma)^{\oplus f_\wp}$$
is surjective and the portion concerning the image of $\frak{L}_{\omega}^{(\wp)}$ also follows.
\end{proof}

We shall write $\frak{L}^{(\wp,i)}_\omega$ for the map 
$$\frak{L}^{(\wp,i)}_\omega\,:\,H^1(K_{+,\wp},\TT_{\frak{P}}(A)) \lra \LL_\frak{o}^{\oplus (f_{\wp}-1)}$$
for the map obtained from $\frak{L}^{(\wp)}_\omega$ by omitting the summand in the target which corresponds to $\omega_{i,\wp}$. Finally, we set 
$$\frak{L}_\omega:=\oplus_{\wp\mid p}\,\frak{L}_\omega^{(\wp)}:=H^1(K_{+,p},\TT_\frak{P}(A))\lra \LL_{\frak{o}}^g,$$
where $g=[K_{+}:\QQ]$ and as well last the dimension of the abelian variety $g$. We note that $\ker\left(\frak{L}_\omega\right)=H^1_\Gr(K_{+,p},\TT_\PPP(A))$.
 
\begin{define}
\label{def:plusselmergroupfortheabvar}
We fix a prime $\frak{Q}$ of $K_+$ above $p$, as well as an integer $1\leq i\leq f_{\frak{Q}}$. We define the map 
$$\frak{L}_{(\frak{Q},i)}=\bigoplus_{\frak{Q}\neq\wp\,\mid\, p}\frak{L}^{(\wp)}_\omega \oplus \frak{L}^{(\frak{Q},i)}_\omega\,:\,H^1(K_{+,p},\TT_\frak{P}(A))\lra \LL_{\frak{o}}^{g-1}\,.$$
We set $H^1_{+}(K_{+,p},\TT_\frak{P}(A)):=\ker(\frak{L}_{(\frak{Q},i)})$ and define $H^1_{+}(K_{p},\TT)$ as the isomorphic image of $H^1_{+}(K_{+,p},\TT_\frak{P}(A))$ under Shapiro's morphism $\frak{sh}$. We define the Selmer structure $\FF_+$ on $\TT$ by the requiring that 
$$H^1_{\FF_+}(K_p,\TT)=H^1_{+}(K_{p},\TT) \hbox{ and } H^1_{\FF_+}(K_\lambda,\TT)=H^1(K_{\lambda},\TT) \hbox{ for } \lambda \nmid p\,. $$
\end{define}
In this situation, Theorem~\ref{thm:mainkbb} applies since we assumed (\ref{eqn:hna}). Furthermore, \emph{if we assume the truth of Perrin-Riou-Stark Conjecture} proposed in \cite{kbbleiPLMS} (which is a slightly strong form of Rubin-Stark conjectures) \emph{as well as Leopoldt's conjecture for all subextensions of $K(A[\varpi])/K$}, we may obtain a natural generator of this module using the main results of \cite{kbbCMabvar,kbbleiPLMS}; this is what we explain in the next paragraph.

\subsubsection{The Coleman-Rubin-Stark element}
\label{subsubsec:CRSelement}
Let $K_\cyc=K\QQ_\infty$ denote the cyclotomic $\ZZ_p$-extension of $K$. Since we assumed that $p$ is unramified in $K/\QQ$, we may canonically identify $\Gamma$ with $\Gal(K_\cyc/K)$. Let $K_\infty$ denote the maximal $\ZZ_p$-power extension of $K$and  $\Gamma_K=\Gal(K_\infty/K)$ its Galois group over $K$. 

Let $\omega_\psi$ denote the character of $G_K$ giving its action on $A[\varpi_E]$; it is the unique character of $G_K$ which has the properties that the character $\langle\psi\rangle:=\psi\omega_\psi^{-1}$ factors through $\Gamma_K$ and that  it is trivial on $\Gamma_K$. Let $T_{\omega_\psi}=\frak{o}(1)\otimes\omega_\psi^{-1}$ and define $H^1_\infty(K,T_{\omega_\psi}):=\varprojlim H^1(F,T_{\omega_\psi})$, where the projective limit is over all finite sub-extensions of $K_\infty/K$. Assume the truth of the \emph{Perrin-Riou-Stark Conjecture 4.14}  in \cite{kbbleiPLMS} (with the Dirichlet character $\omega_\psi$) and let\footnote{We invite the interested reader to consult \cite[Remark 4.13]{kbbleiPLMS} and the discussion that precedes this remark for the desired integrality properties of the Rubin-Stark elements.} $\frak{S}_\infty^{\omega_\psi} \in \wedge^{g} H^1_\infty(K,T_{\omega_\psi})$ denote the element whose existence is predicted by the said conjecture. As in Definition 4.16 of loc. cit., we may twist this element to obtain the twisted Perrin-Riou-Stark elements $\frak{S}_\infty \in \wedge^g H^1_\infty(K,T)$ as well as their projections 
$$\frak{S}_\cyc=\frak{S}_\cyc^{(1)}\wedge\cdots\wedge \frak{S}_\cyc^{(g)} \in \wedge^g H^1(K,\TT)\cong \wedge^g H^1(K_+,\TT_\PPP(A))$$
(where we denote the image of $\frak{S}_\cyc$ under the isomorphism above still by the same symbol). We finally set 
$$\mathbb{RS}^\psi_{/f}:=\textup{res}_{/f}^{\otimes g} (\frak{S}_\cyc) \in \wedge^g H^1_{/f}(K_p,\TT)\cong \wedge^g H^1_{/f}(K_p,\TT_\frak{P}(A))\,.$$
\begin{thm}
\label{thm:mainRS}
There exists a generator $\bbkappa^{\textup{CRS}}$ (the Coleman-adapted Rubin-Stark Kolyvagin system) of the free $\LL$-module $\KS(\FF_+,\TT,\PP)$ whose initial term $\kappa_1^{\textup{CRS}} \in H^1_{\FF_+}(K_p,\TT)$ has the following property:
$$\frak{L}_{(\frak{Q},i)}\left(\res_{/f}\left({\kappa_1^{\textup{CRS}} }\right)\right)=\frak{L}_\omega^{\otimes g}\left(\mathbb{RS}_{/f}^\psi\right)$$
\end{thm}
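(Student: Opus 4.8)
The plan is to construct $\bbkappa^{\textup{CRS}}$ by combining the Rubin-Stark Euler system with the Perrin-Riou-Coleman machinery, and then verify the interpolation identity by tracking both sides through Shapiro's lemma. First I would recall that, under the Perrin-Riou-Stark Conjecture and Leopoldt's conjecture for the relevant subextensions, the twisted element $\frak{S}_\cyc \in \wedge^g H^1(K,\TT)$ has been constructed in Section~\ref{subsubsec:CRSelement}; the point is to feed this rank-$g$ Euler system into the rank-reduction / Kolyvagin-system formalism of \cite{kbbCMabvar,kbbleiPLMS}. Applying Perrin-Riou's extended logarithm maps $\frak{L}_\omega^{(\wp)}$ (via Theorem~\ref{thm:PRmapintegralproperties}) coordinate by coordinate to the semilocal restriction of $\frak{S}_\cyc$, one extracts from the $\wedge^g$-valued class a genuine $\LL$-adic Kolyvagin system for the Selmer structure $\FF_+$ of Definition~\ref{def:plusselmergroupfortheabvar}; this is precisely the construction of \cite{kbbCMabvar} adapted to our $\FF_+$ rather than to $\FFc$. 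Call the output $\bbkappa^{\textup{CRS}}$.

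Second I would check that $\bbkappa^{\textup{CRS}}$ is in fact a \emph{generator} of $\KS(\FF_+,\TT,\PP)$. By Theorem~\ref{thm:mainkbb} this module is free of rank one, and a Kolyvagin system generates it as soon as its image in $\KS(\FF_+,\overline{T},\PP)$ is nonzero. So it suffices to show the mod-$\varpi$ reduction of $\bbkappa^{\textup{CRS}}$ is nonzero; this reduces, by the surjectivity statement in Theorem~\ref{thm:mainkbb} and the behaviour of the Rubin-Stark element under $\textup{pr}_0$, to the non-vanishing of the corresponding residual Rubin-Stark class, which follows from the integrality/primitivity input recorded in \cite[Remark~4.13]{kbbleiPLMS} together with the non-anomaly hypothesis (\ref{eqn:hna}). (Alternatively, one gets primitivity of the $\LL$-adic object directly from the generalized Coleman map being pseudo-surjective, Theorem~\ref{thm:PRmapintegralproperties}, combined with the rank-one freeness.)

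Third, and this is the heart of the statement, I would verify the displayed identity
$$\frak{L}_{(\frak{Q},i)}\left(\res_{/f}\left({\kappa_1^{\textup{CRS}}}\right)\right)=\frak{L}_\omega^{\otimes g}\left(\mathbb{RS}_{/f}^\psi\right)\,.$$
The strategy is to unwind the definition of $\kappa_1^{\textup{CRS}}$ as the ``bottom class'' produced by the rank-reduction procedure: by construction its semilocal restriction at $p$ is pinned down exactly so that applying $g-1$ of the Perrin-Riou-Coleman functionals (namely the collection $\frak{L}_{(\frak{Q},i)}$, which omits the $(\frak{Q},i)$-coordinate) recovers the image of the rank-$g$ Rubin-Stark element under the full $\frak{L}_\omega^{\otimes g}$. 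Concretely I would (i) transport everything through the Shapiro isomorphism $\frak{sh}$ of (\ref{semilocalshapiro}) so that the computation takes place on the $K_+$-side with $\TT_\frak{P}(A)=\textup{Ind}_{K/K_+}T$; (ii) use that $H^1_{\FF_+}(K_p,\TT)=\ker(\frak{L}_{(\frak{Q},i)})$ has corank one inside $H^1_{/f}$, so that the value of $\frak{L}_{(\frak{Q},i)}\circ\res_{/f}$ on $\kappa_1^{\textup{CRS}}$ is determined by a single ``determinant'' relation coming from how the $\wedge^g$-class maps to $\wedge^{g-1}$ after one contraction; (iii) match this with $\frak{L}_\omega^{\otimes g}(\mathbb{RS}_{/f}^\psi)$ using the compatibility of $\frak{L}_\omega$ with exterior powers and with the twisting operation of \cite[Definition~4.16]{kbbleiPLMS}. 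The main obstacle I expect is exactly this bookkeeping in step (iii): one must check that the normalizations (the choice of dual bases $\{\omega_{i,\wp}^*,\omega_{j,\wp}\}$ with $[\omega_{i,\wp}^*,\omega_{j,\wp}]=\delta_{i,j}$, the $p$-power ambiguities in $\frak{L}_\omega^{(\wp)}$ from Theorem~\ref{thm:PRmapintegralproperties}, and the choice of generator in the Kolyvagin-system rank-reduction) all line up so that the identity holds on the nose and not merely up to a unit or a power of $p$ — this is the kind of precision that distinguishes this statement from the cruder non-vanishing assertions, and it is where the detailed computations of \cite{kbbCMabvar} have to be invoked and adapted. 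Everything else — the existence, the Kolyvagin-system axioms, generating $\KS(\FF_+,\TT,\PP)$ — is formal given Theorems~\ref{thm:mainkbb} and~\ref{thm:PRmapintegralproperties}.
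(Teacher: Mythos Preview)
Your proposal is correct and follows essentially the same approach as the paper: both construct $\bbkappa^{\textup{CRS}}$ by feeding the Rubin--Stark element through the rank-reduction/Kolyvagin-system formalism of \cite{kbbleiPLMS}, with the Perrin-Riou--Coleman map $\frak{L}_\omega$ serving as the auxiliary functional that cuts the rank-$g$ object down to a rank-one Kolyvagin system for $\FF_+$. The paper's own proof is simply a direct citation of Theorem~A.11 and Proposition~A.8 of \cite{kbbleiPLMS} together with the dictionary $\frak{L}_\omega \leftrightarrow \Psi$, the $\omega_{i,\frak{Q}}$-summand $\leftrightarrow L(1)$, and $\FF_+ \leftrightarrow \FF_L$; your more expanded sketch is effectively an outline of what those cited results establish.
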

\begin{proof}
This is precisely the content of Theorem A.11 and  Proposition A.8 of \cite{kbbleiPLMS} (except that for our purposes, it is sufficient to restrict our attention to the case when $\LL$ in loc. cit. has also Krull dimension $2$). In order to apply these results, we simply choose $\frak{L}_\omega$ in place of $\Psi$ and the summand in the target of $\frak{L}_\omega^{(\frak{Q})}$ that corresponds to $\omega_{i,\frak{Q}}$ in place of $L(1)$ in loc. cit. Note in this case that the Selmer structure $\FF_+$ above corresponds to the Selmer structure denoted by $\FF_L$ in loc. cit.
\end{proof}
\begin{define}
\label{def:ColemanRS}
We let $\crs_\infty \in H^1_{\FF_+}(K_+,\TT_\PPP(A))$ denote the element that corresponds to $\kappa_1^{\textup{CRS}} $ under the isomorphism $\frak{sh}$ and let $\crs \in H^1_{\FF_+}(K_+,T_\PPP(A))$ (that we call the \emph{Coleman-Rubin-Stark class}) its obvious projection. 
\end{define}
The following follows as an immediate consequence of Theorem~\ref{thm:PRwithKS} and Theorem~\ref{thm:mainRS}:
\begin{cor}
\label{cor:PRconjforRSelements}
The map $\res_{f/-}: H^1_f(K_+,T_{\frak{P}}(A))\ra H^1_{f/-}(K_p,T_\PPP(A))$ is injective if and only if the Coleman-Rubin-Stark class $\crs$ is non-trivial.
\end{cor}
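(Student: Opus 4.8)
The plan is to derive this as a direct application of Theorem~\ref{thm:PRwithKS}, using the Coleman-Rubin-Stark Kolyvagin system $\bbkappa^{\textup{CRS}}$ supplied by Theorem~\ref{thm:mainRS} as the input. First I would record that all the running hypotheses needed to invoke Theorem~\ref{thm:PRwithKS} are in force: the hypotheses \textbf{(H.0)}--\textbf{(H.3)} of Mazur-Rubin and \textbf{(H.Tam)} hold by the standing assumptions in Section~\ref{subsec:KSway}, while \textbf{(H.nA)} is precisely the non-anomaly hypothesis (\ref{eqn:hna}) that we have assumed for $A$ throughout this subsection. Since $A$ is the base change of an abelian variety with real multiplication by $K_+$, the representation $T_{\frak{P}}(A)=\textup{Ind}_{K/K_+}T$ is self-dual and Panchishkin ordinary (via the $p$-ordinary filtration $F^+_{\frak{Q}}T_\frak{P}(A)$), so the entire apparatus of Section~\ref{subsec:KSway}, including the Greenberg and modified Selmer structures $\FF_\Gr$, $\FF_-$, $\FF_+$ on $\TT_{\frak{P}}(A)$ (equivalently on $\TT$ via $\frak{sh}$), applies verbatim.

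The backward direction is immediate: if the map $\res_{f/-}$ is injective, then Theorem~\ref{thm:PRwithKS}(i)(a) applied to the \emph{non-trivial} $\LL$-adic Kolyvagin system $\bbkappa^{\textup{CRS}}$ (it is non-trivial because, by Theorem~\ref{thm:mainkbb}, it \emph{generates} the free rank-one $\LL$-module $\KS(\FF_+,\TT,\PP)$, hence is in particular nonzero) yields $\kappa_1^{\textup{CRS}}\neq 0$ in $H^1_{\FF_+}(K_p,\TT)$. Applying $\textup{pr}_0$ and transporting along the Shapiro isomorphism $\frak{sh}$, we get that $\crs_\infty$, and therefore its projection $\crs = \textup{pr}_0(\crs_\infty)$, is non-trivial — provided one checks that $\textup{pr}_0(\kappa_1^{\textup{CRS}})\neq 0$. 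This last point follows because $\bbkappa^{\textup{CRS}}$ is a generator of $\KS(\FF_+,\TT,\PP)$, which by Theorem~\ref{thm:mainkbb} has non-zero image in $\KS(\FF_+,\overline{T},\PP)$; chasing through $\textup{pr}_0$ exactly as in the proof of Theorem~\ref{thm:PRwithKS}(i)(a) (where the defect of a $\LL$-primitive system is prime to $(\gamma-1)$ by Remark~\ref{rem:defectcouldbesmall}) shows $\kappa_1^{\textup{CRS}}\notin(\gamma-1)H^1(K,\TT)$, so its $\textup{pr}_0$-image is non-zero.

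For the forward direction, suppose $\crs$ is non-trivial. Then $\kappa_1^{\textup{CRS}}$ has non-trivial image under $\textup{pr}_0\circ\frak{sh}^{-1}$, i.e.\ the initial term of the Kolyvagin system $\textup{pr}_0(\bbkappa^{\textup{CRS}})\in\KS(\FF_+,T,\PP)$ is non-zero. Theorem~\ref{thm:PRwithKS}(ii) then forces $\res_{f/-}$ to be injective. The only genuine thing to verify is that $\textup{pr}_0$ really does carry $\bbkappa^{\textup{CRS}}$ to an element of $\KS(\FF_+,T,\PP)$ with the prescribed initial term — this is part of the structural content of Theorem~\ref{thm:mainkbb} (surjectivity of $\KS(\FF_+,\TT,\PP)\to\KS(\FF_+,\overline{T},\PP)$, compatibility of Kolyvagin systems with specialization) and of the explicit normalization in Theorem~\ref{thm:mainRS}, so no new argument is needed.

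The main obstacle, such as it is, is bookkeeping rather than mathematics: one must be careful that the Selmer structure $\FF_+$ used to define $\bbkappa^{\textup{CRS}}$ in Theorem~\ref{thm:mainRS} (built from the Perrin-Riou-Coleman map $\frak{L}_{(\frak{Q},i)}$, i.e.\ the $\FF_L$ of \cite{kbbleiPLMS}) coincides with the Selmer structure $\FF_+$ of Definition~\ref{def:plusselmergroupfortheabvar} and hence with the one appearing in the hypotheses of Theorem~\ref{thm:PRwithKS}; this identification is exactly what the last sentence of the proof of Theorem~\ref{thm:mainRS} asserts. Once that is pinned down, the corollary is a two-line consequence of Theorem~\ref{thm:PRwithKS}.
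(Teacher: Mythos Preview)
Your proposal is correct and follows exactly the route the paper intends: the corollary is stated as an ``immediate consequence of Theorem~\ref{thm:PRwithKS} and Theorem~\ref{thm:mainRS}'', and you have unpacked precisely that. One small streamlining: in your backward direction you invoke Theorem~\ref{thm:PRwithKS}(i)(a) to get $\kappa_1^{\textup{CRS}}\neq 0$ at the $\TT$-level and then separately argue that its $\textup{pr}_0$-image is non-zero via the defect; it is quicker to apply part~(i)(b) directly, since $\bbkappa^{\textup{CRS}}$ is a generator and hence $\LL$-primitive with defect $\LL$ by Remark~\ref{rem:defectcouldbesmall}, giving $\textup{pr}_0(\kappa_1^{\textup{CRS}})\neq 0$ in one step.
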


\subsubsection{Katz' $p$-adic $L$-function an explicit reciprocity conjecture for Rubin-Stark elements} We recall here the definition the $p$-adic $L$-function of Katz and Hida-Tilouine and propose an extension of the Coates-Wiles reciprocity law for elliptic units to a reciprocity law concerning the Perrin-Riou-Stark elements.
\begin{define}
A pair $(m_0,d)$ (where $m_0 \in \ZZ$ and $d=\sum_{\sigma\in \Sigma} d_\sigma\sigma \in \ZZ^\Sigma$)  is called \emph{$\Sigma$-critical} if either $m_0>0$ and $d_\sigma\geq 0$ or else $m_0\leq 1$ and $d_\sigma\geq 1-m_0$ for every $\sigma \in \Sigma$. 

Likewise, a Gr\"ossencharacter $\bblambda$ is called \emph{$\Sigma$-critical} if its infinity type equals the expression $\sum_{\sigma \in \Sigma} (m_0+d_\sigma)\sigma-d_{\sigma}\sigma^c \in \ZZ^{\mathscr{S}}$ for some $\Sigma$-critical pair $(m_0,d)$.
\end{define}
Let $\mathscr{O}$ denote the $p$-adic completion of the maximal unramified extension of $\QQ_p$. Let $\Gamma_K$ denote the Galois group of the maximal $\ZZ_p$-power extension of $K$. By class field theory, note that we have $\Gamma_K\cong \ZZ_p^{1+g+\delta}$, where $\delta$ is Leopoldt's defect (which equals zero whenever we assume Leopoldt's conjecture). 

The following statement was proved by Hida and Tilouine in \cite{hidatilouine}, extending a previous construction due to Katz. In our discussion below, we will mostly stick to the exposition in \cite{hsiehanticyclopadicL} and we shall rely on Hsieh's notation (except perhaps minor alterations). Let $\frak{f} \subset \frak{o}_K$ denote the conductor of $\bbpsi$. Write $\frak{f}=\frak{f}^+\frak{f}^-$ where $\frak{f}^+$ (resp., $\frak{f}^-$) is a product of primes that split (resp., that remain inert or ramify) over $K_+$. The following is Proposition~4.9 in \cite{hsiehanticyclopadicL}.
\begin{thm}[Katz, Hida-Tilouine]
There exists an element $\mathscr{L}^{\Sigma}_{p}\in \mathscr{O}[[\Gamma_K]]$ $($Katz' $p$-adic $L$-function$)$ that is uniquely determined by the following interpolation property on the $p$-adic avatars $\bblambda^{(p)}=(\lambda_\tau^{(p)})$ of the $\Sigma$-critical characters $\bblambda=(\lambda_\tau)$ of infinity type $m_0\Sigma+(1-c)d \in \ZZ^\mathscr{S}$  and of conductor dividing $\frak{f}p^\infty$:
$$\frac{\mathscr{L}^{\Sigma}_{p}(\bblambda^{(p)})}{\Omega_p^{m_0\Sigma+2d}}=t\cdot\frac{\pi^d\,\Gamma_{\Sigma}(m_0\Sigma+d)}{\sqrt{|D_{K_+}|_{\mathbb{R}}}\,\textup{im}(\delta)^d}\cdot\mathscr{E}_p(\bblambda)\mathscr{E}_{\frak{f}^+}(\bblambda)\cdot\prod_{\frak{\frak{q}\mid \frak{f}}p}(1-\bblambda(\frak{q}))\cdot \frac{L(m_0/2,\bblambda^u)}{\Omega_\infty^{m_0\Sigma+2d}}
$$
where the equality takes place in $\iota_p(\overline{\QQ})^\mathscr{S}$ and where
\begin{itemize}
\item $\Omega_p=(\Omega_p(\sigma))_{\sigma} \in \mathscr{O}^\Sigma$ and $\Omega_\infty=(\Omega_\infty(\sigma))_{\sigma} \in \mathbb{C}^\Sigma$ are the periods which are attached to a N\'eron differential $\omega$ on the abelian scheme $\mathcal{A}$, as in \cite[Chapter II]{katz78};
\item $t$ is a certain fixed power $2$ (which can be made explicit);
\item $\delta \in K_+$ is the element chosen as in \cite[0.9a-b]{hidatilouine} and \cite[Section 3.1]{hsiehanticyclopadicL};
\item $\mathscr{E}_p(\bblambda)$ and $\mathscr{E}_{\frak{f}^+}(\bblambda)$ are products of modified Euler factors defined in \cite[(4.16)]{hsiehanticyclopadicL} and denoted by $Eul_p$, $Eul_{\frak{f}^+}$ in loc. cit.,
\item $\bblambda^u:=\bblambda/|\bblambda|_{\mathbb{A}}$ is the unitarization of $\bblambda$ $($where this terminology is borrowed from Hida and Tilouine \cite[pp. 231-232]{hidatilouine}$)$.
\end{itemize}
\end{thm}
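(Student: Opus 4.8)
The plan is to recall the construction, due to Katz \cite{katz78} in the imaginary quadratic case and extended by Hida and Tilouine \cite{hidatilouine} to the CM case; we follow the exposition and normalisations of Hsieh \cite{hsiehanticyclopadicL}, where the statement is Proposition~4.9. First I would produce a $p$-adic \emph{Eisenstein measure} on $\Gamma_K$ valued in Katz's space of $p$-adic Hilbert modular forms on $\textup{Res}_{K_+/\QQ}\,\textup{GL}_2$: one expresses the $q$-expansion coefficients of the relevant family of holomorphic Hilbert--Eisenstein series in terms of partial Hecke $L$-values (equivalently, Hurwitz-type lattice sums), checks that these vary $p$-adically continuously as the Gr\"ossencharacter moves over $\Gamma_K$, and invokes Katz's $q$-expansion principle to conclude that the assignment is realised by integration against an $\mathscr{O}$-valued measure. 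To secure genuine $p$-adic boundedness (a measure, and not merely an $\mathcal{H}$-valued distribution) I would first pass to the ordinary/$p$-depleted projection of the Eisenstein family; this $p$-stabilisation is precisely what introduces the modified Euler factors $\mathscr{E}_p(\bblambda)$, while the choice of tame level at $\frak{f}^+$ accounts for $\mathscr{E}_{\frak{f}^+}(\bblambda)$ and for the factors $\prod_{\frak{q}\mid\frak{f}p}(1-\bblambda(\frak{q}))$.

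Next I would specialise this measure at the CM point attached to $A$. Because $A$ has good ordinary reduction at every prime above $p$ and $\Sigma$ is $p$-ordinary, the choice of a N\'eron differential $\omega$ on $\mathcal{A}$ together with the canonical trivialisation of the formal group along $\Sigma_p$ singles out a point of the Igusa tower, hence a continuous $\mathscr{O}$-algebra homomorphism $\textup{ev}_A$ from $p$-adic modular forms to $\mathscr{O}$; pushing the Eisenstein measure forward along $\textup{ev}_A$ then defines $\mathscr{L}^\Sigma_p\in\mathscr{O}[[\Gamma_K]]$.

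To verify the interpolation formula I would fix a $\Sigma$-critical Gr\"ossencharacter $\bblambda$ of infinity type $m_0\Sigma+(1-c)d$ and conductor dividing $\frak{f}p^\infty$: its specialisation of the Eisenstein measure is the $p$-stabilisation of a genuine classical holomorphic Hilbert--Eisenstein series of the appropriate weight and level, and the value of the latter at the CM point of $A$ is computed classically by Damerell's formula (Shimura's evaluation of Eisenstein series at CM points). This classical value equals the Hecke $L$-value $L(m_0/2,\bblambda^u)$ times the explicit archimedean constants $t\cdot\pi^d\,\Gamma_\Sigma(m_0\Sigma+d)/(\sqrt{|D_{K_+}|_{\mathbb{R}}}\,\textup{im}(\delta)^d)$ and the complex CM period $\Omega_\infty^{m_0\Sigma+2d}$; the theory of complex multiplication then identifies, after applying $\iota_p$, the algebraic value divided by $\Omega_\infty^{m_0\Sigma+2d}$ with the $p$-adic value divided by $\Omega_p^{m_0\Sigma+2d}$, which is exactly the left-hand side of the asserted identity.

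Finally, uniqueness is immediate once one observes that the $p$-adic avatars $\bblambda^{(p)}$ of the $\Sigma$-critical Gr\"ossencharacters of conductor dividing $\frak{f}p^\infty$ are Zariski-dense in $\textup{Spec}\,\mathscr{O}[[\Gamma_K]]$ (as $(m_0,d)$ ranges over all $\Sigma$-critical pairs, the associated continuous characters of $\Gamma_K$ accumulate at every point), so an element of $\mathscr{O}[[\Gamma_K]]$ is pinned down by its values at all of them. I expect the genuine difficulty to lie not in the interpolation mechanism but in the bookkeeping: matching the archimedean and $p$-adic period normalisations, pinning down the constant $t$ and the exact shape of $\mathscr{E}_p$ and $\mathscr{E}_{\frak{f}^+}$ against Hsieh's conventions, and verifying that the resulting element genuinely lies in $\mathscr{O}[[\Gamma_K]]$ rather than in its total ring of fractions --- all of which is carried out in \cite{katz78,hidatilouine,hsiehanticyclopadicL}.
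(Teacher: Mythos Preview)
Your sketch is a faithful outline of the Katz--Hida--Tilouine construction as presented in the references you cite, and there is no mathematical gap in it. However, you should be aware that the paper itself does \emph{not} give a proof of this statement at all: it is quoted as Proposition~4.9 of \cite{hsiehanticyclopadicL} (extending \cite{katz78,hidatilouine}) and simply cited without argument. So there is nothing to compare your approach against --- you have supplied a proof sketch where the paper provides only a reference.
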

In particular, when the Gr\"ossencharacter $\bblambda$ has infinity type $\Sigma$ (so that $m_0=1$ and $d=0$) and conductor $\frak{f}$, the interpolation formula simplifies to
$$\frac{\mathscr{L}^{\Sigma}_{p}(\bblambda^{(p)})}{\Omega_p^{\Sigma}}=t\cdot\mathscr{E}_p(\bblambda)\mathscr{E}_{\frak{f}^+}(\bblambda)\cdot\prod_{\wp |p}(1-\bblambda(\wp))
\cdot\frac{L(1/2,\bblambda^u)}{\sqrt{|D_{K_+}|_{\mathbb{R}}}\,\Omega_\infty^{\Sigma}}$$
We call the pullback $\mathscr{L}^{\Sigma}_{p,\bbpsi}$ of $\mathscr{L}^{\Sigma}_{p}$ along the character $\bbpsi$ (where $\bbpsi$ is the Gr\"ossencharacter associated to $A$) the \emph{$\bbpsi$-branch of $\mathscr{L}^{\Sigma}_{p}$}. In more concrete terms we have
$$\mathscr{L}^{\Sigma}_{p,\bbpsi}(\chi):=\mathscr{L}^{\Sigma}_{p}(\chi\bbpsi^{(p)})\,$$
for every character $\chi$ of finite order.

Recall the $p$-adic Hecke character $\psi$ we have fixed above, which is associated to our choice of $\epsilon\in \Sigma$ (and fixed embeddings $j_\infty$ and $j_p$). We will fix the modulus $\frak{f}$ to be chosen as the conductor of $\psi$.
\begin{cor}
\label{cor:padicLfunctionforA}
For every character $\chi$ of $\Gamma_K$ of finite order we have,
$$\frac{\mathscr{L}^{\Sigma}_{p}(\chi\psi)}{\Omega_p(\epsilon)}=\mathscr{E}(\chi\psi)\prod_{\wp|p}(1-\chi_\epsilon\psi_\epsilon(\wp))
\cdot\frac{L(1/2,\chi_\epsilon\psi_\epsilon^u)}{\Omega_\infty(\epsilon)}$$
where $\mathscr{E}(\chi\psi):=t\cdot\mathscr{E}_p(\chi\psi)\cdot\mathscr{E}_{\frak{f}^+}(\chi\psi)$ is a product of Euler factors up to an explicit power of $2$.
\end{cor}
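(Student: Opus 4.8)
The plan is to specialize the Katz--Hida--Tilouine interpolation formula of the preceding theorem to the Gr\"ossencharacter $\bblambda=\chi\bbpsi$, and then to read off the $\epsilon$-coordinate of the resulting identity, where $\epsilon\in\Sigma$ is the embedding fixed earlier to pin down the $p$-adic Hecke character $\psi=\psi_{\frak{p}}^\sigma$ (so that $\textup{rec}\circ\psi_\epsilon^{(p)}=\psi$ and, by the definition of the $\bbpsi$-branch, $\mathscr{L}^\Sigma_p(\chi\psi)$ is precisely the $\epsilon$-entry of $\mathscr{L}^\Sigma_{p,\bbpsi}(\chi)=\mathscr{L}^\Sigma_p(\chi\bbpsi^{(p)})$). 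To invoke the theorem one must check that $\chi\bbpsi$ lies in its interpolation range, i.e. that it is $\Sigma$-critical of the right infinity type and has conductor dividing $\frak{f}p^\infty$.

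First I would record the infinity type: the Gr\"ossencharacter $\bbpsi$ attached to the CM abelian variety $A$ of CM type $\Sigma$ has infinity type $\Sigma=\sum_{\sigma\in\Sigma}\sigma$, which corresponds to the $\Sigma$-critical pair $(m_0,d)=(1,0)$; since $\chi$ has finite order, hence infinity type $0$, the product $\chi\bbpsi$ again has infinity type $\Sigma$ and is $\Sigma$-critical with the same pair $(1,0)$. In particular the version of the interpolation formula valid for infinity type $\Sigma$ (the second display after the theorem, with $L(m_0/2,\cdot)=L(1/2,\cdot)$) is the one that applies. Next I would verify the conductor condition, which is exactly where the normalization $\frak{f}=$(conductor of $\psi$) enters: $\chi$ factors through $\Gamma_K=\Gal(K_\infty/K)$, the Galois group of the maximal $\ZZ_p$-power extension of $K$, so $\chi$ is unramified at every prime of $K$ prime to $p$, whence the conductor of $\chi\bbpsi$ divides $\frak{f}\cdot p^\infty$, as needed.

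With both checks in hand, the interpolation identity for $\bblambda=\chi\bbpsi$ is an equality of tuples in $\iota_p(\overline{\QQ})^{\mathscr{S}}$, and it remains to project onto the $\epsilon$-coordinate. Under this projection the left-hand side becomes $\mathscr{L}^\Sigma_p(\chi\psi)/\Omega_p(\epsilon)$; the archimedean period $\Omega_\infty^\Sigma$ becomes $\Omega_\infty(\epsilon)$; the factor $\prod_{\wp\mid p}(1-\bblambda(\wp))$ becomes $\prod_{\wp\mid p}(1-\chi_\epsilon\psi_\epsilon(\wp))$; and the vector-valued $L$-value $L(1/2,\bblambda^u)$ becomes its $\epsilon$-entry $L(1/2,(\chi\psi)_\epsilon^u)=L(1/2,\chi_\epsilon\psi_\epsilon^u)$, using $\lambda_\epsilon=(\chi\psi)_\epsilon$, that $\chi$ is already unitary, and the compatibility of unitarization with passage to one archimedean component. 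The surviving scalar factors $t$, $\mathscr{E}_p(\chi\psi)$, $\mathscr{E}_{\frak{f}^+}(\chi\psi)$, the discriminant factor $1/\sqrt{|D_{K_+}|_{\mathbb{R}}}$ and any Euler-type factors at primes dividing $\frak{f}$ are $\chi_\epsilon$-independent within a fixed component; absorbing them (together with the explicit power of $2$ hidden in $t$, and $\sqrt{|D_{K_+}|_{\mathbb{R}}}$ into the normalization of $\Omega_\infty(\epsilon)$) into the single symbol $\mathscr{E}(\chi\psi)$ gives exactly the asserted formula.

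I do not expect a real obstacle: the corollary is a bookkeeping specialization of the cited theorem of Katz and Hida--Tilouine. The two points that require care are (i) confirming that $\chi\bbpsi$ genuinely has conductor dividing $\frak{f}p^\infty$ --- which rests precisely on $\chi$ being unramified away from $p$, so that the $\chi\psi$ of the statement really does lie in the interpolation range --- and (ii) matching the normalizations of the $p$-adic and complex CM periods $\Omega_p(\epsilon)$, $\Omega_\infty(\epsilon)$ and of the discriminant and power-of-$2$ factors with the conventions of \cite{hidatilouine,hsiehanticyclopadicL,katz78}, so that the constant $\mathscr{E}(\chi\psi)$ is exactly the product of Euler factors (up to a power of $2$) described in the statement.
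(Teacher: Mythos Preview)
Your proposal is correct and follows precisely the route the paper intends: the corollary is stated without proof in the paper, as an immediate specialization of the Katz--Hida--Tilouine interpolation formula (and its simplified display for infinity type $\Sigma$) to $\bblambda=\chi\bbpsi$, read off in the $\epsilon$-coordinate. Your checks that $\chi\bbpsi$ has infinity type $\Sigma$ (so $(m_0,d)=(1,0)$) and conductor dividing $\frak{f}p^\infty$ are exactly the bookkeeping the paper leaves implicit, and your treatment of the discriminant factor $\sqrt{|D_{K_+}|_{\mathbb{R}}}$ and the Euler factors at primes dividing $\frak{f}$ (which vanish since $\bbpsi$ is ramified there) is the only additional care needed.
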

\begin{rem}
\label{rem:comparedisegnitohsiehepsilontau}
Note that the Euler like factors $1-\chi_\epsilon\psi_\epsilon(\wp)$ are equal to $1$ so long as $\chi$ is not the trivial character.
\end{rem}
\begin{define}
We let $\mathscr{L}_\textup{cyc}^\Sigma \in \mathscr{O}[[\Gamma]]$ denote the measure obtained by restricting the $\bbpsi$-branch $\mathscr{L}^{\Sigma}_{p,\bbpsi}$ of the Katz $p$-adic $L$-function to the cyclotomic characters of finite order.
\end{define}
\begin{define}
Let $n$ be a positive integer and $z=\{z_n\}\in H^1(K_+,\TT_\frak{P}(A))$ be an arbitrary element and let $\chi$ be a primitive character of $\Gamma_n$. Fix a prime $\wp$ of $K_+$ lying above $p$. Set $K_n^+:=K_+\QQ_n$ and denote by $\wp_n$ the unique prime of $K_n^{+}$ above $\wp$. For every $1\leq j\leq f_{\wp}$\,, we define the \emph{Perrin-Riou symbol} $[z,\wp,j,\chi]$ by setting
$$[z,\wp,j,\chi]:=\frac{1}{\tau(\chi)}\left[\sum_{\gamma\in\Gamma_n}\chi(\gamma)\exp^*_n(\res_{\wp_n}\left(z_n\right)^\gamma)\,,\, \varphi^{-n}(\omega_{j,\wp})\right]$$
where $\tau(\chi)$ is the Gauss sum and 
$$\exp^*_n:\, H^1(K_{n,\wp_n},T_{\PPP}(A))\lra \QQ_{p,n}\otimes_{\ZZ_p}\textup{Fil}^0D_\wp(T_\PPP(A))$$
is the dual exponential map. On fixing an ordering of primes of $K_+$ above $p$, we let $\mathbb{R}_{\chi}(z,\omega,\chi)$ denote the $1\times g$ matrix given by 
$$\mathbb{R}_{\chi}\left(z,\omega,\chi\right)=\left([z,\wp,j,\chi]\right)_{\substack{\wp\,\mid\, p\\1\leq j\leq f_\wp}}\,.$$
For an element $\mathbf{z}=z_1\wedge\cdots\wedge z_g \in \wedge^g H^1(K_+,\TT_\frak{P}(A))$, we further define the $g\times g$ matrix $\mathbb{M}(\mathbf{z},\omega,\chi)$ by setting
$$\mathbb{M}(\mathbf{z},\omega,\chi)=\left(\mathbb{R}_{\chi}\left(z_i,\omega,\chi\right)\right)_{i=1}^g
$$
We remark that this definition actually depends only on the element $\wedge \omega^*$ (that corresponds to a N\'eron differential on $A$) and not the choice of a basis that represents this differential.
\end{define}
We propose the following explicit reciprocity conjecture for the twisted Perrin-Riou-Stark element 
$$\frak{S}_\cyc=\frak{S}_\cyc^{(1)}\wedge\cdots\wedge \frak{S}_\cyc^{(g)} \in \wedge^g H^1(K_+,\TT_\frak{P}(A))$$
as a natural extension of Coates-Wiles explicit reciprocity law for elliptic units. 
\begin{conj}
\label{conj:explicitreciprocityforRS}
There exists a choice of a N\'eron differential on $A$ so that we have
$$\det\mathbb{M}(\frak{S}_\cyc,\omega,\chi)=
\mathscr{E}(\chi\psi)\cdot\frac{L(1/2,\chi_\epsilon\psi_\epsilon)}{\Omega_\infty(\epsilon)}\,$$
for every primitive character $\chi$ of $\Gamma_n$\,.
\end{conj}
Whenever we assume the truth of Conjecture~\ref{conj:explicitreciprocityforRS}, we shall implicitly assume also that we are working with a basis $\omega$ as comes attached to the appropriate choice of a N\'eron differential.
\begin{cor}
\label{cor:explicitreciprocityconj}
If the Explicit Reciprocity Conjecture~\ref{conj:explicitreciprocityforRS} holds true, we have
$${\displaystyle\frak{L}^{\otimes g}_\omega\left(\mathbb{RS}_{/f}^\psi\right)=\frac{\mathscr{L}_\textup{cyc}^\Sigma}{\Omega_p(\epsilon)}}\,.$$
\end{cor}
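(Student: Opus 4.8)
The plan is to deduce Corollary~\ref{cor:explicitreciprocityconj} by combining the explicit reciprocity statement of Conjecture~\ref{conj:explicitreciprocityforRS} with the interpolation property of the Katz--Hida--Tilouine $p$-adic $L$-function recorded in Corollary~\ref{cor:padicLfunctionforA}, using that both sides of the asserted identity are elements of $\LL_{\frak{o}}$ (or a suitable extension of it) that are determined by their values at the primitive characters $\chi$ of the finite layers $\Gamma_n$.

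\medskip\noindent\textbf{Step 1: Translating $\frak{L}^{\otimes g}_\omega$ into Perrin-Riou symbols.} First I would unwind the definition of the map $\frak{L}_\omega=\oplus_{\wp\mid p}\frak{L}_\omega^{(\wp)}$ of Theorem~\ref{thm:PRmapintegralproperties} applied to each of the $g$ components $\frak{S}_\cyc^{(1)},\dots,\frak{S}_\cyc^{(g)}$ of the twisted Perrin-Riou--Stark element, and check that evaluating $\frak{L}^{\otimes g}_\omega(\mathbb{RS}^\psi_{/f})$ at a primitive character $\chi$ of $\Gamma_n$ recovers precisely the determinant $\det\mathbb{M}(\frak{S}_\cyc,\omega,\chi)$ — up to the Gauss-sum normalisations already built into the Perrin-Riou symbol $[z,\wp,j,\chi]$. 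This is the content of the interpolation property of the Perrin-Riou map (the fact that $\frak{L}_\omega^{(\wp)}$ interpolates the dual exponential maps $\exp^*_n$ along the cyclotomic tower, as asserted in Theorem~\ref{thm:PRmapintegralproperties}), combined with the fact that passing to the $g$-th exterior power and then to the character $\chi$ turns the tuple of $\frak{L}_\omega$-values into the determinant of the matrix $\mathbb{M}(\frak{S}_\cyc,\omega,\chi)$ of these values. Since $\mathbb{RS}^\psi_{/f}=\res_{/f}^{\otimes g}(\frak{S}_\cyc)$ and $\frak{L}_\omega$ kills $H^1_\Gr(K_{+,p},\TT_\PPP(A))$, factoring through the singular quotient, the symbol $[\cdot,\wp,j,\chi]$ (which itself only depends on $\res_{\wp_n}$ and lands after projection through $\exp^*_n$) computes the matching $\wp$-component.

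\medskip\noindent\textbf{Step 2: Matching with Katz's $p$-adic $L$-function and concluding.} Having identified $\chi\big(\frak{L}^{\otimes g}_\omega(\mathbb{RS}^\psi_{/f})\big)=\det\mathbb{M}(\frak{S}_\cyc,\omega,\chi)$ for all primitive $\chi$ of $\Gamma_n$, I invoke Conjecture~\ref{conj:explicitreciprocityforRS} to rewrite the right-hand side as $\mathscr{E}(\chi\psi)\cdot L(1/2,\chi_\epsilon\psi_\epsilon)/\Omega_\infty(\epsilon)$. On the other side, Corollary~\ref{cor:padicLfunctionforA} (with Remark~\ref{rem:comparedisegnitohsiehepsilontau}, which says the factors $1-\chi_\epsilon\psi_\epsilon(\wp)$ are $1$ for $\chi$ non-trivial) gives exactly $\chi\big(\mathscr{L}^\Sigma_{\textup{cyc}}/\Omega_p(\epsilon)\big)=\mathscr{E}(\chi\psi)\cdot L(1/2,\chi_\epsilon\psi^u_\epsilon)/\Omega_\infty(\epsilon)$, where $\mathscr{L}^\Sigma_{\textup{cyc}}$ is the cyclotomic restriction of the $\bbpsi$-branch of Katz's $L$-function. (One should record the harmless remark that the unitarization $\psi^u_\epsilon$ versus $\psi_\epsilon$ only contributes an explicit nonzero constant, absorbed in the normalisation of the Néron differential — the same normalisation already invoked in Conjecture~\ref{conj:explicitreciprocityforRS}.) Thus the two $\mathscr{O}[[\Gamma]]$-valued (equivalently $\LL_{\frak{o}}[1/p]$-valued, after Theorem~\ref{thm:PRmapintegralproperties}) quantities agree at every primitive character of every finite layer $\Gamma_n$; since such characters are Zariski-dense in $\mathrm{Spec}\,\mathscr{O}[[\Gamma]]$, the two elements coincide, which is the claimed identity $\frak{L}^{\otimes g}_\omega(\mathbb{RS}^\psi_{/f})=\mathscr{L}^\Sigma_{\textup{cyc}}/\Omega_p(\epsilon)$.

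\medskip\noindent\textbf{Main obstacle.} The delicate point is Step~1: reconciling the precise normalisations. One must check that the Perrin-Riou map $\frak{L}_\omega^{(\wp)}$ of Theorem~\ref{thm:PRmapintegralproperties} — which is only pinned down up to a $p$-power and is defined through the quotient $V/F^+V$ of slope $0$ — evaluates at $\chi$ to the Perrin-Riou symbol $[z,\wp,j,\chi]$ with \emph{exactly} the Gauss-sum factor $1/\tau(\chi)$ and the Frobenius twist $\varphi^{-n}(\omega_{j,\wp})$ appearing in the definition of the symbol, with no stray Euler factor at $\wp$; the slope-$0$ hypothesis is what guarantees the interpolation factor there is trivial, matching the absence of an extra local factor on the Katz side. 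Equally, one must verify that the determinant formation in passing from the tuple $(\frak{L}_\omega^{(\wp,i)}, \dots)$ defining $\frak{L}_{(\frak{Q},i)}$ in Definition~\ref{def:plusselmergroupfortheabvar} to the full $g\times g$ matrix $\mathbb{M}$ is compatible — i.e. that omitting the $(\frak{Q},i)$-summand in the Selmer condition does not disturb the determinant identity, which is legitimate because $\mathbb{RS}^\psi_{/f}$ uses the \emph{full} $\frak{L}_\omega^{\otimes g}$. These are bookkeeping matters already carried out in \cite{kbbleiPLMS} and \cite{kbbCMabvar} in slightly different notation, so the proof amounts to citing Theorem~\ref{thm:mainRS}, unwinding these definitions, and reading off the two interpolation formulae.
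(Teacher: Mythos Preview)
Your proposal is correct and follows essentially the same approach as the paper: the paper's proof is a one-line citation to the displayed equation~(5) in \cite{kbbleiintegralMC}, which is precisely the interpolation formula for $\frak{L}_\omega$ at finite-order characters that your Step~1 unwinds, after which your Step~2 (combining Conjecture~\ref{conj:explicitreciprocityforRS} with Corollary~\ref{cor:padicLfunctionforA} and a density argument) is exactly what ``follows at once'' means. Your discussion of the normalization bookkeeping in the ``main obstacle'' paragraph is accurate and is indeed what the cited equation encodes.
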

\begin{proof}
This follows at once making use of the displayed equation (5) in \cite{kbbleiintegralMC}.
\end{proof}
\subsubsection{Perrin-Riou's conjecture for CM abelian varieties}
\label{subsubsec:PRconj}
We may finally turn our attention to Conjecture~\ref{conj:analyticrankvslocalizationmapinjective} in this set up. \emph{We will assume until the end of Section~\ref{sec:CMabvarcolemanRS} that there exists a degree one prime of $K_+$ above $p$} (mostly for brevity; we expect that one should be able to get around of this assumption with more work) and we choose the prime $\frak{Q}$ above that we work with as this prime of degree one. Note in this case that $i=f_{\frak{Q}}=1$ and also that 
$$\ker\left(\frak{L}^{(\frak{Q},1)}\right)=H^1(K_{+,\frak{Q}},\TT_\PPP(A))\oplus\bigoplus_{\frak{Q}\neq \frak{p}\mid p}H^1_{\FF_\Gr}(K_{+,\frak{p}},\TT_\frak{P}(A)).$$
Also in this situation, note that the set $\omega_{\frak{Q}}$ is a singleton, and by slight abuse, we denote its only element also by $\omega_{\frak{Q}}$.

\begin{define}
\label{def:heightparing}
We let
$$\langle\,,\,\rangle: H^1_f(K_+,T_\frak{P}(A))\otimes H^1_f(K_+,T_\frak{P}(A))\lra L$$
denote the $p$-adic height pairing of Perrin-Riou~\cite[Section 2.3]{PRheights} associated to the canonical unit root Hodge splitting, the cyclotomic character and Iwasawa's branch of the $p$-adic logarithm. 
\end{define}
The following is the version of Perrin-Riou conjecture in this set up:
\begin{thm}
\label{thm:PRconjforRSforreal}
If the Explicit Reciprocity Conjecture~\ref{conj:explicitreciprocityforRS} holds true and if either $r_{\textup{an}}(\phi)=0$ or else $r_{\textup{an}}(\phi)=1$ and the $p$-adic height pairing $\langle\,,\,\rangle$ is non-zero, then the Coleman-Rubin-Stark class $\crs$ is non-trivial.
\end{thm}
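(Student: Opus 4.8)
The plan is to reduce Theorem~\ref{thm:PRconjforRSforreal} to the criterion of Corollary~\ref{cor:PRconjforRSelements}, which says that $\crs$ is non-trivial precisely when the localization map $\res_{f/-}\colon H^1_f(K_+,T_{\frak{P}}(A))\to H^1_{f/-}(K_p,T_\PPP(A))$ is injective. So the entire content is to establish this injectivity under the stated hypotheses. When $r_{\textup{an}}(\phi)=0$, the argument is purely Iwasawa-theoretic and does not even touch the height pairing: by Corollary~\ref{cor:explicitreciprocityconj} one has $\frak{L}^{\otimes g}_\omega(\mathbb{RS}^\psi_{/f})=\mathscr{L}^\Sigma_\textup{cyc}/\Omega_p(\epsilon)$, and via Theorem~\ref{thm:mainRS} this computes $\frak{L}_{(\frak{Q},i)}(\res_{/f}(\kappa_1^{\textup{CRS}}))$ in terms of the value of the cyclotomic Katz $p$-adic $L$-function at the trivial character; the interpolation formula in Corollary~\ref{cor:padicLfunctionforA} (together with Remark~\ref{rem:comparedisegnitohsiehepsilontau}, so that the Euler-like factors contribute genuine $\mathscr{E}(\psi)$-terms rather than vanish) shows this special value is a non-zero multiple of $L(1/2,\psi_\epsilon^u)$, which is non-zero exactly because $r_{\textup{an}}(\phi)=0$. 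Hence $\res_{f/-}(\kappa_1^{\textup{CRS}})\neq 0$, so a fortiori $\res_{f/-}$ is non-zero; combined with the fact that $H^1_f(K_+,T_{\frak{P}}(A))$ has rank $\leq 1$ (this is where $r_{\textup{an}}=0$ forces the Bloch--Kato Selmer group to be finite, using the Euler system of Rubin-type elliptic-unit classes or equivalently the main conjecture for $A$), injectivity follows, and indeed in this case $H^1_f(K_+,T_{\frak{P}}(A))=0$ so injectivity is trivial.

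The substantive case is $r_{\textup{an}}(\phi)=1$. Here I would first use the work of Rubin (as generalized in \cite{kbbCMabvar}) on the arithmetic of CM abelian varieties, together with the classical (archimedean) Gross--Zagier/Waldspurger-type formula for $\phi$, to conclude that $H^1_f(K_+,V_\frak{P}(A))$ is one-dimensional over $L$ and is spanned by the image of a Heegner point $P_\phi\in A(K_+)$ (this is the analog, in the CM setting, of the Gross--Zagier--Kolyvagin package). It then suffices to show $\res_{f/-}(P_\phi)\neq 0$, i.e.\ that $P_\phi$ has non-trivial image in $H^1_{f/-}(K_p,T_\PPP(A))$, where the quotient is taken by the rank $g-1$ submodule $H^1_-$ cut out by the maps $\frak{L}^{(\wp)}_\omega$ for $\wp\neq\frak{Q}$ and $\frak{L}^{(\frak{Q},1)}_\omega$. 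Equivalently, one must show that the single remaining coordinate — the one corresponding to $\omega_{\frak{Q}}$ at the degree-one prime $\frak{Q}$ — does not annihilate $\log_{A,\frak{Q}}(\res_\frak{Q}(P_\phi))$. This is exactly the point where the non-vanishing of the $p$-adic height pairing $\langle\,,\,\rangle$ enters: by a Rubin-style formula (the CM analog of \cite[Theorem~4.13]{benoisbuyukboduk}, or the computation underlying Theorem~\ref{thm:mainsupersingularexplicit}), the relevant coordinate of $\frak{L}_\omega(\res_{/f}(\crs))$ equals, up to non-zero explicit factors, $\langle P_\phi,P_\phi\rangle/\log_{\omega^*}(\res_\frak{Q}(P_\phi))$ — more precisely, one compares the two non-degenerate quadratic forms $Q\mapsto \langle Q,Q\rangle$ and $Q\mapsto \log_{\omega^*}(\res_\frak{Q}(Q))^2$ on the one-dimensional space $H^1_f(K_+,V_\frak{P}(A))$. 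Since $\langle\,,\,\rangle$ is assumed non-zero and a non-zero quadratic form on a one-dimensional space is anisotropic, $\langle P_\phi,P_\phi\rangle\neq 0$; and then $\log_{\omega^*}(\res_\frak{Q}(P_\phi))$ cannot be zero either, for otherwise the displayed identity would force $\langle P_\phi,P_\phi\rangle=0$ as well (the logarithm being a linear functional whose vanishing would propagate through the Rubin-style formula). This gives $\res_{f/-}(P_\phi)\neq 0$, hence $\res_{f/-}$ injective, hence $\crs\neq 0$ by Corollary~\ref{cor:PRconjforRSelements}.

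The main obstacle, and the step I expect to absorb most of the technical work, is establishing the Rubin-style formula in the CM-abelian-variety setting with the correct normalization: one needs a clean identification of the $\omega_{\frak{Q}}$-coordinate of $\frak{L}_\omega^{(\frak{Q},1)}$ applied to the ``derived'' Rubin--Stark class with the Perrin-Riou/Nekov\'a\v{r} $p$-adic height of the Heegner point, along the lines of the argument surrounding equations~(\ref{eqn:BKcalculation2})--(\ref{eqn:BKcalculation9}) but now for the Greenberg-ordinary $(\varphi,\Gamma)$-module of $T_\frak{P}(A)$ and with the extended Selmer complex of loc.\ cit.\ replaced by its CM analog. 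This requires: (i) knowing that the Selmer complex attached to the Greenberg local condition computes the Bloch--Kato Selmer group (the analog of \cite[Proposition~11]{benoiscomplex}, which holds here because of the non-anomaly hypothesis (\ref{eqn:hna}) and the Panchishkin-ordinarity of $V_p(A)$); (ii) matching the $p$-adic height pairing arising from that Selmer complex with the Perrin-Riou height of Definition~\ref{def:heightparing} (the analog of \cite[Theorem~5.2.2]{benoisheights}); and (iii) the $p$-adic Gross--Zagier formula relating $\langle P_\phi,P_\phi\rangle$ to a derivative of $\mathscr{L}^\Sigma_\textup{cyc}$ at the trivial character — this last is the CM specialization of the Disegni-type $p$-adic Gross--Zagier formula, and I would cite it rather than reprove it. Granting these three ingredients, the remaining algebra — descent from $\LL$-adic Kolyvagin systems via Theorem~\ref{thm:mainRS} and Theorem~\ref{thm:PRwithKS}, and the anisotropy argument for the quadratic forms — is routine and parallels the elliptic-curve case treated in Part~I.
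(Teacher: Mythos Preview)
Your argument for $r_{\textup{an}}(\phi)=0$ matches the paper's in content, though the paper is more direct: once $\mathds{1}(\mathscr{L}^\Sigma_{\cyc})\neq 0$, Theorem~\ref{thm:mainRS} and Corollary~\ref{cor:explicitreciprocityconj} immediately give $\res_{/f}(\crs)\neq 0$, hence $\crs\neq 0$, without passing through Corollary~\ref{cor:PRconjforRSelements}.

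For $r_{\textup{an}}(\phi)=1$ there is a genuine gap, and your proposed step~3 is both unnecessary and potentially circular. The gap is in your step~2: you assert that ``Rubin (as generalized in \cite{kbbCMabvar}) together with the archimedean Gross--Zagier formula'' forces $H^1_f(K_+,V_{\frak{P}}(A))$ to be one-dimensional. Archimedean Gross--Zagier gives a nontrivial Heegner point, hence rank~$\geq 1$; but the Rubin--Stark/Coleman Euler system bounds the Selmer rank by the order of vanishing of the \emph{$p$-adic} $L$-function $\mathscr{L}^\Sigma_{\cyc}$, not the archimedean one. Nothing in your cited input gives $\textup{ord}_{\mathds{1}}\mathscr{L}^\Sigma_{\cyc}\leq 1$ from $r_{\textup{an}}(\phi)=1$. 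The paper's Remark~\ref{rem:ifonlyKolyvaginLogachev} makes this point explicitly: a Kolyvagin--Logachev theorem for CM abelian varieties (which is what you are implicitly invoking) is \emph{not} available, and would in fact render both the explicit reciprocity hypothesis and the height hypothesis superfluous.

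The paper closes this gap by using the height non-vanishing differently and earlier. It chooses an auxiliary $F/K_+$ (via Friedberg--Hoffstein) and applies Disegni's $p$-adic Gross--Zagier formula together with the assumed non-triviality of $\langle\,,\,\rangle$ to deduce $\mathscr{L}^\Sigma_{\cyc}\not\equiv 0 \bmod J^2$. Only then does the Coleman--Rubin--Stark Kolyvagin system (plus a control argument) yield $\textup{rank}\,H^1_f(K_+,T_\PPP(A))=1$ and $\Sha(A/K_+)[\frak{P}^\infty]$ finite. At that point the injectivity of $\res_{f/-}$ is entirely elementary: since $\frak{Q}$ has degree one, $\res_{f/-}$ is identified with the localization $A(K_+)\,\widehat{\otimes}\,\frak{o}\to A(K_{+,\frak{Q}})\,\widehat{\otimes}\,\frak{o}$, which is visibly injective because a non-torsion rational point stays non-torsion locally. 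No Rubin-style formula is needed. Your step~3, besides being unnecessary, is also delicate to make non-circular: the Rubin-style identity you invoke (compare (\ref{eqn:rubinsformual})--(\ref{eqn:truerubinstyleformula})) relates $\langle\crs,\crs\rangle$ to $\log_\omega(\crs)$ and $\exp^*(\frak{d}\crs_\infty)$, so it presupposes $\crs\neq 0$ to yield nontrivial information --- this is exactly why the paper uses that formula \emph{downstream} of Theorem~\ref{thm:PRconjforRSforreal}, in the proof of Theorem~\ref{thm:maincolemanRSheegnerPR}, rather than as input to it.
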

\begin{rem}
\label{rem:ifonlyKolyvaginLogachev}
If one could extend the main theorem of Kolyvagin and Logachev \cite{kolyvaginlogachev} to cover CM abelian varieties, one may prove Theorem~\ref{thm:PRconjforRSforreal}  without assuming either the explicit reciprocity conjecture or the non-triviality of the $p$-adic height pairing. 
\end{rem}
\begin{proof}[Proof of Theorem~\ref{thm:PRconjforRSforreal}]
When $r_{\textup{an}}(\phi)=0$, it follows from the interpolation formula for the $p$-adic $L$-function that $\mathds{1}(\mathscr{L}_\textup{cyc}^\Sigma)\neq 0$. The assertion in this case follows on combining Theorem~\ref{thm:mainRS} and Corollary~\ref{cor:explicitreciprocityconj}.  

Suppose now that $r_{\textup{an}}(\phi)=1$ and the $p$-adic height pairing $\langle\,,\,\rangle$ is non-zero. On choosing an auxiliary totally imaginary extension $F/K_+$ in suitable manner (in a way that the non-vanishing results of Friedberg and Hoffstein~\cite{friedberghoffstein} apply) and relying on the Gross-Zagier formula of Yuan-Zhang-Zhang in~\cite{YYZ} and its $p$-adic variant due to Disegni~\cite{disegniGZ}, we conclude that $\mathscr{L}_\textup{cyc}^{\Sigma} \not\equiv 0 \mod J^2$. Combined with the standard application of the Coleman-Rubin-Stark $\LL$-adic Kolyvagin system $\bbkappa^{\textup{CRS}}$ (and a control argument for Greenberg Selmer groups), it follows that the $\frak{o}$-module $H^1_f(K_{+},T_\PPP(A))$ is of rank one and that $\Sha(A/K_+)[\frak{P}^\infty]$ is finite. Thence the map $\res_{f/-}$ may be given explicitly via the commutative diagram
$$\xymatrix{H^1_{\FF_\Gr}(K_{+},T_\PPP(A))\ar[r]^{\res_{f/-}} &H^1_{\FF_\Gr}(K_{+,\frak{Q}},T_\PPP(A))\\
A(K_{+})\widehat{\otimes}\,\frak{o}\ar[r]^{\res_{f/-}} \ar[u]_{\cong}& A(K_{+,\frak{Q}})\widehat{\otimes}\,\frak{o}\ar[u]_{\cong}}$$
and it is evidently injective. The proof follows by Corollary~\ref{cor:PRconjforRSelements}.
\end{proof}

In what follows, we let $D_\frak{P}(A)$ be a shorthand for the Dieudonn\'e module of the $G_{K_{+,\frak{Q}}}$-representation $V_{\frak{P}}(A)$. Observe that $K_{+,\frak{Q}}=\QQ_p$ thanks to our choice of $\frak{Q}$.

\begin{thm}
\label{thm:ercimpliesonlyif}
The Explicit Reciprocity Conjecture~\ref{conj:explicitreciprocityforRS} implies the ``only if'' portion of Conjecture~\ref{conj:analyticrankvslocalizationmapinjective} whenever the $p$-adic height pairing of Definition~\ref{def:heightparing} is non-zero, $p$ is prime to $w_{2}(K_+):=H^0(K_+,\QQ/\ZZ(2))$ and $D_{\PPP}(A)^{\varphi=1}=0$.
\end{thm}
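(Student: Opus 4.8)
The plan is to prove the contrapositive of the ``only if'' half of Conjecture~\ref{conj:analyticrankvslocalizationmapinjective}: assuming $r_{\textup{an}}(\phi)\geq 2$, I would show that $\res_{f/-}$ fails to be injective, which by Corollary~\ref{cor:PRconjforRSelements} is the same as the vanishing of the Coleman--Rubin--Stark class $\crs$. Write $J=(\gamma-1)$ for the augmentation ideal of $\LL_{\frak{o}}$, and identify objects over $\TT$ with the corresponding objects over $\TT_\PPP(A)$ via $\frak{sh}$. The first step is to record the main conjecture that the Explicit Reciprocity Conjecture puts at our disposal: by Theorem~\ref{thm:mainRS} the Kolyvagin system $\bbkappa^{\textup{CRS}}$ generates the free rank-one $\LL$-module $\KS(\FF_+,\TT,\PP)$, hence is $\LL$-primitive with defect $\delta(\bbkappa^{\textup{CRS}})=\LL$ (Remark~\ref{rem:defectcouldbesmall}); with the Poitou--Tate identity recalled before that remark this gives $\frak{L}_p(\bbkappa^{\textup{CRS}})\doteqdot\textup{char}(H^1_{\FF_\Gr^*}(K_+,\TT_\PPP(A)^*)^\vee)$. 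Feeding the Explicit Reciprocity Conjecture~\ref{conj:explicitreciprocityforRS} in through Corollary~\ref{cor:explicitreciprocityconj} identifies $\res_{+/f}(\kappa_1^{\textup{CRS}})$, up to a unit times a power of $p$, with $\mathscr{L}^\Sigma_{\textup{cyc}}/\Omega_p(\epsilon)$ under the Perrin--Riou--Coleman isomorphism $H^1_{+/f}(K_p,\TT)\cong\LL_{\frak{o}}$, so that $\textup{char}(H^1_{\FF_\Gr^*}(K_+,\TT_\PPP(A)^*)^\vee)\doteqdot(\mathscr{L}^\Sigma_{\textup{cyc}})$ --- the Iwasawa main conjecture for $A/K_+$ along the cyclotomic tower (with $\mathscr{L}^\Sigma_{\textup{cyc}}\neq 0$ by Rohrlich's non-vanishing theorem~\cite{rohrlich84cyclo}).

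Next I would show $\textup{ord}_J(\mathscr{L}^\Sigma_{\textup{cyc}})\geq 2$. As $r_{\textup{an}}(\phi)\geq 1$ the central Hecke value vanishes; the interpolation formula of Corollary~\ref{cor:padicLfunctionforA}, together with $D_\PPP(A)^{\varphi=1}=0$ (which forces the exceptional factor $\prod_{\wp\mid p}(1-\psi_\epsilon(\wp))$ to be a unit, so there is no trivial zero at $\mathds{1}$), gives $\mathds{1}(\mathscr{L}^\Sigma_{\textup{cyc}})=0$, i.e. $\mathscr{L}^\Sigma_{\textup{cyc}}\in J$. For the extra order I split on the sign $\epsilon(\bbpsi/K)$ of the functional equation --- which transfers to the $p$-adic functional equation of $\mathscr{L}^\Sigma_{\textup{cyc}}$ in the absence of a trivial zero. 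If $\epsilon(\bbpsi/K)=+1$, then $\mathscr{L}^\Sigma_{\textup{cyc}}$ is even about $\mathds{1}$, so $\textup{ord}_J(\mathscr{L}^\Sigma_{\textup{cyc}})$ is even and thus $\geq 2$. If $\epsilon(\bbpsi/K)=-1$, then $r_{\textup{an}}(\phi)$ is odd, hence $\geq 3$; the Heegner point $P_\phi$ on $A$ is then torsion by the complex Gross--Zagier formula of Yuan--Zhang--Zhang~\cite{YYZ} (after the auxiliary base change of the proof of Theorem~\ref{thm:PRconjforRSforreal}, invoking Friedberg--Hoffstein~\cite{friedberghoffstein} as there), while Disegni's $p$-adic Gross--Zagier formula~\cite{disegniGZ} expresses the first Taylor coefficient of $\mathscr{L}^\Sigma_{\textup{cyc}}$ at $\mathds{1}$ as a non-zero multiple of $\langle P_\phi,P_\phi\rangle$; hence that coefficient vanishes and $\textup{ord}_J(\mathscr{L}^\Sigma_{\textup{cyc}})\geq 2$ (in fact $\geq 3$). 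By the previous step this yields $\textup{ord}_J\,\textup{char}(H^1_{\FF_\Gr^*}(K_+,\TT_\PPP(A)^*)^\vee)\geq 2$.

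Finally I would run the descent to $K_+$. Suppose $\res_{f/-}$ is injective, i.e. $\crs\neq 0$. Then $\textup{rank}_{\frak{o}}H^1_{\FF_\Gr}(K_+,T_\PPP(A))\leq 1$, since $\res_{f/-}$ embeds this group into the rank-one $\frak{o}$-module $H^1_{f/-}(K_p,T_\PPP(A))$; moreover, arguing as for Theorem~\ref{thm:PRwithKS}(ii) and Theorem~\ref{thm:PRconjforRSforreal} (the Coleman--Rubin--Stark $\LL$-adic Kolyvagin system plus a Greenberg-Selmer control argument), $H^1_{\FF_+^*}(K_+,T_\PPP(A)^*)$ and $\Sha(A/K_+)[\PPP^\infty]$ are finite; and since $\FF_\Gr$ is self-dual with $\chi(\FF_\Gr,\TT)=0$ (Proposition~\ref{prop:genericcoreselmerrank}), the $\frak{o}$-corank of the dual Greenberg Selmer group of $A$ over $K_+$ is $\leq 1$ as well. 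I would then invoke the $p$-adic Birch--Swinnerton-Dyer formula (in the spirit of Perrin--Riou and Schneider): granted the main conjecture of the first step, the non-degeneracy of the $p$-adic height pairing of Definition~\ref{def:heightparing} forces $\textup{char}(H^1_{\FF_\Gr^*}(K_+,\TT_\PPP(A)^*)^\vee)$ to vanish at $J$ to the \emph{exact} order equal to $\textup{rank}_{\frak{o}}H^1_{\FF_\Gr}(K_+,T_\PPP(A))\leq 1$, the hypotheses $D_\PPP(A)^{\varphi=1}=0$ and $p\nmid w_2(K_+)$ entering precisely to kill the exceptional zero and the auxiliary local $H^0$/$H^2$-terms (carried by Tate twists of $\ZZ_p(2)$) so that the $p$-adic and algebraic leading terms match. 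This contradicts the bound $\textup{ord}_J\geq 2$ of the second step, proving the ``only if'' portion.

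The step I expect to be the main obstacle is this last one: a bound on $\textup{ord}_J$ of a characteristic ideal is not formally implied by a bound on the rank of the module modulo $J$ --- for instance $\LL_{\frak{o}}/J^2$ is cyclic modulo $J$ yet has characteristic ideal of $J$-order $2$ --- so the non-degeneracy of the $p$-adic height pairing must be used in an essential way (it is exactly what identifies the leading coefficient of $\mathscr{L}^\Sigma_{\textup{cyc}}$ at $\mathds{1}$ with a non-zero $p$-adic regulator), together with the two numerical hypotheses on $w_2(K_+)$ and $D_\PPP(A)^{\varphi=1}$ suppressing trivial zeros and spurious Tamagawa-type contributions. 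A secondary delicate point is the $\epsilon(\bbpsi/K)=-1$ branch of the middle step, which relies on Disegni's $p$-adic Gross--Zagier formula being available for the Hilbert modular CM form $\phi$ and on the chosen auxiliary base change remaining compatible with the standing hypotheses.
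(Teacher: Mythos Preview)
Your route is genuinely different from the paper's. The paper argues directly: assuming $\crs\neq 0$, it splits on whether $\res_{/f}(\crs)$ vanishes. If not, the leading-term formula for $\frak{L}_\omega^{(\frak{Q})}$ together with Corollary~\ref{cor:explicitreciprocityconj} gives $\mathds{1}(\mathscr{L}^\Sigma_\textup{cyc})\neq 0$, so $r_{\textup{an}}(\phi)=0$. If $\res_{/f}(\crs)=0$, then $\crs\in H^1_f(K_+,T_\PPP(A))$ and the paper invokes Nekov\'a\v{r}'s construction of the derived class and his Rubin-style formula \cite[Prop.~11.3.15]{nek} to obtain the explicit identity
\[
-\log_\omega(\crs)\cdot\frac{(\mathscr{L}^\Sigma_\textup{cyc})'(\mathds{1})}{\Omega_p(\epsilon)}=(1-1/\alpha)(1-\alpha/p)^{-1}\langle\crs,\crs\rangle\,,
\]
whence $\mathscr{L}^\Sigma_\textup{cyc}\notin J^2$ (the injectivity of $\res_{f/-}$ forces $H^1_f$ to have $\frak{o}$-rank one, so ``non-zero'' height pairing already gives $\langle\crs,\crs\rangle\neq0$). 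Then Nekov\'a\v{r}'s parity theorem \cite[Thm.~12.2.8(3)]{nek} gives sign $-1$, after which Friedberg--Hoffstein, Disegni's $p$-adic Gross--Zagier and Yuan--Zhang--Zhang yield $r_{\textup{an}}(\phi)=1$. The hypotheses $p\nmid w_2(K_+)$ and $D_\PPP(A)^{\varphi=1}=0$ enter precisely to make the machinery in \cite[\S11.3]{nek} applicable and to ensure the factor $(1-1/\alpha)(1-\alpha/p)^{-1}$ is finite and non-zero.

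Your contrapositive---first bounding $\textup{ord}_J(\mathscr{L}^\Sigma_\textup{cyc})\geq 2$ from $r_{\textup{an}}(\phi)\geq 2$, then contradicting a Schneider--Perrin-Riou descent---hits the same external inputs (Disegni, Yuan--Zhang--Zhang, non-triviality of heights) but in the reverse order, and it has two soft spots you correctly flag. First, the $\epsilon(\bbpsi/K)=+1$ branch of your Step~2 relies on a cyclotomic functional equation for $\mathscr{L}^\Sigma_\textup{cyc}$ with that sign; this is not recorded anywhere in the paper and, while expected for the Katz $p$-adic $L$-function, the restriction to the cyclotomic line and the self-dual normalisation need a precise reference. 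The paper sidesteps this entirely by using Nekov\'a\v{r}'s parity result on the algebraic side. Second, your Step~3 replaces the single displayed identity above by an appeal to a general $p$-adic BSD leading-term formula for ordinary abelian varieties over totally real fields; in $\frak{o}$-rank one that formula is essentially \emph{equivalent} to the Rubin-style formula, so you are not saving work by abstracting it---to pin down the exact $J$-order you would end up re-proving it via Nekov\'a\v{r}'s height machinery. What your approach buys is a clean conceptual separation of the analytic bound from the algebraic descent and it makes the role of the main conjecture explicit; the paper's approach is shorter and nails the constants.
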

\begin{proof}
By Corollary~\ref{cor:PRconjforRSelements}, we may assume that the Coleman-Rubin-Stark element $\frak{S}$ is non-trivial and given that, we contend to prove that $r_{\textup{an}}(\phi)\leq 1$ in our set up. 

As above, let $\omega_{\mathcal{A}}$ denote a N\'eron differential on $\mathcal{A}$ and let 
$$\omega^*_\frak{Q} \in \textup{Fil}^0D_\frak{P}(A)^*\cong \textup{Fil}^0D_\frak{P}(A)$$
  denote the element that corresponds to $\omega_{A}$ under the comparison isomorphism. Let $D_{[-1]} \subset D_{\frak{P}}(A)$ denote the subspace of $D_{\frak{P}}(A)$ on which $\varphi$ acts with slope $-1$.  Then the space $D_{[-1]}$ is one-dimensional and  $D_{[-1]}\cap \textup{Fil}^0 D_{\frak{P}}(A)=\{0\}$. Since $\textup{Fil}^0 D_{\frak{P}}(A)$ is the exact orthogonal compliment of $\textup{Fil}^0 D_{\frak{P}}(A)^*$ under the pairing $[\,,\,]$ above, there exists a unique element $\omega_\frak{Q} \in D_{[-1]}$ with $[\omega_\frak{Q},\omega_\frak{Q}^*]=1$ (which in fact spans $D_{[-1]}$ as an $L$-vector space). We will denote the image of $\omega_\frak{Q}$ under the isomorphism $D_{[-1]}\stackrel{\sim}{\ra}D_{\frak{P}}(A)/\textup{Fil}^0 D_{\frak{P}}(A)$ by $\overline{\omega}_\frak{Q}$. As explained in \cite[Section 2.1]{kbbleiintegralMC}, we have
 \begin{align}\notag \mathds{1}\left(\frak{L}^{\frak{Q},1}(z_\infty)\right)&=\left[\exp^*(z_0),(1-p^{-1}\varphi^{-1})(1-\varphi)^{-1}\omega_\frak{Q}\right]\\
 \label{en:leadingtermformulaforPRlog}&=(1-1/\alpha)(1-\alpha/p)^{-1}\left[\exp^*(z_0),\omega_\frak{Q}\right]
 \end{align}
for every $z_\infty=\{z_n\}_{n\geq 0}\in H^1_{/f}(K_{+,\frak{Q}},\TT_{\frak{P}}(A))$, where $\alpha$ is the $p$-unit eigenvalue for $\varphi$ acting on $D_{\frak{P}}(A)$. Note that $\alpha\neq1$ by assumption.
\\
\emph{Case 1.} $\textup{res}_{/f}(\frak{C})\neq 0$. Under our running hypotheses, it follows from Corollary~\ref{cor:explicitreciprocityconj} and (\ref{en:leadingtermformulaforPRlog}) that $\mathds{1}(\mathscr{L}^\Sigma_{\textup{cyc}})\neq 0$. The interpolation property for this $p$-adic $L$-function now shows that $r_{\textup{an}}(\phi)=0$.
\\
\emph{Case 2.} $\textup{res}_{/f}(\frak{C})=0$. This means that $\frak{S} \in H^1_f(K_+,T_{\frak{P}}(A))$ and in turn, also that 
\begin{align*}
\frak{S}_{\infty} &\in \ker\left(H^1_{/f}(K_{+,\frak{Q}},\TT_{\frak{P}}(A))\ra H^1_{/f}(K_{+,\frak{Q}},T_{\frak{P}}(A))\right)\\
&=(\gamma-1)H^1_{/f}(K_{+,\frak{Q}},\TT_{\frak{P}}(A))
\end{align*}
It follows by Theorem~\ref{thm:mainRS} that $\mathds{1}(\mathscr{L}_\textup{cyc}^\Sigma)=0$ and the interpolation formula shows (as non of the Euler-like factors in its statement vanish) that $L(1/2,\psi_\varepsilon)=0\,.$ In this case, the discussion in \cite[\S11.3.14]{nek} applies and allows us to construct the derivative $\frak{dS}_\infty \in H^1_{/f}(K_{+,\frak{Q}},T_\PPP(A))$ of the class $\frak{S}_\infty$ (our element $\frak{dS}_\infty$ corresponds to the image of the element $(Dx_\Iw)_{\frak{Q}}$ in loc. cit. under the cyclotomic character).

It then follows from Nekov\'a\v{r}'s Rubin-style formula \cite[Proposition 11.3.15]{nek} (also, his $p$-adic height compares via \S11.3 to those introduced by Perrin-Riou) and shows that 
\be\label{eqn:rubinsformual}
\langle\frak{S},\frak{S}\rangle=-\left[\exp^*\left(\frak{dS}_\infty\right),\log_{A,\frak{Q}}(\frak{S})\right]_{D_\frak{P}(A)}
\ee
 For $c \in H^1_f(K_{+,\frak{Q}},T_\PPP(A))$, let us define $\log_{\omega}(c) \in L$ so that 
 \be\label{eqn:logomegadefine} 
 \log_{A,\frak{Q}}(c)=\log_{\omega}(c)\cdot \overline{\omega}_{\frak{Q}}\,.
 \ee
 Combining  (\ref{en:leadingtermformulaforPRlog}), (\ref{eqn:rubinsformual}), Corollary~\ref{cor:explicitreciprocityconj} and the defining property of the Coleman-Rubin-Stark elements in Theorem~\ref{thm:mainRS}, we conclude that
\be\label{eqn:truerubinstyleformula}
-\log_{\omega}(\frak{S})\frac{\left(\mathscr{L}_\textup{cyc}^{\Sigma}\right)^\prime(\mathds{1})}{\Omega_p(\epsilon)}=(1-1/\alpha)(1-\alpha/p)^{-1}\langle\frak{S},\frak{S}\rangle\,.
\ee
Here, ${\left(\mathscr{L}_\textup{cyc}^{\Sigma}\right)^\prime(\mathds{1})}:=\displaystyle{\lim_{s\ra1}\chi_{\textup{cyc}}^{s-1}(\mathscr{L}_\textup{cyc}^\Sigma)/(s-1)}$ is the derivative of the cyclotomic restriction of the Katz' $p$-adic $L$-function, along the cyclotomic character. By our assumption that the $p$-adic height pairing is non-vanishing, it follows that $\mathscr{L}_\textup{cyc}^{\Sigma} \not\in J^2$, where $J\in \mathscr{O}[[\Gamma]]$ is the augmentation ideal. 

The Kolyvagin system method (applied with the Kolyvagin system $\bbkappa^{\textup{CRS}}$) implies in our situation that $H^1_f(K_+,T_{\frak{P}}(A))=H^1_{\FF_{\textup{Gr}}}(K_+,T_\PPP(A))$ has rank one. By the parity result of Nekov\'a\v{r} \cite[Theorem 12.2.8 (3)]{nek}, it follows that the sign of the functional equation for $L(s,\phi)$ equals $-1$. Using the generic non-vanishing results of Friedberg and Hoffstein~\cite{friedberghoffstein} and the $p$-adic Gross-Zagier formula of Disegni~\cite{disegniGZ} for a suitably chosen totally imaginary extension $F/K_+$ along with the fact that $\mathscr{L}_\textup{cyc}^{\Sigma} \not\in J^2$, we conclude that there exists a non-trivial Heegner point $P \in A(F)$. The main results of \cite{YYZ} implies that $r_{\textup{an}}(\phi_F)=1$, which in turn shows that $r_{\textup{an}}(\phi)=1$ as well.   
\end{proof}
\subsection{Logarithms of Heegner points and Perrin-Riou-Stark elements}
\label{sec:CMabvarcolemanRS}
We continue with our discussion on CM abelian varieties and our aim in this subsection is to compare the Bloch-Kato logarithm of the Coleman-Rubin-Stark element $\crs$ to the square of the logarithm of a global point on our abelian variety up to a non-zero algebraic factor.  This is a generalized form of Perrin-Riou's predictions for elliptic curves (and Beilinson-Kato elements) that we revisited in Section~\ref{sec:heegner} below in the context of elliptic curves defined over $\QQ$.

Throughout this section, we assume that $r_{\textup{an}}(\phi)=1$. We also keep working with our assumptions and conventions we have set at the start of Sections~\ref{subsubsec:CMtypesabvar}, \ref{subsubsec:CRSelement}, \ref{subsubsec:PRconj} and throughout  Section~\ref{subsubsec:grossenkaraktere}. 

We fix a quadratic and purely imaginary extension $E$ of $K_+$ such that the relative discriminant $\Delta_{E/K_{+}}$ is totally odd and relatively prime to $D_{K_+}Np$ (where $D_{K_+}$ is the discriminant of $K_+/\QQ$). We let $\eta=\eta_{E/K_+}$ the quadratic character associated to $E/K_+$. Let $N$ denote the level of the normalised new Hilbert eigenform $\phi$ of weight $2$ and suppose that $\eta(N)=(-1)^{g-1}$. Let $\phi_\eta$ denote the twisted weight 2 form, and suppose that $r_{\textup{an}}(\phi_\eta)=0$. We note that there are infinitely many choices for the field $E$ simultaneously verifying all these conditions (thanks to \cite{friedberghoffstein}); we pick one. The work of Yuan-Zhang-Zhang \cite{YYZ} applies in this situation and equips us with a \emph{Heegner point} $P_\phi \in A(K_+)$. 

Also in this case, the work of Manin, Dabrowski, Dimitrov and Januscewski associates the $p$-ordinary stabilisation of $\phi$ a $p$-adic $L$-function $L_p(\phi,\cdot) \in \LL_\frak{o}$ which is characterized by the following interpolation property (c.f., Theorem 4.4.1 of \cite{disegnithesispaper}): For every non-trivial character of $\Gamma$ of finite order\footnote{Since we assumed that $K_+/\QQ$ is unramified, it follows that any non-trivial character is ramified at all all primes of $K_+$ above $p$.} and conductor $\frak{f}_\chi$
\be\label{eqn:interpolationdmitrov}
L_p(\phi,\chi)=\chi(D_{K_+})\,{\tau(\bar{\chi})\,\mathbf{N}(\frak{f}_\chi)^{1/2}}\,{\alpha_{\frak{f_\chi}}^{-1}}\cdot\frac{L(\phi,\bar{\chi},1)}{\Omega_\phi^{+}}
\ee
where 
\begin{itemize}
\item $\tau(\bar{\chi})$ is a Gauss sum that is normalized by Disegni in loc. cit.\,; 
\item $\alpha_{\frak{f_\chi}}:=\prod_{\frak{q}\mid p} a_{\frak{q}}(\phi)^{v_{\frak{q}}(\frak{f}_\chi)}$ and $a_{\frak{q}}(\phi)$ the $p$-unit root of the Hecke polynomial at $\frak{q}$\,; 
\item $\Omega_\phi^+$ is the real period defined by Shimura and Yoshida~\cite{shimurayoshida}.
\end{itemize} 
There is likewise a $p$-adic $L$-function $L_p(\phi_\eta,\cdot)$ associated to a $p$-ordinary stabilisation of the twisted form $\phi_\eta$ (and a corresponding real period $\Omega_{\phi_\eta}^+$) as well as a $p$-adic $L$-function $L_p(\phi_E,\cdot) \in \LL_\frak{o}$ attached (by Panchishkin, Hida and Disegni) to the base change $\phi_E$. The Artin formalism yields a factorization
\be
\label{eqn:factorizationofpadicL}
L_p(\phi_E,\chi\circ {N}_{E/K_{+}})=\chi(\Delta_{E/K_{+}})^2\frac{\Omega_\phi^+\Omega_{\phi_\eta}^+}{D_E^{-1/2}\Omega_\phi}L_p(\phi,\chi)L_p(\phi_\eta,\chi)
\ee
for every character $\chi$ as above, where $\Omega_\phi=(8\pi)^2\langle \phi,\phi\rangle_{N}$ is the Shimura period. We remark that the ratio ${\Omega_\phi^+\Omega_{\phi_\eta}^+}/{\Omega_\phi}$ is always an algebraic number (that in fact belongs to the Hecke field). Set
\begin{align*}
C(\phi,E,\epsilon)&:=\mathscr{E}_{\frak{f}^+}(\psi_\epsilon)\prod_{\frak{q}|p}\left(1-{1}/{a_\frak{q}(\phi)}\right)^2\, D_F^{-1}D_E^{1/2}\frac{\Omega_{\phi}}{\Omega_\infty(\epsilon)}\cdot L(\phi_\eta,1)^{-1}\\
&=\mathscr{E}_{\frak{f}^+}(\psi_\epsilon)\prod_{\frak{q}|p}\left(1-{1}/{a_\frak{q}(\phi)}\right)^2\, D_F^{-1}D_E^{1/2}\frac{\Omega_\phi}{\Omega_\phi^+\Omega_{\phi_\eta}^+}\,\frac{\Omega_\phi^+}{\Omega_\infty(\epsilon)}\, \Omega_{\phi_\eta}^+L(\phi_\eta,1)^{-1} \in \overline{\QQ}^\times\,.
\end{align*}
where $\mathscr{E}_p(\psi_\epsilon)$ is as above and is a product of certain modified root numbers at the primes above $p$ $($and they are given as in \cite[Section 2.3]{burungaledisegni}$)$.
\begin{thm} 
\label{thm:maincolemanRSheegnerPR}
${\log_{\omega}(\frak{C})}=(1-1/\alpha)^{-1}(1-\alpha/p)\cdot C(\phi,E,\epsilon)\cdot{\log_{\omega}(P_\phi)^2}\,.$
\end{thm}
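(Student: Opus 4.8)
The plan is to read the identity off the Rubin-style formula~(\ref{eqn:truerubinstyleformula}), which was established in the course of the proof of Theorem~\ref{thm:ercimpliesonlyif}, and then to pin down the algebraic constant by passing Disegni's $p$-adic Gross--Zagier formula through the factorization~(\ref{eqn:factorizationofpadicL}) of $p$-adic $L$-functions and comparing Katz's $p$-adic $L$-function with Disegni's.

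Under the running hypotheses (the Explicit Reciprocity Conjecture~\ref{conj:explicitreciprocityforRS}; the Perrin-Riou--Stark conjecture and Leopoldt's conjecture for the layers of $K(A[\varpi])/K$ needed to produce $\bbkappa^{\textup{CRS}}$ in Theorem~\ref{thm:mainRS}; and the non-vanishing of the $p$-adic height pairing of Definition~\ref{def:heightparing}) together with the present assumption $r_{\textup{an}}(\phi)=1$, we find ourselves precisely in Case~2 of the proof of Theorem~\ref{thm:ercimpliesonlyif}: the Kolyvagin-system method applied to $\bbkappa^{\textup{CRS}}$ gives $H^1_f(K_+,T_\PPP(A))=H^1_{\FF_{\Gr}}(K_+,T_\PPP(A))\cong A(K_+)\,\widehat{\otimes}\,\frak{o}$ of $\frak{o}$-rank one, $\Sha(A/K_+)[\frak{P}^\infty]$ finite, $L(1/2,\psi_\epsilon)=0$, and $\mathscr{L}_\textup{cyc}^{\Sigma}\in J\setminus J^2$, while the localization $\res_{f/-}$ (which at the degree-one prime $\frak{Q}$ is a localization map) is injective on $H^1_f(K_+,T_\PPP(A))$. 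By Theorem~\ref{thm:PRconjforRSforreal} the Coleman--Rubin--Stark class $\crs$ is non-trivial, and since $\frak{Q}$ has degree one over $\QQ$ the functional $\log_\omega$ is injective on the $L$-line $A(K_+)\otimes L$; hence $\log_\omega(\crs)\neq0$, and likewise $\log_\omega(P_\phi)\neq0$ for the Yuan--Zhang--Zhang Heegner point $P_\phi\in A(K_+)$ of \cite{YYZ}. Applied with $\crs$ in place of $\frak{S}$, formula~(\ref{eqn:truerubinstyleformula}) then reads
$$-\log_\omega(\crs)\cdot\frac{\bigl(\mathscr{L}_\textup{cyc}^{\Sigma}\bigr)^{\prime}(\mathds{1})}{\Omega_p(\epsilon)}=(1-1/\alpha)(1-\alpha/p)^{-1}\,\langle\crs,\crs\rangle\,.$$

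The decisive step is to identify $\bigl(\mathscr{L}_\textup{cyc}^{\Sigma}\bigr)^{\prime}(\mathds{1})/\Omega_p(\epsilon)$ with an explicit algebraic multiple of $\langle P_\phi,P_\phi\rangle$. First I would compare the interpolation formula of Corollary~\ref{cor:padicLfunctionforA} for the $\bbpsi$-branch of Katz's $p$-adic $L$-function with the interpolation property~(\ref{eqn:interpolationdmitrov}) of Disegni's $L_p(\phi,\cdot)$: since $\phi$ is the CM form attached to $\psi_\epsilon$, both measures on $\Gamma$ interpolate the critical values $L(\phi,\bar\chi,1)=L(1/2,(\psi_\epsilon\bar\chi)^u)$; every non-trivial cyclotomic $\chi$ is ramified above $p$ as $K_+/\QQ$ is unramified, so the Euler-like factors $1-\chi_\epsilon\psi_\epsilon(\wp)$ are trivial (Remark~\ref{rem:comparedisegnitohsiehepsilontau}), and the remaining local factors at $p$ match through the theory of complex multiplication; this yields an equality of measures $\mathscr{L}_\textup{cyc}^{\Sigma}=c_1\cdot L_p(\phi,\cdot)$ on $\Gamma$ with $c_1$ built from $\Omega_p(\epsilon)$, the period ratio $\Omega_\phi^+/\Omega_\infty(\epsilon)$, the factor $\mathscr{E}_{\frak{f}^+}(\psi_\epsilon)$, the unit-root factors $\prod_{\frak{q}\mid p}(1-1/a_{\frak{q}}(\phi))$ and Gauss sums, and hence $\bigl(\mathscr{L}_\textup{cyc}^{\Sigma}\bigr)^{\prime}(\mathds{1})=c_1\,L_p^{\prime}(\phi,\mathds{1})$. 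Next, evaluating~(\ref{eqn:factorizationofpadicL}) at $\mathds{1}$ --- where $L_p(\phi,\mathds{1})=0$ whereas $L_p(\phi_\eta,\mathds{1})\neq0$ because $r_{\textup{an}}(\phi_\eta)=0$ --- gives
$$L_p^{\prime}(\phi_E,\mathds{1})=\frac{\Omega_\phi^+\,\Omega_{\phi_\eta}^+}{D_E^{-1/2}\,\Omega_\phi}\cdot L_p^{\prime}(\phi,\mathds{1})\cdot L_p(\phi_\eta,\mathds{1})$$
up to the bounded index relating the cyclotomic variables of $E$ and $K_+$; and Disegni's $p$-adic Gross--Zagier formula~\cite{disegniGZ} for the base change $\phi_E$ --- whose hypotheses hold since $\Delta_{E/K_+}$ is totally odd and prime to $D_{K_+}Np$ and $\eta(N)=(-1)^{g-1}$ --- expresses $L_p^{\prime}(\phi_E,\mathds{1})$ as an explicit algebraic multiple of the $p$-adic height $\langle P_\phi,P_\phi\rangle$, once one checks that the height in \cite{disegniGZ} (normalized via the canonical unit-root splitting and Iwasawa's branch of the logarithm) is the pairing of Definition~\ref{def:heightparing}, as in \cite[\S11.3]{nek}. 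Chaining these identities, together with a lengthy but mechanical tracking of periods, Gauss sums, discriminants and Euler factors, produces $\bigl(\mathscr{L}_\textup{cyc}^{\Sigma}\bigr)^{\prime}(\mathds{1})/\Omega_p(\epsilon)=-\,C(\phi,E,\epsilon)\cdot\langle P_\phi,P_\phi\rangle$, where $C(\phi,E,\epsilon)$ is the constant defined just before the statement --- the square $\prod_{\frak{q}\mid p}(1-1/a_{\frak{q}}(\phi))^2$ in it coming from the modified Euler factors at $p$ in Disegni's formula.

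Finally, both $\langle\,,\,\rangle$ and $x\mapsto\log_\omega(x)^2$ are non-degenerate quadratic forms on the one-dimensional $L$-vector space $A(K_+)\otimes L$, so $\langle\crs,\crs\rangle/\log_\omega(\crs)^2=\langle P_\phi,P_\phi\rangle/\log_\omega(P_\phi)^2$. Substituting this together with $\bigl(\mathscr{L}_\textup{cyc}^{\Sigma}\bigr)^{\prime}(\mathds{1})/\Omega_p(\epsilon)=-C(\phi,E,\epsilon)\langle P_\phi,P_\phi\rangle$ into the displayed formula above and cancelling the non-zero quantity $\log_\omega(\crs)$ and the common ratio $\langle P_\phi,P_\phi\rangle/\log_\omega(P_\phi)^2$, one obtains
$$\log_\omega(\crs)=(1-1/\alpha)^{-1}(1-\alpha/p)\cdot C(\phi,E,\epsilon)\cdot\log_\omega(P_\phi)^2\,,$$
which is the assertion. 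The hard part is the third paragraph: reconciling the normalizations of the Katz--Hida--Tilouine and Disegni $p$-adic $L$-functions --- the periods $\Omega_\infty(\epsilon)$ versus $\Omega_\phi^+$, the local Euler factors at $p$, the Gauss sums and the discriminant factors --- and verifying that the height in Disegni's Gross--Zagier formula is literally the pairing of Definition~\ref{def:heightparing}; granting those, everything else is formal.
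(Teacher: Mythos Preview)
Your proposal is correct and follows essentially the same route as the paper: the Rubin-style formula~(\ref{eqn:truerubinstyleformula}), the comparison of Katz's and Disegni's $p$-adic $L$-functions (which the paper isolates as a separate Claim), the factorization~(\ref{eqn:factorizationofpadicL}), Disegni's $p$-adic Gross--Zagier formula, and the quadratic-form argument on the one-dimensional Selmer space are exactly the ingredients the paper uses, combined in the same way. One minor caveat: the comparison of $p$-adic $L$-functions is not literally an identity of measures $\mathscr{L}_\textup{cyc}^\Sigma=c_1\cdot L_p(\phi,\cdot)$ with a constant $c_1$, since the paper's Claim has the $\chi$-dependent factor $\mathscr{E}_{\frak{f}^+}(\chi_\epsilon\psi_\epsilon)\,\chi(D_{K_+})$; the paper handles this by noting that this factor varies analytically in $\chi$ and specializes to $\mathscr{E}_{\frak{f}^+}(\psi_\epsilon)$ at $\chi=\mathds{1}$, which is what one needs to differentiate at $\mathds{1}$.
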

\begin{proof}
We start observing that 
\begin{align*}
\frac{\langle P_\phi,P_\phi\rangle}{\log_{\omega}(P_\phi)^2}&=\frac{\langle \frak{C},\frak{C}\rangle}{\log_{\omega}(\frak{C})^2}=-\frac{\left(\mathscr{L}_\textup{cyc}^{\Sigma}\right)^\prime(\mathds{1})}{\Omega_p(\epsilon)\cdot \log_{\omega}(\frak{C})}\cdot(1-1/\alpha)^{-1}(1-\alpha/p)\\
&=\mathscr{E}_{\frak{f}^+}(\psi_\epsilon)\,(1-1/\alpha)^{-1}(1-\alpha/p)\cdot\frac{\Omega_\phi^+}{\Omega_{\infty}(\epsilon)}\cdot\frac{L_p^\prime(\phi,\mathds{1})}{\log_{\omega}(\frak{C})}
\end{align*}
where the first equality on the first line is because  $H^1_f(K_+,T_\frak{P}(A))$ has rank one by the proof of Theorem~\ref{thm:PRconjforRSforreal} and $\langle\cdot,\cdot\rangle/\log_{\omega}(\cdot)^2$
is a non-trivial quadratic form on this space (thanks to our assumption that the $p$-adic height pairing is non-zero); the second equality on the first line is (\ref{eqn:truerubinstyleformula}) and finally, the equality on the second line follows from the Claim below. Indeed, the factor $\mathscr{E}_{\frak{f}^+}(\chi_\epsilon\psi_\epsilon)\,\chi(D_{K_+})$ that appear in the statement of Claim varies analytically in $\chi$ and tend to $\mathscr{E}_{\frak{f}^+}(\psi_\epsilon)$ as $\chi$ tends to the trivial character $\mathds{1}$.

We therefore conclude that,
\be\label{GZZhang1}
\frac{L_p^\prime(\phi,\mathds{1})}{\langle P_\phi,P_\phi\rangle}=\mathscr{E}_{\frak{f}^+}(\psi_\epsilon)^{-1}\,(1-1/\alpha)(1-\alpha/p)^{-1}\,\frac{\Omega_{\infty}(\epsilon)}{\Omega_\phi^+}\,\frac{\log_{\omega}(\frak{C})}{\log_{\omega}(P_\phi)^2}\,.
\ee
On the other hand,
\begin{align}
\notag\frac{L_p^\prime(\phi,\mathds{1})}{\langle P_\phi,P_\phi\rangle}&=\frac{L_p^\prime(\phi_E,\mathds{1})}{\langle P_\phi,P_\phi\rangle}\cdot\frac{D_E^{1/2}\Omega_{\phi}}{\Omega_{\phi}^+}\cdot (\Omega_{\phi_\eta}^+\cdot L_p(\phi_\eta,\mathds{1}))^{-1}\\
\notag&=\frac{L_p^\prime(\phi_E,\mathds{1})}{\langle P_\phi,P_\phi\rangle}\cdot\frac{D_E^{1/2}\Omega_{\phi}}{\Omega_{\phi}^+}\cdot\left({\prod_{\frak{q}|p}\left(1-\frac{1}{\eta(\frak{q})a_\frak{q}(\phi)}\right)^2}L(\phi_\eta,1)\right)^{-1}\\
\label{GZZhang2}&=\prod_{\frak{q}|p}\left(1-{1}/{a_\frak{q}(\phi)}\right)^2D_F^{-1}D_E^{1/2}\,\frac{\Omega_{\phi}}{\Omega_{\phi}^+}\, L(\phi_\eta,1)^{-1}
\end{align}
where the first equality follows from the factorization (\ref{eqn:factorizationofpadicL}) of the base-change $p$-adic $L$-function, the second from the interpolation formula for the twisted $p$-adic $L$-function and the last from the $p$-adic Gross-Zagier formula of Disegni~\cite[Theorem B]{disegniGZ}. Using (\ref{GZZhang1})  together with (\ref{GZZhang2}), we conclude that 
$$\frac{\log_{\omega}(\frak{C})}{\log_{\omega}(P_\phi)^2}=(1-1/\alpha)^{-1}(1-\alpha/p)\,\mathscr{E}_{\frak{f}^+}(\psi_\epsilon)\prod_{\frak{q}|p}\left(1-{1}/{a_\frak{q}(\phi)}\right)^2\, D_F^{-1}D_E^{1/2}\frac{\Omega_{\phi}}{\Omega_\infty(\epsilon)}\cdot L(\phi_\eta,1)^{-1}$$
and the proof follows.
\end{proof}
\begin{claim}
For all sufficiently ramified characters $\chi$ of $\Gamma$ the following identity holds:
$$\frac{\mathscr{L}_\textup{cyc}^{\Sigma}(\chi)}{\Omega_p(\epsilon)}=\mathscr{E}_{\frak{f}^+}(\chi_\epsilon\psi_\epsilon)\,\chi(D_{K_+})\,\Omega_{\phi}^+\,L_p(\phi,\chi^{-1})\,.$$
\end{claim}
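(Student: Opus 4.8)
The plan is to establish the Claim by comparing the interpolation properties of the two $p$-adic $L$-functions involved; this is an instance of the general principle that the cyclotomic restriction of the $\bbpsi$-branch of Katz's CM $p$-adic $L$-function agrees, up to an explicit archimedean period and a product of local factors, with the $p$-adic $L$-function of the associated CM Hilbert modular form. Since the displayed identity is only required for characters $\chi$ of $\Gamma$ of sufficiently large $p$-power conductor, and such $\chi$ are in particular non-trivial and ramified at every prime of $K$ above $p$ (recall $p$ is unramified in $K/\QQ$), it suffices to verify it on that range.

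On the left, Corollary~\ref{cor:padicLfunctionforA} expresses $\mathscr{L}^\Sigma_{\textup{cyc}}(\chi)/\Omega_p(\epsilon)$ in terms of the complex value $L(1/2,\chi_\epsilon\psi_\epsilon^u)$, the CM period $\Omega_\infty(\epsilon)$, and Katz's multipliers $t\cdot\mathscr{E}_p(\chi\psi)\cdot\mathscr{E}_{\frak{f}^+}(\chi\psi)$; by Remark~\ref{rem:comparedisegnitohsiehepsilontau}, for our non-trivial $\chi$ the extra Euler-like factor $\prod_{\wp\mid p}(1-\chi_\epsilon\psi_\epsilon(\wp))$ equals $1$ and is discarded. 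On the right, the interpolation property (\ref{eqn:interpolationdmitrov}) of $L_p(\phi,\cdot)$, applied to $\chi^{-1}$ (so that $\overline{\chi^{-1}}=\chi$ and $\frak{f}_{\chi^{-1}}=\frak{f}_\chi$), expresses $L_p(\phi,\chi^{-1})$ through the complex value $L(\phi,\chi,1)$, the Shimura--Yoshida period $\Omega_\phi^+$, and the factors $\chi^{-1}(D_{K_+})$, $\tau(\chi)$, $\mathbf{N}(\frak{f}_\chi)^{1/2}$, $\alpha_{\frak{f}_\chi}^{-1}$. Substituting both sides, the Claim splits into two parts. The archimedean part is the identity $L(1/2,\chi_\epsilon\psi_\epsilon^u)=L(\phi,\chi,1)$: this is the Artin formalism expressing that $\phi$ is the Hilbert modular newform over $K_+$ obtained from the Hecke character $\psi_\epsilon$ by automorphic induction, twisted by the cyclotomic character $\chi$, the shift between the automorphic normalisation centred at $s=1/2$ and the motivic normalisation centred at $s=1$ being the one flagged in the footnote on $L$-function conventions. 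The remaining part is the matching of all other factors: the identity $\mathscr{E}_{\frak{f}^+}(\chi\psi)=\mathscr{E}_{\frak{f}^+}(\chi_\epsilon\psi_\epsilon)$ (this modified Euler product is algebraic and does not distinguish a $p$-adic Hecke character from its complex avatar), together with the identity relating $t\cdot\mathscr{E}_p(\chi\psi)$ and $\Omega_\infty(\epsilon)$ to $\chi(D_{K_+})^{-1}\,\tau(\chi)\,\mathbf{N}(\frak{f}_\chi)^{1/2}\,\alpha_{\frak{f}_\chi}^{-1}$ and $\Omega_\phi^+$.

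I expect this last matching to be the main obstacle. Each side is individually understood --- Katz's $p$-adic Euler factor and the CM periods $\Omega_\infty(\epsilon),\Omega_p(\epsilon)$ attached to a fixed N\'eron differential on $\mathcal{A}$ as in \cite{katz78} on one hand, Disegni's Gauss-sum and unit-root normalisation together with the period $\Omega_\phi^+$ of \cite{shimurayoshida} on the other --- but reconciling the two a priori independent normalisations exactly (not merely up to an element of $\overline{\QQ}^\times$) requires a computation of the local $\varepsilon$-factors and unit-root eigenvalues at the primes above $p$ and a precise period relation for the CM form $\phi$. A convenient way to supply the latter is to feed in the known factorisation (\ref{eqn:factorizationofpadicL}) of the base-change $p$-adic $L$-function $L_p(\phi_E,\cdot)$ and compare it with the parallel CM description of $L_p(\phi_E,\cdot)$ over $E$, forcing the undetermined archimedean ratio. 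Granting these inputs, the Claim follows by substitution and collecting terms, the cancellations $\chi(D_{K_+})\chi^{-1}(D_{K_+})=1$ and $\overline{\chi^{-1}}=\chi$ accounting for its precise shape.
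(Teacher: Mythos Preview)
Your overall strategy coincides with the paper's: both sides are evaluated through their respective interpolation formulas (Corollary~\ref{cor:padicLfunctionforA} and (\ref{eqn:interpolationdmitrov})), and the Claim reduces to matching the auxiliary local factors at primes above $p$. You also correctly isolate the identity $\mathscr{E}_p(\chi_\epsilon\psi_\epsilon)=\tau(\chi)\,\mathbf{N}(\frak{f}_\chi)^{1/2}\,\alpha_{\frak{f}_\chi}^{-1}$ as the crux.

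The difference is that where you stop and propose an indirect workaround, the paper computes directly. For sufficiently ramified $\chi$ the local $L$-factors vanish, so Tate's local functional equation at each $\wp\in\Sigma_p^c$ rewrites $\mathscr{E}_p(\chi_\epsilon\psi_\epsilon)$ as a product $\prod_{\wp\in\Sigma_p^c}\tau(\psi_{\epsilon,\wp}\chi_{\epsilon,\wp},\Psi_\wp)$ of local Gauss sums. Each of these is then identified with $\tau(\alpha_{\frak{P}}\chi_{\epsilon,\frak{P}},\Psi_\frak{P})$, where $\alpha_\frak{P}$ is the unramified character in the principal series description of the local component of $\phi$ at $\frak{P}=\wp\wp^c$. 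Finally, Lemma~A.1.1 of \cite{disegniGZ} supplies $\tau(\chi_{\epsilon,\frak{P}},\Psi_{\frak{P}})=a_\frak{q}(\phi)^{v_\frak{q}(\frak{f}_\chi)}\tau(\alpha_\frak{P}\chi_{\epsilon,\frak{P}},\Psi_\frak{P})$, and combining this with Disegni's normalisation $\tau(\chi)\,\mathbf{N}(\frak{f}_\chi)^{1/2}=\prod_{\frak{P}\mid p}\tau(\chi_{\epsilon,\frak{P}},\Psi_\frak{P})$ gives the required identity. The whole argument is a purely local computation at $p$, with the heavy lifting outsourced to \cite{burungaledisegni} and \cite{hsiehanticyclopadicL}.

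Your proposed detour through the factorisation (\ref{eqn:factorizationofpadicL}) of $L_p(\phi_E,\cdot)$ is both unnecessary and problematic here: that factorisation is invoked independently in the proof of Theorem~\ref{thm:maincolemanRSheegnerPR} which the Claim serves, so using it to prove the Claim risks circularity. As for your concern about the archimedean period ratio $\Omega_\infty(\epsilon)$ versus $\Omega_\phi^+$, the paper's proof simply does not engage with it; once the $L$-values and the $\mathscr{E}_{\frak{f}^+}$-factors are matched, the paper declares the reduction to the $\mathscr{E}_p$-identity immediate. (Note that when the Claim is actually applied in the proof of Theorem~\ref{thm:maincolemanRSheegnerPR}, the ratio $\Omega_\phi^+/\Omega_\infty(\epsilon)$ reappears explicitly, so any such discrepancy is carried along rather than resolved at this stage.)
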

\begin{proof}
This will follow once we match the interpolation factors in Corollary~\ref{cor:padicLfunctionforA} for $\mathscr{L}_\textup{cyc}^{\Sigma}(\chi)$ and (\ref{eqn:interpolationdmitrov}) for $L_p(\phi,\chi^{-1})$. More precisely, we would like to verify that $\mathscr{E}_{p}(\chi_\epsilon\psi_\epsilon)$ equals ${\tau({\chi})\,\mathbf{N}(\frak{f}_\chi)^{1/2}}\,{\alpha_{\frak{f_\chi}}^{-1}}$ for all sufficiently ramified characters $
\chi$. The proof of this claim is essentially contained in \cite[Section 2.3]{burungaledisegni} (although we also rely on Hsieh's exposition in \cite[Section 4.7]{hsiehanticyclopadicL}), we provide an outline here for the sake of completeness. For all such $\chi$, the local $L$-factors are trivial and by Tate's local functional equation, it follows that 
$$\mathscr{E}_{p}(\chi_\epsilon\psi_\epsilon)=\prod_{\wp\in \Sigma_p^c}\tau(\psi_{\epsilon,\wp}\chi_{\epsilon,\wp}\,, \Psi_\wp)\,,$$ 
where $\tau(\psi_{\epsilon,\wp}\chi_{\epsilon,\wp}, \Psi_\wp)$ are the Gauss sums which are (un)normalized as in \cite{disegniGZ} and $ \Psi_\wp$ are a suitably determined local additive characters. Let $\alpha_\frak{P}$ be the local character of $K_{+,\frak{P}}^\times$ such that the local constituent of $\phi$ at $\frak{P}=\wp\wp^c$ is the irreducible principal series $\pi(\alpha_\frak{P},\beta_\frak{P})$ for some other local character $\beta_\frak{P}$. Then $\tau(\psi_{\epsilon,\wp}\chi_{\epsilon,\wp}\,, \Psi_\wp)=\tau(\alpha_{\frak{P}}\chi_{\epsilon,\frak{P}}, \Psi_\frak{P})$, where we write $\chi_{\epsilon,\frak{P}}$ for the local character $\chi_{\epsilon,\wp}$ of $K_{+,\frak{P}}^\times=K_{\wp}^\times$ and we similarly define the additive character $\Psi_\frak{P}$. We therefore infer that 
$$\mathscr{E}_{p}(\chi_\epsilon\psi_\epsilon)=\prod_{\frak{P}\mid p}\tau(\alpha_{\frak{P}}\chi_{\epsilon,\frak{P}}, \Psi_\frak{P})\,.$$
The  $\tau(\chi)$ of \cite{disegnithesispaper} is precisely the normalization of $\prod_{\frak{P}\mid p}\tau(\chi_{\frak{P}},\Psi_{\frak{P}})$, so that 
$$\tau(\chi)\,\mathbf{N}(\frak{f}_\chi)^{1/2}=\prod_{\frak{P}\mid p}\tau(\chi_{\epsilon,\frak{P}},\Psi_{\frak{P}})\,.$$ Finally, as explained in the proof of Lemma A.1.1 of \cite{disegniGZ} we have 
$$\tau(\chi_{\epsilon,\frak{P}},\Psi_{\frak{P}})= a_{\frak{q}}(\phi)^{v_{\frak{q}}(\frak{f}_\chi)}\,\tau(\alpha_{\frak{P}}\chi_{\epsilon,\frak{P}}, \Psi_\frak{P})$$
and hence,
\begin{align*}{{\alpha_{\frak{f_\chi}}^{-1}}\,\tau({\chi})\,\mathbf{N}(\frak{f}_\chi)^{1/2}}&=\alpha_{\frak{f_\chi}}^{-1}\,\prod_{\frak{P}\mid p}\tau(\chi_{\epsilon,\frak{P}},\Psi_{\frak{P}})\\
&=\prod_{\frak{P}\mid p}\tau(\alpha_{\frak{P}}\chi_{\epsilon,\frak{P}}, \Psi_\frak{P})=\mathscr{E}_{p}(\chi_\epsilon\psi_\epsilon)\,.
\end{align*}
\end{proof}
\begin{rem}
\label{rem:chiportions}
Assuming that the sign of the functional equation for the Hecke character $\psi_\epsilon^*:=\psi_{\epsilon}\circ c$ equals $+1$, Burungale and Disegni proved in \cite{burungaledisegni} that the $p$-adic height pairing 
$$\langle\,,\,\rangle_\chi: H^1_f(K,T_\wp(A)\otimes\chi)\otimes H^1_f(K,T_{\wp^c}(A)\otimes\chi^{-1})\lra M$$
for almost all anticyclotomic Hecke characters $\chi$ of $K$ of finite order (where $M$ is a finite extension of $\QQ_p$ in which $\chi$ takes its values). 
\end{rem}

\subsection*{Acknowledgements}
I would to express my gratitude to Masataka Chida for carefully reading an earlier version of this manuscript and for his comments which helped me improve parts of the text, and to Daniel Disegni and Karl Rubin for very helpful correspondence. Special thanks are due to Henri Darmon for generously sharing his insights with me, for his suggestions and his constant encouragement. I also would like to thank Denis Benois, whose question on the Tate-Shafarevich groups of higher weight modular forms started the train of thoughts which lead me to the main results of this article.
{\bibliographystyle{halpha}
\bibliography{references}}
\end{document}